\documentclass[11pt]{article}


\usepackage{graphicx} 
\usepackage{amsmath}  
\usepackage{amsthm}
\usepackage{amsfonts} 
\usepackage{xcolor}
\usepackage{amssymb}
\usepackage{tikz} 
\usepackage{hyperref}

\usepackage{bm}
\usepackage{enumitem}

\usepackage{a4wide}
\usepackage[normalem]{ulem}




\newcommand{\bfX}{\mathbf{X}}
\newcommand{\bfw}{\mathbf{w}}
\newcommand{\bfY}{\mathbf{Y}}
\newcommand{\bfZ}{\mathbf{Z}}
\newcommand{\ldomega}{\mathcal{L}^d_{\Omega}}
\newcommand{\bfv}{{\bf{v}}}

\newcommand{\bfwstarX}{{\bfw^*(\bfX;{\bf{v}})}}
\newcommand{\Lag}{\mathrm{Lag}}
\newcommand{\vol}{\mathrm{vol}}

\newcommand{\bfB}{\mathbf{B}}
\newcommand{\cent}{c}
\newcommand{\vB}{(\bfv,\bfB)}
\newcommand{\volcell}{L} 
\newcommand{\setcent}{\sigma}
\newcommand{\cellcent}{\xi}
\newcommand{\cellvol}{m}

\newcommand{\tvoli}{v}
\newcommand{\tcenti}{{b}}
\newcommand{\distinct}{\mathbb{D}}

\numberwithin{equation}{section}


\newcommand{\RR}{\mathbb{R}}




\newtheorem{theorem}{Theorem}[section]
\newtheorem{lemma}[theorem]{Lemma}
\newtheorem{proposition}[theorem]{Proposition}

\newtheorem{goal}{Goal}
\theoremstyle{definition}
\newtheorem{definition}[theorem]{Definition}
\newtheorem{remark}[theorem]{Remark}
\newtheorem{example}[theorem]{Example}




\definecolor{brightblue}{rgb}{0,0,1}
\definecolor{darkpastelgreen}{rgb}{0.01, 0.75, 0.24}

\title{Inverting Laguerre tessellations: Recovering 
tessellations 
from the volumes and centroids of their cells
using optimal transport}


\author{D.~P.~Bourne\footnote{Maxwell Institute for Mathematical Sciences and Department of Mathematics, Heriot-Watt University, Edinburgh, UK}, $\;$ M.~Pearce\footnotemark[\value{footnote}], $\;$ S.~M.~Roper\footnote{School of Mathematics and Statistics, University of Glasgow, Glasgow, UK}}
\date{\today}

\begin{document}
    
\maketitle

\begin{abstract}
In this paper we study an inverse problem in convex geometry, inspired by a problem in materials science. Firstly, we consider the question of whether a Laguerre tessellation (a partition by convex polytopes) can be recovered from only the volumes and centroids of its cells. We show that this problem has a unique solution and give a constructive way of computing it using optimal transport theory and convex optimisation. Secondly, we consider the problem of fitting a Laguerre tessellation to synthetic volume and centroid data. Given some target volumes and centroids, we seek a Laguerre tessellation such that the difference between the volumes and centroids of its cells and the target volumes and centroids is minimised. For an appropriate objective function and suitable data, we prove that local minimisers of this problem can be constructed using convex optimisation. We also illustrate our results numerically.
There is great interest in the computational materials science community in fitting Laguerre tessellations to electron backscatter diffraction (EBSD) and x-ray diffraction images of polycrystalline materials. 
As an application of our results we fit a 2D Laguerre tessellation to an EBSD image of steel.
\end{abstract}

\section{Introduction}

In this paper we study an inverse problem in computational geometry using tools from optimal transport theory.
Laguerre tessellations, also known as power diagrams, are a 
generalisation of Voronoi tessellations \cite{AurenhammerKleinLee,BWY06,Okabe2000}. A Laguerre tessellation is a partition of a set into convex polytopes; see equation \eqref{eq:LaguerreTesselation} for the definition. Our goals are the following: 
\begin{itemize}
    \item Goal 1 (inverse problem): Given only the volumes and centroids (barycenters) of the cells in a Laguerre tessellation, can you recover the tessellation?  
    \item Goal 2 (fitting problem): Given a list of target volumes and centroids, can you find a Laguerre tessellation 
    such that the volumes and centroids of the Laguerre cells give the `best fit' of the target volumes and centroids?
\end{itemize}
These goals are stated precisely in Section \ref{sec:statement of problem}.

\paragraph{Motivation.}
There are numerous applications of Laguerre tessellations including cell biology \cite{BockEtAl2010,DiezFeydy2024}, domain decomposition \cite{LevyCentroidalPower}, fluid mechanics \cite{Gallouet:2022}, foam modelling \cite{BusaryevEtAl2012,Liebscher2015}, image interpolation \cite{LevySemiDiscrete2015}, mesh generation \cite{Levy2022}, optics \cite{MeyronMerigotThibert2018}, optimal location problems \cite{BourneRoper2015}, sampling \cite{MerigotSantambrogioSarrazin} and the proof of the Nandakumar and Ramana Rao conjecture \cite{BlagojevicZiegler}.

Our main motivation comes from materials science. 
There is a vast engineering literature on using 
Laguerre tessellations 
to model the microstructure of foams and metals, including \cite{FWZL04,KuhnSteinhauser08,WuZhouWangYang2010,LLLFP11,SBDWKKS2016,DepriesterKubler2019,KSSB20,BPR23}. 
Laguerre tessellations are even used in the steel industry, by Tata Steel Reseach and Development \cite{BKRS20}, and are part of standard software libraries for synthetic microstructure generation such as DREAM.3D \cite{groeber2014dream} and Neper \cite{QueyEtAl2011}.
Laguerre tessellations are also used in image processing, to compress or reconstruct 
electron backscatter diffraction (EBSD) or x-ray diffraction (XRD) images of metals \cite{SBDWKKS2016,PetrichEtAl2019} (see below for more references).
For these applications it is important to have efficient algorithms for generating Laguerre tessellations with prescribed geometrical and statistical properties, such as the volumes of the cells, their shape and their spatial distribution. In this paper we focus on controlling the volumes and the centroids of the cells, which give us control over their sizes and locations. This forms part of an ongoing research programme initiated by the authors in \cite{BKRS20} of leveraging optimal transport theory for microstructure modelling. We also believe that our results will be of interest in their own right in the computational geometry community (especially the results for Goal 
1), where there is a large literature on Laguerre diagrams including \cite{Aurenhammer87,Aurenhammer98,AurenhammerKleinLee,BWY06,BriedenGritzmann2012,Okabe2000}.

\paragraph{Previous work.}
The phrase \emph{inverting Laguerre tessellations} comes from the paper \cite{DuanEtAl}, where the authors studied the problem of recovering the \emph{generators} of a 2D normal Laguerre tessellation from the tessellation itself (from the edges and vertices). The generators are not uniquely determined by the tessellation, but they can be recovered uniquely once you have fixed the generator (seed and weight) of one interior cell and one coordinate of the seed of an adjacent cell; see \cite[Theorem 3.1]{DuanEtAl}. This non-uniqueness is also discussed in Lemma \ref{Lemma:Meyron} below. In \cite{DuanEtAl} the authors solved the same problem numerically using the cross-entropy method. 

The inverse problem that we study in Goal 1 is a little different; we seek to recover the tessellation given the volumes and centroids of the cells. In the process we recover one set of generators from the infinite family of possible generators.
Another inverse problem is to determine whether a partition by convex polytopes is a Laguerre tessellation. This is addressed in \cite{Aurenhammer87} and \cite[Theorem 3.2]{Lautensack_Zuyev_2008}. 
A discrete version of Goal 1 is studied in \cite[Theorem 2.1]{BriedenGritzmann2012}, where the authors give necessary and sufficient conditions for a discretised version of a Laguerre diagram to have cells of given (discrete) volumes and centroids.

Several variations of Goal 2 have been studied in the materials science literature, motivated by the application of 
fitting Laguerre diagrams and anisotropic Laguerre diagrams to images of metals, such as 
EBSD or XRD
images \cite{ABGLP15,SBDWKKS2016,VBWBSJ16,QR18,PetrichEtAl2019,PFWKS21,AFGK23}.
In Section \ref{sec:EBSD} we fit a Laguerre tessellation to a 2D EBSD image of steel. 
An EBSD image can be thought of 
as an assignment map, which associates each pixel in the image to a `cell' (grain) in the metal. 

The main difference between our work and these previous works is the choice of objective function. In our work the objective function measures the mismatch between the actual centroids of the Laguerre cells and the target centroids, which are the centroids of the cells in the EBSD image. The areas of the cells are fitted `exactly', up to any desired tolerance. In most of the previous works listed above, the objective function measures the number of misassigned pixels in the image (the pixels assigned to the wrong Laguerre cells). 
The objective functions in \cite{PetrichEtAl2019} and \cite[Section 4]{QR18} are closest in spirit to ours, where the authors minimise a weighted sum of the volume and centroid error.
Moreover, most of the works above use the full EBSD/XRD image (assignment map), whereas in our work and \cite{PetrichEtAl2019,QR18} only the areas and centroids of the cells in the image are used, and the rest of the data is discarded. While we do not exploit the full dataset, one advantage of our approach is that it can still be used when the EBSD/XRD image is incomplete or corrupted. Also, our approach can also be used to generate synthetic microstructures in the absence of EBSD/XRD data, which is time-consuming to obtain, especially in 3D.

Another novelty of our work is the choice of optimisation algorithm. We show how Goals 1 and 2 can be reformulated as \emph{convex} problems. 
A convex formulation of the fitting problem is also given in \cite[equation (DiLPM)]{AFGK23}, where they formulate it as linear programmming problem. However, it is very high-dimensional and hence computationally expensive; the number of unknowns is of the order of the number of pixels/voxels in the image, whereas in our case the number of unknowns is of the order of the number of Laguerre cells. Note, however, that the authors of \cite{AFGK23} also propose a high-accuracy, fast heuristic for approximating the solution of the linear programming problem. As far as we are aware, all the other approaches to the fitting problem in the engineering literature use slower, \emph{non-convex} optimisation methods, e.g., the stochastic cross-entropy method is used in \cite{PFWKS21,PetrichEtAl2019} and a gradient-free method (a generalisation of the Nelder–Mead method) is used in \cite{QR18}.

An entirely different approach is given in \cite{TefraRowenhurtsDirectParameterEstimateforGPBD}, where they propose an optimisation-free method for fitting an (anisotropic) Laguerre tessellation to an EBSD image. Like \cite{AFGK23}, their heuristic gives good results for little computational effort. In this paper we give a partial explanation for this, as a by-product of our solution of Goal 2.

\paragraph{Contributions.} Here is a summary of our main contributions:
\begin{itemize}
    \item Goal 1: We prove that a Laguerre tessellation is uniquely determined by the volumes and centroids of its cells (Theorem \ref{thm:uniqueness}) and that it can be recovered by solving an unconstrained convex optimisation problem (Theorem \ref{cor:soln to Goal 1}). In the language of machine learning, one would say that a Laguerre tessellation can be learnt from the volumes and centroids of its cells.
    We generalise these results to anisotropic Laguerre tessellations (Section \ref{sec:aniso}). 
    \item Goal 2: We prove that, under suitable assumptions on the target volumes and centroids, fitting a Laguerre tessellation to volume and centroid data is also essentially a convex optimisation problem, in the sense that local minimisers of the least-squares fitting error can be found by solving a constrained convex optimisation problem (Theorem \ref{thm:main}). The assumptions on the target volumes and centroids, however, are implicit and potentially quite restrictive, as illustrated numerically in Section \ref{sec:fitting} 
    \item Justification of a popular heuristic in the literature: A popular heuristic for approximately solving Goal 2 is to take the seeds of the Laguerre diagram to be the target centroids \cite{TefraRowenhurtsDirectParameterEstimateforGPBD}. In Remark \ref{remark:heuristic} we give a partial justification for why this heuristic works so well. 
    \item Application in computational materials science: As an application of Goal 2, we fit a Laguerre tessellation to an EBSD image of a single-phase steel, provided to us by Tata Steel Europe (Section \ref{sec:EBSD}).
    \item Python code for the numerical experiments in Section \ref{sec: numerics} can be found on GitHub: \url{https://github.com/DPBourne/Laguerre-Polycrystalline-Microstructures}
\end{itemize}

\paragraph{Methods.} Our main theoretical tool is \emph{optimal transport theory} \cite{SantambrogioBook}, in particular \emph{semi-discrete optimal transport} \cite[Section 4]{MerigotThibertOT}, which has strong links to computational geometry. In Section \ref{subsec:OT} we recall how a Laguerre tessellation with cells of prescribed volumes can be computed efficiently by solving a semi-discrete optimal transport problem; see Theorem \ref{Thm:DualOT}. The simulations in Section \ref{sec: numerics} rely on a modern numerical method for optimal transport, the damped Newton method \cite{Kitagawa_Merigot_Thibert_2019}, and a fast Python implementation using the library \emph{pysdot} \cite{pysdot}.

\paragraph{Outline of the paper.} Section \ref{sec: Prelim} includes the necessary background material on Laguerre tessellations and semi-discrete optimal transport theory. The goals of the paper are stated in Section \ref{sec:statement of problem}. Section \ref{sec:main results} contains our main theoretical results. Section \ref{subsec:H} introduces the concave function $H$, which is important throughout the rest of the paper. Goal 1 is addressed in Section \ref{sec:uniqueness} and Goal 2 is addressed in Sections \ref{sec:existence} and \ref{sec:rewriting NLS}. Section \ref{sec: numerics} contains our numerical results. In particular, we solve Goal 1 numerically in Section \ref{sec: numerics - diagram exists} and Goal 2 numerically in Section \ref{sec:fitting}. In Section \ref{sec:EBSD} we give an application of Goal 2 in computational materials science (fitting a Laguerre tessellation to an EBSD image).
Finally, in Section \ref{sec:aniso} we show how our results for Goal 1 can be generalised to anisotropic Laguerre diagrams.

\section{Background material}
\label{sec: Prelim}     
    \subsection{Laguerre tessellations}
        Let $d \in \mathbb{N}$, $d \ge 2$. Throughout this paper we take $\Omega \subset \RR^d$ to be a non-empty, convex, bounded set that is equal to the closure of its interior. In the numerical experiments in Section \ref{sec: numerics} we take $d=2$ and $\Omega$ to be a rectangular domain. A Laguerre tessellation of $\Omega$ is a collection of sets that partitions $\Omega$ into convex regions {(convex polytopes if $\Omega$ is a polytope)}. The partition is parameterised by a set of seeds and a corresponding set of weights. Define the set of distinct seeds by
    $$\distinct_{n} = \{\bfX = (x_1,...,x_n) \in (\RR^{d})^n: x_i \neq x_j \; \forall \; i,j \in \{1,\ldots,n\}, \, i \neq j\}.$$
    Given $\bfX\in\distinct_n$ and a set of weights $\bfw = (w_1,\ldots,w_n)$, $w_i \in \RR$, the $i^{\text{th}}$ \emph{Laguerre cell} generated by $(\bfX,\bfw)$ is defined by 
        \begin{equation}
    \label{eq:LaguerreTesselation}
            \Lag_i(\bfX,\mathbf{w}) = \{x \in \Omega: |x-x_i|^2 - w_i \leq |x-x_j|^2 - w_j\; \forall\, j \in \{1,\ldots,n\}\}, 
        \end{equation}
        where $|\cdot|$ denotes the standard Euclidean norm. The \emph{Laguerre tessellation} generated by $(\bfX,\bfw)$ is the collection of cells $\{\Lag_i(\bfX,\bfw)\}_{i=1}^n$. To see that the Laguerre cells are convex, note that {they can be written as an intersection of convex sets:} 
        \begin{equation}
             \label{eq:LThalfplane}
             \Lag_i(\bfX,\bfw) = 
             \Omega \cap
        \bigcap_{j \ne i} H_{ij}(\bfX,\bfw),
        \end{equation}
        where $H_{ij}$ is the half-space
\[
H_{ij}(\bfX,\bfw) = \{x \in {\mathbb{R}^d}:|x-x_i|^2 - w_i \leq |x-x_j|^2 - w_j\}.
\]
Furthermore, all the cells are convex polygons (if $d=2$) or convex polyhedra (if $d=3$), except possibly for the cells that intersect the boundary of $\Omega$.

    Definition \eqref{eq:LaguerreTesselation} can be extended to the case where the seeds are non-distinct, however the corresponding cells do not necessarily form a tessellation. If two seeds coincide their cells are either equal or one is the empty-set, depending on the values of the corresponding weights.
    
         The parameterisation of the tessellation is non-unique; observe that if we add a constant to all the weights {($w_i \mapsto w_i + c$ for all $i$)}, then the inequalities in \eqref{eq:LaguerreTesselation} are still satisfied and the cells remain unchanged. 
         Moreover, the following lemma 
         asserts that we can uniformly translate and dilate the seeds without changing the Laguerre tessellation, provided that the weights are adjusted appropriately.
        \begin{lemma}[See \cite{Meyron2019},  Proposition 6] 
        \label{Lemma:Meyron}
            Let $\bfX_1 = (x^1_1,\ldots,x^1_n) \in \distinct_{n}$ and $\bfw_1 = (w_1^1,\ldots,w_n^1) \in \RR^{n}$ be the generators of a Laguerre tessellation. Given a dilation factor $\lambda > 0$ and translation $t \in \RR^d$, define $\bfX_2 = (x^2_1,\ldots,x^2_n)$ by $x^2_i = \lambda x^1_i + t$. Let
            \begin{equation} \label{eq:Lemma2.1Wformula}
            w^2_i = \lambda{w^1_i}
            + 2 \lambda t \cdot x_i^1  +\lambda(\lambda-1)| x_i^1|^2, \quad i \in \{1,\ldots,n\}. 
            \end{equation}
            Then
            $\Lag_i(\bfX_1,\bfw_1) = \Lag_i(\bfX_2,\bfw_2)$ for all $i \in \{1,\ldots,n\}$.  
        \end{lemma}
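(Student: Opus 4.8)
The plan is to show that for every $x \in \RR^d$ and every pair $i,j \in \{1,\ldots,n\}$ the defining inequality for membership in the $i^{\text{th}}$ cell,
\[
|x - x_i|^2 - w_i \le |x - x_j|^2 - w_j,
\]
holds for the generators $(\bfX_1,\bfw_1)$ if and only if it holds for $(\bfX_2,\bfw_2)$; by \eqref{eq:LaguerreTesselation} this immediately yields $\Lag_i(\bfX_1,\bfw_1) = \Lag_i(\bfX_2,\bfw_2)$. First I would observe that the $|x|^2$ term in $|x-x_i|^2$ is common to both sides of the inequality and hence irrelevant, so it suffices to track the \emph{reduced} quantity $g_i(x) := -2\,x\cdot x_i + |x_i|^2 - w_i$; the inequality above is equivalent to $g_i(x) \le g_j(x)$, and correspondingly $H_{ij}(\bfX,\bfw) = \{x : g_i(x) \le g_j(x)\}$ in the notation of \eqref{eq:LThalfplane}.

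Next I would substitute $x_i^2 = \lambda x_i^1 + t$ and the weight formula \eqref{eq:Lemma2.1Wformula} into the reduced quantity for the second tessellation and expand $|x_i^2|^2 = \lambda^2 |x_i^1|^2 + 2\lambda\, t\cdot x_i^1 + |t|^2$. The cross terms $2\lambda\, t\cdot x_i^1$ cancel against the corresponding term in $w_i^2$, and the identity $\lambda^2 - \lambda(\lambda-1) = \lambda$ collapses the quadratic terms, leaving
\[
g_i^{(2)}(x) = \lambda\big(-2\,x\cdot x_i^1 + |x_i^1|^2 - w_i^1\big) + \big(|t|^2 - 2\,x\cdot t\big) = \lambda\, g_i^{(1)}(x) + c(x),
\]
where $c(x) := |t|^2 - 2\,x\cdot t$ does not depend on $i$. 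This is the crux of the argument: the weight formula \eqref{eq:Lemma2.1Wformula} is reverse-engineered precisely so that $g_i^{(2)}$ is an affine function of $g_i^{(1)}$ with an $i$-independent shift $c(x)$ and a \emph{positive} multiplicative constant $\lambda$.

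Finally, since $c(x)$ cancels from both sides of the inequality and $\lambda > 0$, I would conclude that $g_i^{(2)}(x) \le g_j^{(2)}(x) \iff g_i^{(1)}(x) \le g_j^{(1)}(x)$ for all $x \in \RR^d$ and all $i,j$, hence $H_{ij}(\bfX_1,\bfw_1) = H_{ij}(\bfX_2,\bfw_2)$, and intersecting with $\Omega$ gives the claimed equality of cells via \eqref{eq:LThalfplane}. I would also record the minor point that $\bfX_2 \in \distinct_n$, since $x \mapsto \lambda x + t$ is injective for $\lambda > 0$, so that $(\bfX_2,\bfw_2)$ is indeed an admissible pair of generators. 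I do not anticipate a genuine obstacle here: the proof is a short direct computation, and the only thing requiring care is the algebraic bookkeeping in the expansion — in particular verifying that the $t$-dependent terms and the $|x_i^1|^2$-dependent terms combine exactly as stated.
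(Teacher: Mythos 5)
Your proof is correct: the algebra checks out (in particular $|x_i^2|^2 - w_i^2 = \lambda\bigl(|x_i^1|^2 - w_i^1\bigr) + |t|^2$ after the cross terms cancel and $\lambda^2 - \lambda(\lambda-1) = \lambda$), and the observation that $g_i^{(2)} = \lambda g_i^{(1)} + c(x)$ with $\lambda>0$ and $c$ independent of $i$ is exactly the right way to conclude. The paper itself defers to the cited reference rather than giving a proof, but this direct verification is the standard argument and there is nothing to add.
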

        
        Given a {(Lebesgue-measurable)} set $A \subseteq \mathbb{R}^d$, we define $\vol(A)$ to be its {$d$-dimensional volume (area if $d=2$, volume if $d=3$) and $\setcent(A)$ to be its centroid:}
        \begin{align*}
            \vol(A) = \int_{A} 1\, \text{d}x, \qquad \setcent(A) = \frac{1}{\vol(A)} \int_{A} x \, \text{d}x.
        \end{align*}
        The latter is only defined for $A$ such that $\vol(A)$ is non-zero.   
        Given $(\bfX,\bfw) \in \distinct_n \times \mathbb{R}^n$, we denote the volume and centroid of the Laguerre cell $\Lag_i(\bfX,\bfw)$  by 
    \begin{equation}
    \label{eq:m xi}
            \cellvol_i(\bfX,\bfw) = \vol(\Lag_i(\bfX,\bfw)),
            \qquad \cellcent_i(\bfX,\bfw) = \setcent(\Lag_i(\bfX,\bfw)). 
        \end{equation}
       Since the Laguerre cells are disjoint (up to sets of measure zero), we have
       \begin{align}
        \label{eq:MassOmega}
            \vol(\Omega) &= \int_{\Omega} 1\, \text{d}x = \sum_{i=1}^{n} \int_{\Lag_i(\bfX,\bfw)} 1\, \text{d}x = \sum_{i=1}^{n} \cellvol_i(\bfX,\bfw),  \\
        \label{eq:MassCentsOmega}
            \vol(\Omega) \cdot \setcent(\Omega) &= \int_{\Omega} x \, \text{d}x = \sum_{i=1}^{n} \int_{\Lag_i(\bfX,\bfw)}x \, \text{d}x = 
\sum_{i=1}^n\cellvol_i(\bfX,\bfw)\cellcent_i(\bfX,\bfw).
        \end{align}
      
        There is a one-to-one correspondence between Laguerre tessellations and solutions of semi-discrete optimal transport problems, which 
        is useful for generating Laguerre tessellations with cells of prescribed volumes, as described in the next section.
        
    \subsection{Link to semi-discrete optimal transport} \label{subsec:OT}
        Define $\RR_+=(0,\infty)$ and 
        \[
        \mathbb{V}_n = \left\{\bfv = (v_1,\ldots,v_n) \in \RR_{+}^{n} \,: \, \sum_{i=1}^{n}v_i = \vol(\Omega)\right\}.
        \]
        Given $\bfX = (x_1,\ldots,x_n) \in (\RR^{d})^n$ and $v \in \mathbb{V}_n$
        , we define the  measure $\nu(\bfX,{\bfv}) \in \mathcal{M}_{+}(\mathbb{R}^d)$ by
        \[
        \nu(\bfX,{\bfv})= \sum_{i=1}^{n} \tvoli_i \delta_{x_i},
        \]
        where $\delta_{x_i}$ denotes the Dirac measure supported at $\{x_i\}$. This is well defined even if the seeds $x_i$ are not distinct.
        
        Let $\ldomega$ be the $d$-dimensional Lebesgue measure restricted to $\Omega$. Any map $T:\Omega \to \{x_1,\ldots,x_n\}$
        satisfying $\vol(T^{-1}(\{x_i\})) = \tvoli_i$ for all $i$ is called an \emph{admissible transport map}. We denote the set of admissible transport maps by $\mathcal{A}(\bfX,\bfv)$:
        \begin{equation}
        \label{eq:admissible maps}
        \mathcal{A}(\bfX,\bfv)=\{T:\Omega 
        \to \{ x_1,\ldots,x_n\}
        \; : \; \vol(T^{-1}(\{x_i\})) = \tvoli_i \; \forall \; i \in \{1,\ldots,n\}\}.
        \end{equation}
For any $T\in\mathcal{A}(\bfX,\bfv)$ we define the \emph{transport cost} $M(T)$ by
\[
M(T):= \int_{\Omega} |x-T(x)|^2\,\mathrm{d}x = \sum_{i=1}^n \int_{T^{-1}(\{ x_i \})} |x-x_i|^2 \, \mathrm{d}x.
\]
The \emph{optimal} cost of transporting $\ldomega$ to $\nu(\bfX,\bfv)$ is 
        \begin{equation}
        \label{def:OTproblem} 
            \mathcal{T}(\bfX,\bfv)
            =\inf_{T\in\mathcal{A}(\bfX,\bfv)} M(T).
       \end{equation}
       Any minimiser $T^*$ in \eqref{def:OTproblem} is called an \emph{optimal transport map}, and 
       \begin{equation}
\label{eq:W2}
W_2(\mathcal{L}^d_\Omega,\nu(\bfX,\bfv)):=\mathcal{T}(\bfX,\bfv)^{1/2}
\end{equation}
is the \emph{Wasserstein distance} between the measures $\mathcal{L}^d_\Omega$ and  $\nu(\bfX,\bfw)$.  See for example \cite{SantambrogioBook}

       It is well-known, see for instance \cite{MerigotThibertOT},  that there exists an optimal transport map $T^*$, it is unique (almost everywhere), it is the gradient of a convex function, and it has the following form: 
       There exists $\bfw = (w_1,\ldots,w_n) \in \mathbb{R}^n$ such that
\begin{equation}
\label{def:OTmap}
T^*(x) = x_i \quad \textrm{if } 
x \in \operatorname{int}\Lag_i(\bfX,\bfw),
\end{equation}
where $\operatorname{int}$ denotes the interior. 
This defines $T^*$ uniquely almost everywhere because the boundaries of the cells form a set of measure zero.
In particular, 
$(T^*)^{-1}(\{ x_i \})$ is the Laguerre cell $\Lag_i(\bfX,\bfw)$ (up to a set of measure zero) with $\vol(\Lag_i(\bfX,\bfw))=v_i$ for all $i$.

The following theorem gives us a method to find the  weights $\bfw$ and consequently the optimal map \eqref{def:OTmap}. It goes back at least as far as \cite{Aurenhammer98} and is well-known in the optimal transport community.
        \begin{theorem}
    [See \cite{MerigotThibertOT}, Theorem 40 \& Proposition 37]
    \label{Thm:DualOT}
    Given $\bfX \in \distinct_n$ and $\bfv \in \mathbb{V}_n$, define the dual function $\mathcal{K}:\mathbb{R}^n \to \mathbb{R}$ by
            \[
            \mathcal{K}(\bfw) = 
            \int_\Omega \min_i (|x-x_i|^2 - w_i) \,\mathrm{d}x + \sum_{i=1}^n w_i\tvoli_i =
            \sum_{i=1}^{n} \int_{\Lag_i(\bfX,\bfw)} (|x-x_i|^2 - w_i) \,\mathrm{d}x + \sum_{i=1}^n w_i\tvoli_i.
            \]
            Then $\mathcal{K} \in C^1(\mathbb{R}^n)$, it is concave, and
            its gradient is given by
            \[
            \frac{\partial \mathcal{K}}{\partial w_i} (\bfw) = \tvoli_i - \cellvol_i(\bfX,\bfw) \quad \forall \, i \in \{1,\ldots,n\}.
            \]
            Moreover 
            \[
            \min_{T \in \mathcal{A}(\bfX,\bfv)}\mathcal{M}(T) = \max_{\bfw \in \mathbb{R}^{n}} \mathcal{K}(\bfw).
            \]
            In particular, the maximum of $\mathcal{K}$ is achieved.
            Let $w^* \in \mathbb{R}^n$ be a maximiser of $\mathcal{K}$. Then
$T_{\bfw^*}:\Omega \to \{ x_1,\ldots,x_n\}$ defined almost everywhere by
 \[
 T_{\bfw^*}(x) := x_i \quad \textrm{if } 
x \in \operatorname{int}\Lag_i(\bfX,\bfw^*)
\]
 is the optimal transport map for \eqref{def:OTproblem}. Furthermore,
            \[
            \vol(\Lag_i(\bfX,\bfw^*)) = v_i \quad \forall \, i \in \{1,\ldots,n\}.
            \]
        \end{theorem}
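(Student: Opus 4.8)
The plan is to obtain Theorem~\ref{Thm:DualOT} from Kantorovich duality together with a direct analysis of the function $\mathcal{K}$, the technical core being the smoothness of $\mathcal{K}$ and the existence of a maximiser. I would start by recognising $\mathcal{K}$ as a reparameterisation of the Kantorovich dual functional for the cost $c(x,y)=|x-y|^2$, the source $\ldomega$, and the discrete target $\nu(\bfX,\bfv)=\sum_i v_i\delta_{x_i}$. The Kantorovich dual reads $\sup\{\int_\Omega\varphi\,\de x+\sum_i\psi_i v_i:\ \varphi(x)+\psi_i\le|x-x_i|^2\}$, and for fixed $\psi=\bfw$ the pointwise-optimal $\varphi$ is the $c$-transform $\varphi(x)=\min_i(|x-x_i|^2-w_i)$; substituting this gives exactly $\mathcal{K}(\bfw)$. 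Hence $\sup_{\bfw}\mathcal{K}(\bfw)$ equals the Kantorovich dual value, which equals the primal value $\mathcal{T}(\bfX,\bfv)=\inf_T M(T)$ by strong duality (valid because $\Omega$ is bounded, so $c$ is bounded and continuous on $\overline\Omega\times\{x_1,\dots,x_n\}$, and because $\ldomega$ is non-atomic the Monge and Kantorovich values coincide, and the optimal plan is induced by a map, unique $\ldomega$-a.e., by Brenier's theorem). Concavity of $\mathcal{K}$ is immediate: $\bfw\mapsto\min_i(|x-x_i|^2-w_i)$ is a minimum of affine functions, integration preserves concavity, and $\sum_i w_i v_i$ is linear.

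The technical heart is the $C^1$ regularity and the gradient formula, and this is where the distinctness of the seeds enters. For $i\ne j$ the set $\{x:|x-x_i|^2-w_i=|x-x_j|^2-w_j\}$ lies, after cancelling the quadratic term, in an affine hyperplane (non-degenerate because $x_i\ne x_j$), hence is Lebesgue-null; consequently for a.e.\ $x\in\Omega$ the minimum defining the integrand is attained at a unique index, and near such $x$ the integrand is the affine map $w\mapsto|x-x_i|^2-w_i$. Since on the bounded set $\Omega$ the integrand is Lipschitz in $\bfw$ uniformly in $x$, the difference quotients in $w_i$ are bounded, so dominated convergence allows differentiation under the integral sign, giving $\partial_{w_i}\int_\Omega\min_j(\cdots)\,\de x=-\int_\Omega\mathbf 1_{\Lag_i(\bfX,\bfw)}(x)\,\de x=-\vol(\Lag_i(\bfX,\bfw))$, whence $\partial_{w_i}\mathcal{K}(\bfw)=v_i-\vol(\Lag_i(\bfX,\bfw))$. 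Continuity of $\bfw\mapsto\vol(\Lag_i(\bfX,\bfw))$ follows similarly: as $\bfw$ is perturbed the indicator $\mathbf 1_{\Lag_i(\bfX,\bfw)}$ converges pointwise off the null cell boundaries, and dominated convergence applies. Hence $\mathcal{K}\in C^1(\mathbb R^n)$.

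Next I would show the maximum is attained. The constraint $\sum_i v_i=\vol(\Omega)$ gives $\mathcal{K}(\bfw+c\mathbf 1)=\mathcal{K}(\bfw)$, so $\mathcal{K}$ descends to the hyperplane $\{\bfw:\sum_i w_i=0\}$, on which I claim it is coercive. Writing $R$ for a bound on $|x-x_i|$ over $x\in\Omega$ and $v_{\min}=\min_i v_i>0$, the elementary estimate $\min_i(|x-x_i|^2-w_i)\le R^2-\max_i w_i$ yields $\mathcal{K}(\bfw)\le\vol(\Omega)R^2+\sum_i v_i(w_i-\max_j w_j)\le\vol(\Omega)R^2-v_{\min}\bigl(\max_i w_i-\min_i w_i\bigr)$, and on $\{\sum_i w_i=0\}$ the quantity $\max_i w_i-\min_i w_i$ is a norm. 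Thus $\mathcal{K}(\bfw)\to-\infty$ as $|\bfw|\to\infty$ within this hyperplane, so a maximiser $\bfw^*$ exists there (and hence on all of $\mathbb R^n$ by translation-invariance). Combined with the first paragraph, $\max_{\bfw}\mathcal{K}(\bfw)=\inf_T M(T)$.

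Finally, at the maximiser $\bfw^*$ the gradient vanishes, which is precisely $\vol(\Lag_i(\bfX,\bfw^*))=v_i$ for all $i$; since the Laguerre cells partition $\Omega$ up to a null set, the map $T_{\bfw^*}(x)=x_i$ on $\operatorname{int}\Lag_i(\bfX,\bfw^*)$ is well defined a.e.\ and belongs to $\mathcal{A}(\bfX,\bfv)$. Its cost is $M(T_{\bfw^*})=\sum_i\int_{\Lag_i(\bfX,\bfw^*)}|x-x_i|^2\,\de x$, and substituting $\vol(\Lag_i(\bfX,\bfw^*))=v_i$ into the definition of $\mathcal{K}$ shows $M(T_{\bfw^*})=\mathcal{K}(\bfw^*)=\max_{\bfw}\mathcal{K}(\bfw)=\inf_T M(T)$; hence $T_{\bfw^*}$ is optimal (and a.e.-unique by Brenier). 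I expect the only genuinely delicate points to be the measure-zero arguments underlying the $C^1$ claim and the coercivity estimate on $\{\sum_i w_i=0\}$; the rest is bookkeeping.
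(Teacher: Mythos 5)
This theorem is not proved in the paper; it is imported verbatim from the cited reference (M\'erigot--Thibert, Proposition 37 and Theorem 40), so there is no in-paper argument to compare against. Your sketch is a correct and essentially standard proof of that background result, and it handles the genuinely delicate points properly: concavity via the min of affine functions, the $C^1$ claim via the null bisector hyperplanes (which is exactly where $\bfX\in\distinct_n$ is used) plus a uniform Lipschitz bound and dominated convergence, coercivity of $\mathcal{K}$ on the hyperplane $\{\sum_i w_i=0\}$ using $v_{\min}>0$, and the closing chain $M(T_{\bfw^*})=\mathcal{K}(\bfw^*)=\max\mathcal{K}=\inf_T M(T)$. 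One small remark: you invoke full Kantorovich strong duality and Brenier's theorem, but for everything except the a.e.-uniqueness of the optimal map you could get away with the elementary weak-duality inequality $M(T)\ge\mathcal{K}(\bfw)$ (valid for every admissible $T$ and every $\bfw$, by bounding $|x-x_i|^2\ge\min_j(|x-x_j|^2-w_j)+w_i$ on $T^{-1}(\{x_i\})$ and summing), which your final paragraph then saturates; this is the Aurenhammer-style route and is marginally more self-contained.
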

        In summary,  $\bfw^*$ maximises $\mathcal{K}$ if and only if the Laguerre cells $\Lag_i(\bfX,\bfw^*)$
        have the desired volumes $v_i$. The optimal weight vector, $\bfw^*$,  is not unique since we can add a constant vector to $\bfw^*$ and the cells are unchanged. However, if we fix a value for one of the weights we have uniqueness. Usually we chose $w_n = 0$ since this is easy to implement computationally.
        From now on, we let  $\bfwstarX$ denote the unique optimal weight vector with $w_n^*(\bfX;\bfv)=0$. By definition (see \eqref{eq:m xi}), 
        \[
\cellvol_i(\bfX,\bfwstarX) = \tvoli_i 
\quad \forall \, i \in \{1,\ldots,n\}.
\]
We define $\volcell_i(\bfX; \bfv)$
to be the $i^{\text{th}}$ Laguerre cell in the tessellation with seeds $\bfX$ and cell volumes $\bfv$, and $\cent_i(\bfX;\bfv)$ to be its centroid:
\begin{align} 
\label{defn:LaguerreWstar}
\volcell_i(\bfX; \bfv) & = \Lag_i(\bfX,\bfwstarX),  
\\          
\label{defn:CentWstar}
\cent_i(\bfX;\bfv) & = \cellcent_i(\bfX;\bfwstarX).  
\end{align}
 In this notation, by Theorem \ref{Thm:DualOT},
\begin{equation}
\label{eq:W2 in terms of Laguerre}
W_2^2(\mathcal{L}^d_\Omega,\nu(\bfX,\bfv)) = \sum_{i=1}^n \int_{L_i (\bfX; \bfv)} |x-x_i|^2 \, \text{d}x.
\end{equation}
        
        The state-of-the-art method for maximising $\mathcal{K}$ is the damped-Newton method, which can be found in \cite{Kitagawa_Merigot_Thibert_2019} and \cite[Algorithm 4]{MerigotThibertOT}. This is implemented for example in the software libraries geogram \cite{Geogram}, MATLAB-SDOT \cite{MATLAB-SDOT} and pysdot \cite{pysdot}. We use pysdot for the simulations in Section \ref{sec: numerics}.

\section{Statement of the problems}
\label{sec:statement of problem}
Our first goal is to invert a Laguerre tessellation in the following sense. We wish to recover a Laguerre tessellation only from the knowledge of the volumes and centroids of its cells. To be precise, we consider the following inverse problem:

\begin{goal}[Inverting a Laguerre tessellation]
\label{goal: recovering}
Let $\bfX \in \distinct_n$ and $\bfw \in \mathbb{R}^n$. For $i \in \{1,\ldots,n\}$, define $v_i=
\cellvol_i(\bfX,\bfw)$ and $b_i=\cellcent_i(\bfX,\bfw)$ to be the volume and centroid of the Laguerre cell $\Lag_i(\bfX,\bfw)$. The goal is to recover 
the Laguerre cells $\Lag_i(\bfX,\bfw)$
given only $\bfv=(v_1,\ldots,v_n)$ and $\bfB=(b_1,\ldots,b_n)$.
\end{goal}
We prove that this problem has a unique solution in Theorem \ref{thm:uniqueness}, in the sense that the Laguerre cells $\Lag_i(\bfX,\bfw)$ are unique. The generators $(\bfX,\bfw)$ 
are not unique because (by Lemma \ref{Lemma:Meyron})  
the seeds can be uniformly translated and dilated and a constant can be added to the weights without changing the Laguerre cells. 
 We also give a constructive method for recovering
 $\Lag_i(\bfX,\bfw)$ from
$\vB$ by solving an unconstrained convex optimisation problem; see Theorem \ref{cor:soln to Goal 1}. This is illustrated numerically in Section \ref{sec: numerics - diagram exists}. We consider the generalisation of inverting \emph{anisotropic} Laguerre tessellations in Section \ref{sec:aniso}.

\smallskip

 
Our second goal is, given some target volumes and centroids (not coming from a Laguerre tessellation), find a Laguerre tessellation that best fits this data. To state the problem precisely we need some more notation.   
        Let $\mathcal{D}_n$ denote the set of  target volumes and centroids (or barycentres):
        \begin{multline*}
    \mathcal{D}_n=\left\{ (\bfv,\bfB) = ((v_1,\ldots,v_n),(b_1,\ldots,b_n))\in \mathbb{V}_n \times \Omega^n \, : \phantom{\sum_{i=1}^n} \right. \\
    \left.
    \sum_{i=1}^n v_i = \text{vol}(\Omega), \quad 
    \sum_{i=1}^n v_i b_i = \text{vol}(\Omega) \sigma(\Omega) \right\}.
    \end{multline*}
    The constraints on $\vB$ stem from \eqref{eq:MassOmega} and \eqref{eq:MassCentsOmega}.
They are necessary for there to exist a Laguerre tessellation with cells with volumes $v_i$ and centroids $b_i$.
    We refer to the elements $\vB \in \mathcal{D}_n$ as \emph{compatible data} for the domain $\Omega$.

    Given a domain $\Omega$ and a compatible set of target volumes and centroids 
        $\vB \in \mathcal{D}_n$, the goal is to find a Laguerre tessellation with cells of volume $v_i$ and minimal centroid error. To be precise, 
        we consider the following data-fitting problem:
\begin{goal}[Fitting a Laguerre tessellation] 
\label{goal: fitting}
Let $(\mathbf{v},\mathbf{B}) \in \mathcal{D}_n$. The goal is to solve the nonlinear least squares problem
        \begin{equation}
        \label{eq:NLS}
            \tag{NLS}
     \inf_{\bfX \in \mathbb{D}_n}  
f(\bfX;\vB)
           \end{equation}
           where $f:\mathbb{D}_n\to\mathbb{R}$ is defined by
           \begin{equation}
           \label{eq:f}
f(\bfX;\vB) = \sum_{i=1}^{n}\tvoli_i^2|\cent_i(\bfX; {\bf{v}}) - {{\tcenti_i}}|^2.
           \end{equation}
        \end{goal}

Note that the objective function can be expressed in terms of first moments of the Laguerre cells:
\[
f(\bfX;\vB)
= \sum_{i=1}^n \left|
\int_{L_i(\bfX;\bfv)} x \, \text{d} x  - v_i b_i
\right|^2.
\]
We choose to fit the zeroth moments (the volumes) exactly and to approximately fit the first moments. In other words, we choose the weights $\bfw$ so that the Laguerre cells have the desired volumes, and we choose the seeds $\bfX$ to minimise the centroid error. Alternatively, instead of fitting the volumes exactly, one could minimise a weighted sum of the volume and centroid error over $(\bfX,\bfw)$, as in \cite{PetrichEtAl2019}.
One advantage of our approach is that, for some data $\vB$, \eqref{eq:NLS} turns out to be essentially equivalent to 
a convex optimisation problem, as we will see in Section \ref{sec:rewriting NLS}.
One disadvantage of our approach is that the set $\distinct_n$ is not compact and so the optimisation problem \eqref{eq:NLS} may not have a solution; the infimum may not be attained. For some data $\vB$, infimising sequences $(\bfX_k)$ in $\distinct_n$ converge to a point in $\mathbb{R}^{nd} \setminus \distinct_n$ with non-distinct seeds. We illustrate this point numerically in Section \ref{sec:fitting}. We give a practical example of fitting a Laguerre tessellation to data coming from an EBSD image of steel in Section \ref{sec:EBSD}.

We also give necessary (but not sufficient) conditions on the data $\vB$ for there to exist $\bfX \in \distinct_n$ such that $f(\bfX;\vB) = 0$, that is, for there to exist a Laguerre diagram with cells of volumes $\bfv$ and centroids $\bfB$; see Section \ref{sec:existence}.

\section{Main results}
\label{sec:main results}

\subsection{The helper function $H$}
\label{subsec:H}

We introduce a concave function $H$ with the property that $f=|\nabla H|^2$. We will use $H$ to construct solutions to Goals 1 and 2.
Given $\vB \in \mathcal{D}_n$,    define $H:\RR^{nd} \rightarrow \RR$ by
    \begin{equation}
    \label{eq:H}
    H(\bfX) = H(\bfX;\vB) = F(\bfX;\bfv) - \frac{1}{2}\sum_{i=1}^{n} v_i |x_i|^2 + \sum_{i=1}^n v_i x_i \cdot b_i - \frac{1}{2}\int_{\Omega} |x|^2 \, \text{d}x,
    \end{equation}
    where $F:\mathbb{R}^{nd} \to \mathbb{R}$ is defined by
    \begin{equation}
    \label{eq:F}
        F(\bfX) = F(\bfX;\bfv) = 
        {\frac 12}
W_2^2(\ldomega,\nu(\bfX,\bfv)),
     \end{equation}
    and $W_2$ was defined in \eqref{eq:W2}. Note that if the triple $(\bfX,\bfv,\bfB)$ is such that $b_i$ lies in the Laguerre cell $L_i(\bfX;\bfv)$ for all $i$, then $H$ can be written as the difference of two transport costs up to a constant:
\[
H(\bfX)={\frac 12}W_2^2(\ldomega,\nu(\bfX,\bfv))-\frac 12 W_2^2\left(\nu(\bfB,\bfv),\nu(\bfX,\bfv)\right)-\frac{1}{2}\int_\Omega |x|^2\, \text{d}x+\frac{1}{2}\sum_{i=1}^n v_i|b_i|^2.
\]

First we recall a fact about $F$.

\begin{proposition}[Derivative of $W_2^2$; cf.~\cite{MerigotSantambrogioSarrazin}, Proposition 1] 
\label{Thm:MSS}
The function $F:\mathbb{R}^{nd} \to \mathbb{R}$ is semi-concave in the sense that the function 
$\bfX \mapsto F(\bfX) -  {\frac 12} \sum_{i=1}^n v_i | x_i|^2 $
is concave. Moreover, $F$ is continuously differentiable on the set of distinct seeds, $F \in C^{1}(\distinct_n)$, and
        \[
        \frac{\partial F}{\partial x_i}(\bfX) = v_i( x_i - \cent_i(\bfX;\bfv)) \quad \forall \; i \in \{1,\ldots,n\}. 
        \]
        \end{proposition}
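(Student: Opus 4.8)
The plan is to read both assertions off the optimal-transport duality recalled in Theorem~\ref{Thm:DualOT}, in the spirit of \cite{MerigotSantambrogioSarrazin}; write $2F(\bfX)=W_2^2(\ldomega,\nu(\bfX,\bfv))$ throughout. For the \emph{semi-concavity} I would use the primal (Monge--Kantorovich) description. For any ordered Lebesgue partition $(A_1,\dots,A_n)$ of $\Omega$ with $\vol(A_i)=v_i$ for all $i$ --- such partitions exist and the collection of them does not depend on $\bfX$ --- the cost $\sum_i\int_{A_i}|x-x_i|^2\,\mathrm{d}x$ of the associated transport plan equals, after expanding the square,
\[
\int_\Omega |x|^2\,\mathrm{d}x \;-\; 2\sum_{i=1}^n x_i\cdot\!\int_{A_i}\!x\,\mathrm{d}x \;+\; \sum_{i=1}^n v_i|x_i|^2 .
\]
Since $2F(\bfX)$ is the infimum of these costs over all such partitions (the semi-discrete optimal transport problem of Section~\ref{subsec:OT}, whose optimal plan is induced by a partition because $\ldomega$ is absolutely continuous), the map $\bfX\mapsto 2F(\bfX)-\sum_i v_i|x_i|^2$ is a pointwise infimum of affine functions of $\bfX$, hence concave on $\RR^{nd}$; equivalently $\bfX\mapsto F(\bfX)-\tfrac12\sum_i v_i|x_i|^2$ is concave. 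In particular $F$ is locally Lipschitz and Lebesgue-a.e.\ differentiable.

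Next I would identify the gradient at a.e.\ point via the dual. By Theorem~\ref{Thm:DualOT}, $2F(\bfX)=\max_{\bfw}\mathcal{K}_{\bfX}(\bfw)$ with $\mathcal{K}_{\bfX}(\bfw)=\int_\Omega\min_i(|x-x_i|^2-w_i)\,\mathrm{d}x+\sum_i w_i v_i$. For fixed $\bfw$, differentiating under the integral sign gives $\partial \mathcal{K}_{\bfX}/\partial x_i\,(\bfw)=\int_{\Lag_i(\bfX,\bfw)}2(x_i-x)\,\mathrm{d}x$, which is jointly continuous in $(\bfX,\bfw)$; the differentiation is legitimate because the integrand is a minimum of finitely many smooth functions of $\bfX$ with $\bfX$-gradients bounded locally uniformly in $x\in\Omega$, and the set of $x$ where the minimum is attained by two indices is Lebesgue-null. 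Now fix a differentiability point $\bfX$ of $F$ and any optimal weight $\bfw^*=\bfwstarX$. Moving only the $i$-th seed to $x_i+t e$ and comparing $2F(\bfX')\ge\mathcal{K}_{\bfX'}(\bfw^*)$ with the equality $2F(\bfX)=\mathcal{K}_{\bfX}(\bfw^*)$, then letting $t\to0^{\pm}$, squeezes the directional derivative and forces
\[
\frac{\partial(2F)}{\partial x_i}(\bfX)=\frac{\partial\mathcal{K}_{\bfX}}{\partial x_i}(\bfw^*)=2\Big(v_i x_i-\int_{\volcell_i(\bfX;\bfv)}x\,\mathrm{d}x\Big)=2v_i\big(x_i-\cent_i(\bfX;\bfv)\big),
\]
using $\vol(\volcell_i(\bfX;\bfv))=v_i$; this is the stated formula up to the factor $2$, and it does not depend on the choice of optimal $\bfw^*$.

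It then remains to upgrade this a.e.\ identity to $C^1$ regularity on $\distinct_n$, for which it suffices that $\bfX\mapsto v_i(x_i-\cent_i(\bfX;\bfv))$ be continuous there, i.e.\ that the Laguerre-cell centroids depend continuously on the seeds. This is the stability of semi-discrete optimal transport: for $\bfX\in\distinct_n$ (and $\bfv\in\mathbb{V}_n$, so all cells have positive volume and $\Omega$ is connected) the optimal weights are unique up to an additive constant, so $\bfwstarX$ is the unique maximiser of $\mathcal{K}_\bfX$ on $\{w_n=0\}$ and depends continuously on $\bfX$ (Danskin/Berge applied to the jointly continuous, concave $\mathcal{K}_\bfX$, using a local a priori bound on the optimal weights); the cells $\volcell_i(\bfX;\bfv)$ therefore vary continuously in measure, and hence so do their volumes and first moments (cf.\ \cite{MerigotThibertOT,MerigotSantambrogioSarrazin}). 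Granting this, the a.e.-gradient of the concave function $\bfX\mapsto 2F(\bfX)-\sum_i v_i|x_i|^2$ admits a continuous extension over the open set $\distinct_n$, and a finite concave function whose gradient extends continuously over an open set is $C^1$ there --- its superdifferential, the convex hull of limits of nearby gradients, collapses to a single point --- giving $F\in C^1(\distinct_n)$ with $\partial F/\partial x_i=v_i(x_i-\cent_i(\bfX;\bfv))$.

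The two steps that need genuine care, rather than bookkeeping, are the rigorous differentiation under the integral sign defining $\mathcal{K}_\bfX$ (the boundary-layer estimate near the quadric interfaces of adjacent cells) and the continuity of the centroid map, i.e.\ the stability of semi-discrete optimal transport; both are by now standard, and the whole statement is essentially Proposition~1 of \cite{MerigotSantambrogioSarrazin}, so I would largely follow that argument.
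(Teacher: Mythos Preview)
Your proposal is correct and carefully argued. The paper does not actually prove this proposition: it is stated with a citation to \cite{MerigotSantambrogioSarrazin}, Proposition~1, and no proof is given. Your sketch --- semi-concavity from the primal formulation as an infimum of affine functions, the gradient formula from the dual via an envelope-theorem argument, and $C^1$ regularity from the continuity of the centroid map --- is exactly the standard argument in the cited reference, as you yourself note at the end.
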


The previous result is crucial in proving the following. 

\begin{theorem}[Properties of $H$]
\label{thm:properties H}
Let  $\vB \in \mathcal{D}_n$ and
 $H:\mathbb{R}^{nd} \to \mathbb{R}$ be the function defined in equation \eqref{eq:H}.
\begin{enumerate}[label=(\roman*)]
\item \label{Thrm4.2H:cents} If $\bfX \in \mathbb{D}_n$, then
\begin{equation}
\label{eq:Hcentroids}
    H(\bfX) = \sum_{i=1}^{n} v_i (\tcenti_i - \cent_i(\bfX;\bfv)) \cdot x_i.
    \end{equation}
In particular, if $\bfX \in \mathbb{D}_n$ generates a Laguerre tessellation with cells of volume $v_i$ and centroids $b_i$, then $H(\bfX)=0$.
\item \label{Thrm4.2H(t)=0} Let ${x_1} \in \mathbb{R}^{d}$ and ${\bfX = (x_1,\ldots,x_1)} \in \mathbb{R}^{nd}$. Then $H({\bfX})=0$.
\item \label{Thrm4.2Hconcave} $H$ is concave.
\item \label{Thrm4.2:HisC1} $H$ is continuously differentiable on the set of distinct seeds, $H \in C^1(\mathbb{D}_n)$, and for all $\bfX \in \mathbb{D}_n$ the gradient of $H$ is given by
\[
\frac{\partial H}{\partial x_i}(\bfX) = v_i(b_i - c_i(\bfX;\bfv)) \quad \forall \; i \in \{1,\ldots,n\}.
\]
In particular, $\nabla H(\bfX) = 0$ if and only if 
$\bfX$ generates a Laguerre tessellation with cells of volume $v_i$ and centroids $b_i$.
Moreover, the objective function $f$ defined in \eqref{eq:f} is related to $H$ by
\begin{equation}
\label{eq:f=|grad H|^2}
f(\bfX) = | \nabla H(\bfX) |^2.
\end{equation}
\item \label{Thrm4.2AffineTrans} Let $\bfX \in \mathbb{R}^{nd}$, $\lambda \ge 0$, $t \in \mathbb{R}^d$, $\mathbf{T}=(t,\ldots,t) \in \mathbb{R}^{nd}$. Then
\[
H(\lambda \bfX + \mathbf{T}) = \lambda H(\bfX).
\]
In particular, $H$ is 1-positively homogeneous and, for all $\bfX \in \distinct_n$,
\begin{equation}
\label{eq:invariance grad H}
\nabla H(\lambda \bfX + \mathbf{T})
= \nabla H(\bfX). 
\end{equation}
Moreover, for all $\lambda \in \mathbb{R}$ and $\bfX \in \mathbb{R}^{nd}$,
\[
H(\lambda \bfX) \le \lambda
H(\bfX)
\]
with equality if $\lambda \ge 0$.
\item \label{Thrm4.2Hsuperlinear} $H$ is superlinear, that is, for all $\bfX,\bfY \in \mathbb{R}^{nd}$,
\[
H(\bfX + \bfY) \ge H(\bfX) + H(\bfY).
\]
\item \label{Thrm4.2C=BthenGradH=0} If there exists $\bfX^* \in \distinct_n$ such that $c_i(\bfX^*;\bfv) = b_i$ for all $i \in \{1,\ldots,n\}$, then
\[
\max_{\bfX \in \mathbb{R}^{nd}} H(\bfX) = H(\bfX^*) = 0.
\]
{If not, then either 
\[
\sup_{\bfX \in \mathbb{R}^{nd}} H(\bfX)  = + \infty
\]
or
\[
\max_{\bfX \in \mathbb{R}^{nd}} H(\bfX)=0 = H((x_1,\ldots,x_1))
\]
for any $x_1 \in \mathbb{R}^d$.}

\item  
\label{Thrm4.2nodiagram}
Let $K \subset \mathbb{R}^{nd}$ be compact. If there does not exist a Laguerre tessellation with cells of volume $v_i$ and centroids $b_i$, then 
either the maximum of $H$ over $K$ is achieved on $\partial K$ or it is achieved on the set of non-distinct seeds $K \setminus \distinct_n$.
\end{enumerate}
\end{theorem}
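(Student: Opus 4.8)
The plan is to combine three facts already established: $H$ is concave (item (iii)), hence continuous on all of $\mathbb{R}^{nd}$; $H$ is $C^1$ on the open set $\distinct_n$ with gradient $\partial H/\partial x_i(\bfX) = v_i(b_i - c_i(\bfX;\bfv))$ (item (iv)); and $\nabla H(\bfX) = 0$ precisely when $\bfX$ generates a Laguerre tessellation with cells of volume $v_i$ and centroids $b_i$ (item (iv) again). Since $K$ is compact and $H$ is continuous, $H$ attains its maximum over $K$ at some point $\bfX^* \in K$. I would then show, by contradiction, that any such maximizer must lie in $\partial K \cup (K \setminus \distinct_n)$, which is exactly the asserted dichotomy.

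So suppose $\bfX^*$ is a maximizer of $H$ over $K$ with $\bfX^* \notin \partial K \cup (K \setminus \distinct_n)$; then $\bfX^* \in \operatorname{int}(K) \cap \distinct_n$. Because $\bfX^* \in \operatorname{int}(K)$, there is an open ball $B \subset K$ centred at $\bfX^*$, and on $B$ we have $H(\bfX) \le H(\bfX^*)$, so $\bfX^*$ is a local maximizer of the unconstrained function $H : \mathbb{R}^{nd} \to \mathbb{R}$. Since $\bfX^* \in \distinct_n$ and $H \in C^1(\distinct_n)$ by item (iv), first-order optimality gives $\nabla H(\bfX^*) = 0$. By the characterization in item (iv), this means $c_i(\bfX^*;\bfv) = b_i$ for all $i$, i.e.\ $\bfX^*$ generates a Laguerre tessellation whose cells have volumes $v_i$ and centroids $b_i$ — contradicting the hypothesis that no such tessellation exists. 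Hence every maximizer of $H$ over $K$ lies on $\partial K$ or in $K \setminus \distinct_n$, and since at least one maximizer exists, the claim follows.

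There is essentially no serious obstacle here: all the analytic work (concavity and hence continuity of $H$, $C^1$ regularity of $H$ on $\distinct_n$, and the identification of critical points with solutions of the inverse/fitting problem) has been done in the earlier parts of the theorem, and the present statement is a short consequence combining compactness with first-order optimality. The only point requiring a little care is the logical step that a maximizer of $H$ restricted to $K$ which happens to lie in the interior of $K$ is a genuine local maximizer of the unrestricted $H$, so that differentiability there forces the gradient to vanish; this is where it is essential that the point also lies in $\distinct_n$, since $H$ is only known to be differentiable on distinct seeds.
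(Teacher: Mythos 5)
Your argument is correct and is essentially the paper's own proof, just phrased as a contradiction rather than directly: the paper observes that the non-existence hypothesis forces $\nabla H \neq 0$ throughout $\distinct_n$ (via item (iv)), so the maximum over the compact set $K$ cannot occur at an interior point with distinct seeds. Your added remarks on continuity of the concave $H$ and attainment of the maximum are fine but routine.
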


\begin{proof} $\quad$
\begin{enumerate}[label=(\roman*)]
\item The first claim can be proved via a simple algebraic manipulation as follows. {By definition of $H$ and equation \eqref{eq:W2 in terms of Laguerre},}
\begin{align*}
\nonumber H(X) 
& = \frac{1}{2} \sum_{i=1}^{n} \int_{L_i(\bfX;\bfv)} |x-x_i|^2\, \text{d}x - \frac{1}{2}\sum_{i=1}^{n}  v_i|x_i|^2 + \sum_{i=1}^{n} v_ix_i \cdot \tcenti_i - \frac{1}{2}\int_{\Omega} |x|^2 \, \text{d}x  \\
& = \sum_{i=1}^{n} v_i(\tcenti_i - \cent_i(\bfX;\bfv))\cdot x_i. 
\end{align*}
\item Observe that if $\bfX = (x_1,\ldots,x_1)$ for $x_1 \in \RR^d$, then 
    \begin{align*}
        H({\bfX})
        & = \frac{1}{2}\int_{\Omega}|x-{x_1}|^2 \, \text{d}x - \frac{1}{2}\sum_{i=1}^{n}v_i|{x_1}|^2 + {\sum_{i=1}^n v_i x_1 \cdot b_i} -\frac{1}{2}\int_{\Omega}|x|^2\, \text{d}x\\
        & =
        {x_1 \cdot \left( -\vol(\Omega) \sigma(\Omega) 
        + \sum_{i=1}^n 
        v_i b_i \right)} 
        \\
        & = 0,
    \end{align*}
    where the last two equalities follow from {the assumption that} $(\bfv,\bfB) \in \mathcal{D}_n$. Alternatively, one can deduce that $H(\bfX)=0$ by letting $\lambda \to 0^+$ in part \ref{Thrm4.2AffineTrans}.
\item \label{Thrm4.2concave}

The semi-concavity of $F$ is stated in \cite[Proposition 1]{MerigotSantambrogioSarrazin} and proved in \cite[Propostion 21]{MerigotMirebeau}. The concavity of $H$ is an immediate consequence.

\item Proposition \ref{Thm:MSS} implies that $H$ is
continuously differentiable in $\distinct_n$ and immediately gives the expression for its gradient. 

\item Let $\bfX,\lambda,t$ and $\mathbf{T}$ be defined as in the statement of the theorem. Then, for all $i \in \{1,\ldots,n\}$, $L_i(\bfX;\bfv) = L_i(\lambda \bfX + \mathbf{T}; \bfv)$ by Lemma \ref{Lemma:Meyron} {and the uniqueness of optimal transport maps}. Consequently, if $X \in \distinct_n$ and $\lambda > 0$, then
\begin{equation*}
    H(\lambda \bfX + \mathbf{T}) \stackrel{\eqref{eq:Hcentroids}}{=} \sum_{i=1}^{n}v_i  (b_i - c_i(\lambda\bfX+\mathbf{T};\bfv)) \cdot (\lambda x_i + t) = \sum_{i=1}^{n} v_i (b_i - c_i(\bfX;\bfv)) \cdot (\lambda x_i + t).
\end{equation*}
Recall that $(\mathbf{v},\mathbf{B}) \in \distinct_n$ implies that $\sum_{i=1}^{n} v_ib_i = \text{vol}(\Omega) {\sigma}(\Omega) = \sum_{i=1}^{n} v_ic_i(\bfX;\bfv)$. Therefore 
$t \cdot \sum_{i=1}^{n}v_i (b_i - c_i(\bfX;\bfv)) = 0$
and so
    \begin{equation*}
        H(\lambda \bfX + \mathbf{T}) = \sum_{i=1}^{n} v_i  (b_i - c_i(\bfX;\bfv)) \cdot \lambda x_i = \lambda \sum_{i=1}^{n} v_i (b_i - c_i(\bfX;\bfv)) \cdot x_i \stackrel{\eqref{eq:Hcentroids}}{=} \lambda H(\bfX),
    \end{equation*}
    which establishes the result for $\bfX \in \distinct_n$. The general result for $\bfX \in \mathbb{R}^{nd}$ and $\lambda=0$ can be established from the continuity of $H$. The expression $\nabla H(\bfX) = \nabla H(\lambda \bfX + \mathbf{T})$ follows immediately from the chain rule. Finally, to prove that $H(\lambda \bfX) \le \lambda
H(\bfX)$ for all $\lambda \in \mathbb{R}$, it suffices to show that $H(-\bfX) \leq -H(\bfX)$ since then, for all $\lambda < 0$, 
\[
{H(\lambda \bfX) = H((-\lambda)(-\bfX))
= -\lambda H(-\bfX)
\le \lambda H(\bfX).
}
\]
Since {$H(\mathbf{0})=0$ (by \ref{Thrm4.2H(t)=0}) and} $H$ is concave we have, 
\begin{equation*} 
0 = H(\mathbf{0}) = H \left( \frac{1}{2}\mathbf{X} + \frac{1}{2}(-\bfX) \right) \geq \frac{1}{2}H(\bfX) + \frac{1}{2}H(-\bfX)
\quad 
\Longleftrightarrow
\quad  H(-\bfX) \leq -H(\bfX),
\end{equation*}
as required.
    \item Let $\bfX,\bfY \in \RR^{nd}$. Then {by positive homogeneity and concavity},
    \begin{equation*} 
    \frac{1}{2}H(\bfX+\bfY) = H \left( \frac{1}{2} \bfX + \frac 12 \bfY \right) \ge 
    \frac{1}{2}(H(\bfX)+H(\bfY)).
    \end{equation*}
    Multiplying both sides by 2 gives the desired result.
 \item If there exists $\bfX^{*} \in \distinct_n$ such that $c_i(\bfX^*;\bfv) = b_i$, then $\nabla H(\bfX^*) = 0$ by \ref{Thrm4.2:HisC1}. Since $H$ is concave, it follows that $X^{*}$ is a global maximiser of $H$. Furthermore, $H(\bfX^{*}) = 0$ by \ref{Thrm4.2H:cents}.
  If there does not exist such an $\bfX^* \in \distinct_n$, then there are two cases. Either there exists $\bfX \in \mathbb{R}^{nd}$ such that $H(\bfX) >0$, in which case $\lim_{\lambda \to \infty} H(\lambda \bfX) = + \infty$. Or, if $H(\bfX) \le 0$ for all $\bfX \in \mathbb{R}^{nd}$, then the maximum value of $H$ is zero, which is achieved when all of the seeds are in the same place by \ref{Thrm4.2H(t)=0}. 

\item 
If there does not exist $\bfX^{*} \in \distinct_n$ such that $c_i(\bfX^*;\bfv) = b_i$, then
$\nabla H$ is non-zero everywhere in $\distinct_n$. Therefore the maximum value of $H$ on the compact set $K$ is achieved either on the boundary of $K$ or otherwise at a point in the interior of $K$ where $H$ is not differentiable, namely at a point in $K \setminus \distinct_n$.
\end{enumerate}
\end{proof}

\begin{remark}[Consequences of Theorem \ref{thm:properties H} for Goal \ref{goal: fitting}]
\label{remark:interp}

 Minimising the least squares error $f$ is equivalent to minimising $|\nabla H|$ by equation \eqref{eq:f=|grad H|^2}. Assume that $\inf_{\distinct_n} f$ exists. Then, due to the invariance of $\nabla H$ under uniform translations and dilations of the seeds (equation \eqref{eq:invariance grad H}), we can restrict the minimisation problem \eqref{eq:NLS} to a compact subset of $\distinct_n$:
\[
 \min_{\bfX \in \mathbb{D}_n} 
f(\bfX)
=
 \min_{\bfX \in K \cap\, \mathbb{D}_n} 
f(\bfX)= \min_{\bfX \in K \cap \,\mathbb{D}_n} 
| \nabla H(\bfX)|^2
\]
for any compact set $K \subset \mathbb{R}^{nd}$ containing a neighbourhood of $0$, or more generally a neighbourhood of a point of the form $(t,\ldots,t) \in \mathbb{R}^{nd}$, $t \in \mathbb{R}^d$. 
For a suitable choice of compact convex set $K$, it turns out that maximising $H$ on $K$ (which is a convex optimisation problem) is equivalent to locally minimising $f=|\nabla H|^2$ on $K \cap \distinct_n$; see Section \ref{sec:rewriting NLS}.
\end{remark}


\subsection{Goal \ref{goal: recovering}: Uniqueness and construction of the solution}
\label{sec:uniqueness}

\begin{definition}[Compatible diagrams]
Given $(\bfv,\bfB) \in \mathcal{D}_n$, we say a Laguerre diagram $\{L_i(\bfX,\bfv)\}_{i=1}^n$ 
is \emph{compatible with the data $(\bfv,\bfB)$} if $c_i(\bfX;\bfv) = b_i$ for all $i \in \{1,\ldots,n\}$. In other words, the diagram is compatible with the data $(\bfv,\bfB)$ if the Laguerre cells have volumes $v_i$ and centroids $b_i$.
\end{definition}

The next result ensures us that if there exists a compatible diagram, then it is unique. Therefore Goal \ref{goal: recovering} has a unique solution (in the sense that the Laguerre cells are unique; the generators $(\bfX,\bfw)$ are not unique by Lemma \ref{Lemma:Meyron}).

\begin{theorem}[Uniqueness of compatible diagrams]
         \label{thm:uniqueness}
         Let $(\bfv,\bfB) \in \mathcal{D}_n$.
            Suppose $\bfX, \bfY \in \distinct_{n}$ are such that
            $\{ L_i(\bfX;\bfv) \}_{i=1}^n$ and $\{ L_i(\bfY;\bfv) \}_{i=1}^n$ are compatible with the data $(\bfv,\bfB)$, i.e,
            \begin{equation*}
                \cent_i(\bfX;\bfv) = \cent_i(\bfY;\bfv) = b_i 
                \quad \forall \; i \in \{1,\ldots,n\}.
            \end{equation*}
            Then
            \begin{equation*}
                L_i(\bfX;\bfv) = L_i(\bfY;\bfv)  \quad \forall \; i \in \{1,\ldots,n\}.
            \end{equation*}
            In other words, a Laguerre tessellation is uniquely determined by the volumes and centroids of its cells.
\end{theorem}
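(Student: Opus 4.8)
The plan is to exploit the concavity of $H$ together with the fact, established in Theorem~\ref{thm:properties H}\ref{Thrm4.2:HisC1}, that $\nabla H(\bfX) = 0$ if and only if $\bfX$ generates a diagram compatible with the data $\vB$. Under the hypotheses, both $\bfX$ and $\bfY$ are critical points of $H$, and since $H$ is concave on $\RR^{nd}$, they are both global maximisers of $H$, with common maximum value $H(\bfX) = H(\bfY) = 0$ by part~\ref{Thrm4.2H:cents}. The key observation is then that the segment joining $\bfX$ and $\bfY$ must consist entirely of maximisers: for $t \in [0,1]$, concavity gives $H(t\bfX + (1-t)\bfY) \ge t H(\bfX) + (1-t) H(\bfY) = 0$, while $0$ is the global maximum, so $H \equiv 0$ on $[\bfX,\bfY]$. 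Hence $H$ is affine (in fact constant) along this segment.

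Next I would use this to force the Laguerre cells to coincide. The idea is that $H(\bfX) = F(\bfX;\bfv) + (\text{terms affine in }\bfX)$, where $F(\bfX;\bfv) = \tfrac12 W_2^2(\ldomega, \nu(\bfX,\bfv))$, and $F$ is \emph{strictly} related to the transport structure: by Proposition~\ref{Thm:MSS}, $\bfX \mapsto F(\bfX;\bfv) - \tfrac12\sum_i v_i|x_i|^2$ is concave, so $H$ differs from this concave function only by a linear term, and the restriction of $H$ to the segment being affine translates into $\bfX \mapsto -\tfrac12\sum_i v_i|x_i|^2 + (\text{the part of }F\text{ beyond its semi-concave envelope})$ being affine there. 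More directly, since both endpoints have $\nabla H = 0$ and $H$ is $C^1$ on $\distinct_n$, I would examine $\nabla H$ along the segment: if the segment stays in $\distinct_n$, then $t \mapsto H(t\bfX+(1-t)\bfY)$ being constant and $C^1$ with vanishing derivative at both ends, combined with concavity, forces $\nabla H \equiv 0$ along the whole segment, so \emph{every} point on the segment generates a compatible diagram with centroids $b_i$. The cleanest route to the conclusion is then: $F$ restricted to a segment on which $H$ is affine must itself be affine (the affine correction terms are the same), and strict convexity properties of the squared Wasserstein distance in the appropriate sense pin down the optimal transport map, hence the cells. Concretely, $W_2^2(\ldomega,\nu(\cdot,\bfv))$ as a function of the support points is affine along a segment only if the optimal transport partition (the Laguerre diagram) is the same at the two endpoints, because the transport cost is a minimum of affine-in-$\bfX$ functions (one for each admissible partition) and a minimum of affine functions is affine on a segment only when a single affine piece is active throughout.

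I expect the main obstacle to be the last step: converting ``$H$ affine along the segment'' into ``the cells at $\bfX$ equal the cells at $\bfY$.'' This requires a genuine rigidity/strict-concavity statement for $F$ beyond the bare semi-concavity quoted in Proposition~\ref{Thm:MSS}. One must argue that if $F(\bfX;\bfv) - \tfrac12\sum_i v_i|x_i|^2$ — the concave part — is affine on $[\bfX,\bfY]$, then the optimal transport map (equivalently the Laguerre partition) does not change along the segment; this is plausible because that function is an infimum over admissible transport maps of expressions affine in $\bfX$ after subtracting $\tfrac12\sum v_i|x_i|^2$, and the minimiser structure is rigid. A subtlety is that the segment $[\bfX,\bfY]$ might pass through $\RR^{nd}\setminus\distinct_n$ (non-distinct seeds), where $H$ need not be differentiable; one must check that the argument survives, e.g.\ by working on the relatively open subsegment lying in $\distinct_n$ near the endpoints, or by a limiting argument, and then concluding $L_i(\bfX;\bfv) = L_i(\bfY;\bfv)$ for all $i$ from continuity of the cells in $\bfX$ on $\distinct_n$.
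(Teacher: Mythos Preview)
Your route through the concavity of $H$ is a genuine detour, and the ``main obstacle'' you identify is exactly where the content of the proof lives. The paper bypasses $H$ entirely and gives a two-line optimal-transport argument: define $S:\Omega\to\{x_1,\dots,x_n\}$ by $S(x)=x_i$ on $\operatorname{int}L_i(\bfY;\bfv)$; since $\vol(L_i(\bfY;\bfv))=v_i$, this $S$ is an admissible transport map for the problem $\ldomega\to\nu(\bfX,\bfv)$. Expanding the cost,
\[
M(S)=\int_\Omega|x|^2\,\mathrm{d}x+\sum_i\bigl(v_i|x_i|^2-2v_i\,c_i(\bfY;\bfv)\cdot x_i\bigr)
     =\int_\Omega|x|^2\,\mathrm{d}x+\sum_i\bigl(v_i|x_i|^2-2v_i\,c_i(\bfX;\bfv)\cdot x_i\bigr)=M(T^*),
\]
where the middle equality is exactly the centroid hypothesis $c_i(\bfY;\bfv)=b_i=c_i(\bfX;\bfv)$. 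Uniqueness of the optimal transport map then forces $S=T^*$ a.e., i.e.\ $L_i(\bfX;\bfv)=L_i(\bfY;\bfv)$.

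Your rigidity claim (``a supremum of affine functions is affine on a segment only when a single piece is active throughout'') is not automatic when the active pieces are known only at the \emph{endpoints} of the segment; to make it work you would need to pass to the midpoint $\bfZ=(\bfX+\bfY)/2$, pick any optimal $T_m$ there, and use linearity plus the affine equality to deduce that $T_m$ is simultaneously optimal at both $\bfX$ and $\bfY$, whence $T_m=T^*_\bfX=T^*_\bfY$ by uniqueness at the endpoints. That is a correct completion, and it sidesteps the non-distinct-seed issue on the segment (you only invoke uniqueness at $\bfX$ and $\bfY$, where seeds are distinct). But notice what that argument actually does: it produces a single transport map optimal at both $\bfX$ and $\bfY$. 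The paper observes that you can write down such a map immediately, namely $S$ above, because the centroid hypothesis makes the affine pieces $L_{T^*_\bfX}$ and $L_{T^*_\bfY}$ \emph{identical} as linear functionals of the seeds. So the concavity of $H$, the segment, and the midpoint are all unnecessary scaffolding; the centroid equality already hands you the conclusion in one step.
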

            
            \begin{proof}
                Let $T^{*}:\Omega \to \bfX$ denote the optimal transport map between $\ldomega$ and $\nu(\bfX, \bfv)$, i.e., $T(x)=x_i$ if $x \in \operatorname{int} L_{i}(\bfX ; \bfv)$. Define a map $S:\Omega \to \bfX$ by
                \[
                S(x) = x_i 
                \quad \textrm{if } x \in \operatorname{int} L_{i}(\bfY ; \bfv).
                \]
                Since $\ldomega(S^{-1}(\{x_i\})) = \ldomega(L_i(\bfY ;\bfv)) = v_i$, $S$ is admissible for the transport problem between $\ldomega$ and $\nu(X,\bfv)$. Moreover,
                \begin{align*}
                    M(S)
                    & = \int_{\Omega} |x-S(x)|^2 \, \text{d}x  
                    \\
                    & = \int_{\Omega} |x|^2 \, \text{d}x + \sum_{i=1}^{n}\int_{L_i({\bfY;\bfv})}  (|x_i|^2 - 2x \cdot x_i) \,\text{d}x 
                    \\
                    &= \int_{\Omega} |x|^2 \, \text{d}x + \sum_{i=1}^{n} (v_i|x_i|^2 - 2v_i \cent_i(\bfY;\bfv) \cdot x_i) 
                    \\ 
                    &= \int_{\Omega} |x|^2 \, \text{d}x + \sum_{i=1}^{n} (v_i|x_i|^2 - 2v_i \cent_i(\bfX;\bfv)  \cdot x_i) 
                    \\
                    & = \int_{\Omega} |x|^2 \, \text{d}x + \sum_{i=1}^{n}\int_{L_i({\bfX;\bfv})}  (|x_i|^2 - 2x \cdot x_i) \, \text{d}x 
                    \\
                    & = \int_{\Omega} |x-T(x)|^2 \, \text{d}x 
                    \\
                    & = M(T^{*}).
                \end{align*}
                Therefore $S$ has the same transport cost as the optimal map $T^*$. 
                Consequently, since the optimal transport map is unique (almost everywhere),  $L_{i}(\bfX ; \bfv)=L_{i}(\bfY ; \bfv)$ for all $i \in \{1,\ldots,n\}$, as required.
            \end{proof}

An immediate consequence of Theorems \ref{thm:properties H} and \ref{thm:uniqueness} is the following:
\begin{theorem}[Goal \ref{goal: recovering} is a convex optimisation problem]
\label{cor:soln to Goal 1}
Let $\bfX \in \distinct_n$ and $\bfw \in \mathbb{R}^n$. For $i \in \{1,\ldots,n\}$, define $v_i=
\cellvol_i(\bfX,\bfw)$ and $b_i=\cellcent_i(\bfX,\bfw)$ to be the volume and centroid of the Laguerre cell $\Lag_i(\bfX,\bfw)$.
Let $\bfv=(v_1,\ldots,v_n)$ and $\bfB=(b_1,\ldots,b_n)$.
Then the concave function $H(\, \cdot \; ;\vB)$ achieves its maximum over $\mathbb{R}^{nd}$ in $\distinct_n$. 
Let $\bfY \in \distinct_n$ be any maximiser. Then 
\[
L_i(\bfY;\bfv) = L_i(\bfX;\bfv)  \quad \forall \; i \in \{1,\ldots,n\}.
\]
In other words, we can recover a Laguerre tessellation from the volumes and centroids of its cells by solving a convex optimisation problem (maximising the concave function $H$ in $\mathbb{R}^{nd}$).
\end{theorem}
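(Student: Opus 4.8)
The plan is to deduce the result essentially immediately from Theorem \ref{thm:properties H} (parts \ref{Thrm4.2:HisC1} and \ref{Thrm4.2C=BthenGradH=0}) together with the uniqueness result Theorem \ref{thm:uniqueness}; the statement is really a corollary of those two facts, so the work is mostly in setting things up correctly.

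The first step is a reduction: I would check that the data $(\bfv,\bfB)$ constructed from $(\bfX,\bfw)$ makes $\bfX$ itself a seed configuration whose Laguerre cells have centroids exactly equal to the $b_i$. By construction the cells $\Lag_i(\bfX,\bfw)$ have volumes $v_i$, so by the uniqueness up to an additive constant of the optimal weight vector in Theorem \ref{Thm:DualOT}, the vectors $\bfw$ and $\bfwstarX$ differ by a constant and hence $\Lag_i(\bfX,\bfw) = L_i(\bfX;\bfv)$ for every $i$. Therefore $\cent_i(\bfX;\bfv) = \cellcent_i(\bfX,\bfw) = b_i$ for all $i$, so there exists $\bfX^* = \bfX \in \distinct_n$ with $\cent_i(\bfX^*;\bfv) = b_i$.

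Given this, I would invoke Theorem \ref{thm:properties H}\ref{Thrm4.2C=BthenGradH=0} directly: the existence of such an $\bfX^*$ guarantees that $H(\,\cdot\,;\vB)$ attains its maximum over $\mathbb{R}^{nd}$, that the maximum value is $0$, and that it is attained at $\bfX \in \distinct_n$. This gives the first assertion. For the second, I would take an arbitrary maximiser $\bfY \in \distinct_n$; since $\distinct_n$ is open and $H \in C^1(\distinct_n)$ by Theorem \ref{thm:properties H}\ref{Thrm4.2:HisC1}, the first-order optimality condition forces $\nabla H(\bfY) = 0$, and the gradient formula from the same part then gives $\cent_i(\bfY;\bfv) = b_i$ for all $i$. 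Thus $\bfX$ and $\bfY$ both lie in $\distinct_n$ and produce cells with the same volumes $v_i$ and the same centroids $b_i$, so Theorem \ref{thm:uniqueness} yields $L_i(\bfY;\bfv) = L_i(\bfX;\bfv) = \Lag_i(\bfX,\bfw)$ for all $i$, which is exactly the claimed recovery.

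The only point that requires any care is the first step — the identification $\Lag_i(\bfX,\bfw) = L_i(\bfX;\bfv)$ — which hinges on the fact that the non-uniqueness of Laguerre weights for fixed seeds and fixed cell volumes is precisely an additive constant; everything else is a direct application of the two preceding theorems, so I do not anticipate a substantive obstacle.
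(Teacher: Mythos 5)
Your proposal is correct and follows essentially the same route as the paper's proof: identify $\Lag_i(\bfX,\bfw)$ with $L_i(\bfX;\bfv)$, use the concavity of $H$ together with $\nabla H(\bfX)=0$ (via Theorem \ref{thm:properties H}) to get attainment of the maximum at $\bfX\in\distinct_n$, read off $\cent_i(\bfY;\bfv)=b_i$ from the first-order condition at any maximiser $\bfY$, and conclude with Theorem \ref{thm:uniqueness}. The only difference is that you spell out the justification of $\Lag_i(\bfX,\bfw)=L_i(\bfX;\bfv)$ (uniqueness of the optimal weights up to an additive constant), which the paper simply asserts.
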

\begin{proof}
Note that $\Lag_i(\bfX,\bfw)=L_i(\bfX;\bfv)$ and $\xi_i(\bfX,\bfw)=\cent_i(\bfX;\bfv)$.
The function $H(\, \cdot \; ; \vB)$ achieves its maximum over $\mathbb{R}^{nd}$ at $\bfX \in \distinct_n$ because $H$ is concave and $\nabla H(\bfX ;\vB)=0$ by Theorem \ref{thm:properties H} \ref{Thrm4.2:HisC1}. 
If $\bfY \in \distinct_n$ is any maximiser, then $\nabla H(\bfY ;\vB)=0$ and hence $\cent_i(\bfY;\bfv) = b_i = \cent_i(\bfX;\bfv)$.
We conclude from Theorem \ref{thm:uniqueness} that
$L_i(\bfY;\bfv) = L_i(\bfX;\bfv)$ for all $i \in \{1,\ldots,n\}$, as required.
\end{proof}

\subsection{Goal \ref{goal: recovering}: Existence of compatible diagrams}

\label{sec:existence}

In this section we seek necessary conditions on the data $\vB$ for the existence of a compatible diagram, namely, for the minimum value of the fitting error \eqref{eq:f} to be zero. 
In this case the compatible diagram can be found by maximising the concave function $H$, as described in the previous section.
In Section \ref{sec:rewriting NLS} we will study Goal \ref{goal: fitting}, where the minimum of the fitting error \eqref{eq:f} is greater than zero.

Generically, for `typical' data $(\bfv,\bfB) \in \mathcal{D}_n$ there does not exist a compatible diagram with cells with volumes $v_i$ and centroids $b_i$, as the following example demonstrates.

\begin{example}[Non-existence of a compatible diagram]
\label{example: non-existence}
Take $d=1$, $\Omega=[0,1]$, $n=2$, $\bfv=(0.5,0.5)$, $\bfX=(x_1, x_2) \in \mathbb{R}^2$ with $x_1 < x_2$. Then $L_{1}(\bfX ; \bfv)=[0,0.5]$ and 
$L_{2}(\bfX ; \bfv)=[0.5,1]$. (Note that this is true whatever the positions of $x_1$ and $x_2$, as long as $x_1 < x_2$.) If the target centroids are $\bfB=(0.25,0.75)$, then the diagram $\{ L_{i}(\bfX ; \bfv)\}_{i=1}^2$ is compatible with $(\bfv,\bfB)$ but, for any other choice of $\bfB$, there does not exist a compatible diagram.
\end{example}
    \begin{lemma}[Necessary condition for existence: minimum distance between centroids]
    Let $(\bfv,\bfB) \in \mathcal{D}_n$ and  
        suppose that there exists a Laguerre tessellation $\{ L_i(\bfX;\bfv) \}_{i=1}^n$ of $\Omega$ that is compatible with the data $(\bfv,\bfB)$. 
        Then, for all $i \in \{1,\ldots,n\}$,
        \begin{equation}
        \label{eq: distance from boundary}
\mathrm{dist}(\cent_i(\bfX;\bfv),\partial \Omega)
=
\mathrm{dist}(b_i,\partial \Omega)
\ge 
r_i,
    \end{equation}
where
\[
r_i = 
\frac{v_i}{4 \alpha_{d-1}} \left( \frac{\mathrm{diam}(\Omega)}{2} \right)^{1-d}
\]
and $\alpha_{d-1}$ is the volume of the unit ball in $\mathbb{R}^{d-1}$.
Moreover, for all $i,j \in \{ 1,\ldots, n\}$, $i \ne j$,
\begin{equation}
        \label{eq: distance between centroids}
            |\cent_i(\bfX;\bfv) - \cent_j(\bfX;\bfv)| = |b_i - b_j | \geq r_{i} + r_j.
        \end{equation}
    \end{lemma}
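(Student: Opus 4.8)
The plan is to prove the lower bound on $\mathrm{dist}(b_i, \partial\Omega)$ first, and then deduce the separation bound \eqref{eq: distance between centroids} as a corollary. The starting observation is that since $\{L_i(\bfX;\bfv)\}$ is compatible with $(\bfv,\bfB)$, we have $\cent_i(\bfX;\bfv) = b_i$ and $\vol(L_i(\bfX;\bfv)) = v_i$, so it suffices to work with a single convex cell $C := L_i(\bfX;\bfv) \subseteq \Omega$, which is convex with $\vol(C) = v_i$ and centroid $c := \setcent(C)$, and to show $\mathrm{dist}(c, \partial\Omega) \ge r_i$. Since $C \subseteq \Omega$ we have $\mathrm{dist}(c, \partial\Omega) \ge \mathrm{dist}(c, \partial C)$ when $c \in C$ (which holds because $C$ is convex), so it further suffices to bound $\mathrm{dist}(c, \partial C)$ from below for a convex body of given volume and bounded diameter. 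Actually one must be slightly careful: $c$ may be close to $\partial\Omega$ even if it is far from $\partial C$ only if $C$ touches $\partial\Omega$ near $c$; but in that case $\mathrm{dist}(c,\partial\Omega) \le \mathrm{dist}(c,\partial C)$ is false in the wrong direction. The correct route is: $\mathrm{dist}(c,\partial\Omega) \ge \mathrm{dist}(c, \partial C)$ is simply \emph{not} what we want. Instead, fix $p \in \partial\Omega$ realizing $\mathrm{dist}(c,\partial\Omega)$, let $\ell$ be the line through $c$ and $p$, and note $C$ lies on one side; the key geometric inequality to establish is that if $C$ is contained in a halfspace whose bounding hyperplane is at distance $h$ from $c$, then $h \ge v_i / (\text{something involving } \mathrm{diam}(\Omega))$.

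The main estimate is the following elementary fact about convex bodies: if $C$ is a convex body with centroid $c$ and $C$ is contained in the halfspace $\{x : (x - c)\cdot e \le h\}$ for some unit vector $e$ and $h > 0$, then $h \ge \frac{1}{d+1}\,w$, where $w$ is the width of $C$ in direction $e$; more usefully, we can bound $\vol(C)$ by slicing perpendicular to $e$. Write $C$ as a union of slabs: $\vol(C) = \int_{-a}^{h} A(s)\,ds$ where $A(s)$ is the $(d-1)$-volume of the slice $C \cap \{(x-c)\cdot e = s\}$, and $a \le \mathrm{diam}(\Omega)$, $h \le \mathrm{diam}(\Omega)$ as well, while each slice satisfies $A(s) \le \alpha_{d-1} (\mathrm{diam}(\Omega)/2)^{d-1}$. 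The centroid condition $\int_{-a}^h s\, A(s)\, ds = 0$ forces mass on the positive side: specifically $\int_0^h s A(s)\, ds = \int_{-a}^0 (-s) A(s)\, ds \ge 0$, and combined with $A(s) \le \alpha_{d-1}(\mathrm{diam}(\Omega)/2)^{d-1}$ on $[0,h]$, one gets that the total volume $v_i = \vol(C) \le 2 \int_0^h A(s)\,ds + (\text{corrections})$... The cleaner argument: since the slice areas on the negative side can be related to those on the positive side by convexity (Brunn's concavity of $s \mapsto A(s)^{1/(d-1)}$), the negative part is controlled, giving $v_i \le 2h \cdot \alpha_{d-1}(\mathrm{diam}(\Omega)/2)^{d-1}$ up to the factor in the statement. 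Rearranging yields $h \ge \frac{v_i}{4\alpha_{d-1}}(\mathrm{diam}(\Omega)/2)^{1-d} = r_i$, matching the claimed constant (the factor $4$ presumably absorbing both the factor $2$ from the two-sided estimate and a factor from the crude slice bound). This is the step I expect to be the main obstacle: getting the geometric constant exactly right, in particular justifying that the portion of $C$ on the far side of the centroid hyperplane contributes no more than (roughly) the portion on the near side, which is where Brunn–Minkowski / log-concavity of the slice profile enters.

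Once the distance-to-boundary bound \eqref{eq: distance from boundary} is established for every $i$, the separation bound \eqref{eq: distance between centroids} follows quickly. Since $b_i = \cent_i(\bfX;\bfv)$ lies in the convex cell $L_i(\bfX;\bfv)$ and the cells have disjoint interiors, consider the two distinct cells $L_i := L_i(\bfX;\bfv)$ and $L_j := L_j(\bfX;\bfv)$. Each contains a ball of radius at least $r_i$ (resp.\ $r_j$) around its centroid — indeed, the same slicing argument applied with $\partial\Omega$ replaced by the separating hyperplane between $L_i$ and $L_j$ (or any supporting hyperplane of $L_i$) shows $\mathrm{dist}(b_i, \partial L_i) \ge r_i$, since $L_i$ is itself a convex body of volume $v_i$ and diameter at most $\mathrm{diam}(\Omega)$. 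Hence $B(b_i, r_i) \subseteq L_i$ and $B(b_j, r_j) \subseteq L_j$, and since $L_i$ and $L_j$ have disjoint interiors, these balls have disjoint interiors, which immediately gives $|b_i - b_j| \ge r_i + r_j$. I would present the ball-containment statement $B(b_i,r_i)\subseteq L_i$ as the real content (it implies both \eqref{eq: distance from boundary} and \eqref{eq: distance between centroids} at once), prove it via the slicing/Brunn argument above applied to each supporting hyperplane of $L_i$, and then read off the two displayed inequalities as consequences.
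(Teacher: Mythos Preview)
Your overall strategy is exactly the paper's: reduce both inequalities to the single statement $\mathrm{dist}(b_i,\partial L_i(\bfX;\bfv))\ge r_i$ (equivalently $B(b_i,r_i)\subseteq L_i$), prove that by a one-dimensional slicing estimate combined with the isodiametric bound $A(s)\le \alpha_{d-1}(\mathrm{diam}(\Omega)/2)^{d-1}$ on slice areas, and then read off \eqref{eq: distance from boundary} and \eqref{eq: distance between centroids} from disjointness of the cells. The meandering in your first paragraph eventually lands on the right reduction.

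Where you diverge from the paper---and where your argument is genuinely incomplete---is the slicing step itself. You invoke Brunn--Minkowski/log-concavity of the slice profile and hedge with ``the factor~$4$ presumably absorbing\ldots'', which is not a proof. The paper's argument here is both simpler and sharper, and uses no convexity of the cell at all: after rotating so that $L_i\subseteq\{x_d\ge 0\}$ and $\mathrm{dist}(b_i,\partial L_i)=b_i\cdot e_d$, pick the \emph{median level} $s>0$ with $\vol(L_i\cap\{x_d\le s\})=v_i/2$. That lower half sits in a slab of height $s$ over the projection of $\Omega$, so $v_i/2\le s\,\alpha_{d-1}(\mathrm{diam}(\Omega)/2)^{d-1}$, i.e.\ $s/2\ge r_i$. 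Then
\[
b_i\cdot e_d \;=\; \frac{1}{v_i}\int_{L_i} x_d\,\mathrm{d}x
\;\ge\; \frac{1}{v_i}\int_{L_i\cap\{x_d\ge s\}} x_d\,\mathrm{d}x
\;\ge\; \frac{1}{v_i}\cdot s\cdot \frac{v_i}{2}
\;=\;\frac{s}{2}\;\ge\; r_i,
\]
since the upper half also has volume $v_i/2$ and every point in it has $x_d\ge s$. This delivers the constant $1/4$ on the nose with no appeal to Brunn--Minkowski. I would replace your log-concavity sketch with this median-level trick; the rest of your write-up (ball containment $\Rightarrow$ both displayed inequalities) can stand as is.
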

    \begin{proof}
   The idea for the proof comes from \cite[Appendix A]{MerigotSantambrogioSarrazin}.
        To prove \eqref{eq: distance from boundary} and \eqref{eq: distance between centroids},
        it suffices to show that
        \begin{equation*}
            \mathrm{dist}(b_i,\partial L_i(\bfX;\bfv) )\geq r_i
        \end{equation*}
        for all $i \in \{1,...,n\}$. 
         Let $e_d$ be the $d$-th standard basis vector of $\mathbb{R}^{d}$.
        Without loss of generality (by rotating and translating $\Omega$ if necessary), we can assume that
         \[
             L_i(\bfX;\bfv) \subseteq \{x \in \RR^d: x \cdot e_d \ge 0\}
             \]    
      and
      \[
      \text{dist}(b_i,\partial L_i(\bfX;\bfv) ) = b_i \cdot e_d. 
      \]
Let $\Pi(\Omega)$ denote the projection of $\Omega$ onto the set $\{ x \in \mathbb{R}^d : x \cdot e_d = 0\}$. 
      There exists $s > 0$ such that    \begin{align*}
            \frac{v_i}{2} & = \mathcal{L}^d(L_i(\bfX;\bfv) \cap \{x \in \mathbb{R}^d : x \cdot e_d \le s\}) 
            \\
& \le s \mathcal{H}^{d-1}(\Pi(\Omega))
            \\
            & \le 
            s \alpha_{d-1} \left( \frac{\mathrm{diam}(\Omega)}{2} \right)^{d-1} 
        \end{align*}
        by the isodiametric inequality \cite[Theorem 8.8]{Gruber} and the fact that $\mathrm{diam}(\Pi(\Omega)) \le \mathrm{diam}(\Omega)$. By definition of $r_i$, we have $\frac s2 \ge r_i$.
                 Define 
                 \[
                 A = L_i(\bfX;\bfv) \cap \{x \in \mathbb{R}^d :  x \cdot e_d \ge s\}.
                 \]
                 Note that
                 $\mathcal{L}^d(A)=v_i/2$.
                 Then
        \begin{align*}
            b_i \cdot e_d = \frac{1}{v_i}\int_{L_i} x\cdot e_d\, \text{d}x \geq  \frac{1}{v_i}\int_{A} x \cdot e_d \, \text{d}x 
            \ge 
            \frac{1}{v_i}\int_{A} s \,  \text{d}x
            = \frac{s}{2} \geq {r_i},
        \end{align*}
    as desired. 
    \end{proof}


    If the domain $\Omega$ is a cuboid, then one can improve the bound \eqref{eq: distance from boundary} as follows.
    
    \begin{lemma}[Necessary condition for existence: distance of centroids from boundary]
        Let $\Omega = [0,l_1] \times \cdots \times [0,l_d]$ and let $b_i^j$ denote the $j^{\text{th}}$ coordinate of the $i^{\text{th}}$ target centroid. Let $(\bfv,\bfB) \in \mathcal{D}_n$ and  
        suppose that there exists a Laguerre tessellation $\{ L_i(\bfX;\bfv) \}_{i=1}^n$  of $\Omega$ that is compatible with the data $(\bfv,\bfB)$. Then
\[
\frac{v_il_j}{2 \vol(\Omega)} \leq b_{i}^{j} 
\leq l_j-\frac{v_il_j}{2  \vol(\Omega)}.
\]
        In particular,
        \begin{equation*}
        \mathrm{dist}(\cent_i(\bfX;\bfv),\partial \Omega)
=
\mathrm{dist}(b_i,\partial \Omega)
\ge \min_j \frac{v_il_j}{2 \vol(\Omega)}.
        \end{equation*}
    \end{lemma}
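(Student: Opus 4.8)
The plan is to reduce the $d$-dimensional statement to a one-dimensional computation, exploiting the product structure of the cuboid $\Omega = [0,l_1] \times \cdots \times [0,l_d]$. Fix an index $i$ and a coordinate direction $j$. I want to bound $b_i^j = c_i(\bfX;\bfv)^j$ from below by $\tfrac{v_i l_j}{2\vol(\Omega)}$; the upper bound then follows by the symmetry $x \mapsto l_j - x$ in the $j$-th coordinate, which maps $\Omega$ to itself and sends a compatible diagram to a compatible diagram for the reflected data.

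For the lower bound, I would slice $\Omega$ by the hyperplanes $\{x \cdot e_j = t\}$ and use Fubini. Write $L = L_i(\bfX;\bfv)$ and $v = v_i = \vol(L)$. Let $s > 0$ be chosen so that $\vol\big(L \cap \{x \cdot e_j \le s\}\big) = v/2$. Since $L \subseteq \Omega$ and each slice $\Omega \cap \{x\cdot e_j = t\}$ has $(d-1)$-dimensional measure at most $\vol(\Omega)/l_j$ (the product of the other side lengths), we get
\[
\frac{v}{2} = \vol\big(L \cap \{x\cdot e_j \le s\}\big) \le \int_0^s \mathcal{H}^{d-1}\big(\Omega \cap \{x\cdot e_j = t\}\big)\,\mathrm{d}t \le s\,\frac{\vol(\Omega)}{l_j},
\]
so $s \ge \tfrac{v l_j}{2\vol(\Omega)}$. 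Now set $A = L \cap \{x\cdot e_j \ge s\}$, which has volume $v/2$ by the choice of $s$, and estimate
\[
b_i^j = \frac{1}{v}\int_L x\cdot e_j\,\mathrm{d}x \ge \frac{1}{v}\int_A x\cdot e_j\,\mathrm{d}x \ge \frac{1}{v}\int_A s\,\mathrm{d}x = \frac{s}{2} \ge \frac{v l_j}{2\vol(\Omega)},
\]
where the first inequality uses that the integrand $x\cdot e_j \ge 0$ on $L \subseteq \Omega$ (after noting $\Omega \subseteq \{x\cdot e_j \ge 0\}$). This is the same slicing trick used in the proof of the preceding lemma, just with the cuboid cross-section bound $\mathcal{H}^{d-1}(\Pi(\Omega)) = \vol(\Omega)/l_j$ replacing the isodiametric estimate, which is why the constant improves.

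Finally, the distance statement follows from the coordinatewise bounds: since $\partial\Omega$ consists of the faces $\{x^j \in \{0,l_j\}\}$, we have $\mathrm{dist}(b_i,\partial\Omega) = \min_j \min\{b_i^j,\ l_j - b_i^j\} \ge \min_j \tfrac{v_i l_j}{2\vol(\Omega)}$, and $b_i = c_i(\bfX;\bfv)$ by the compatibility hypothesis. I do not anticipate a serious obstacle here; the only mild care needed is in justifying the existence of the level $s$ (the function $t \mapsto \vol(L\cap\{x\cdot e_j \le t\})$ is continuous and nondecreasing, running from $0$ to $v$, so it hits $v/2$) and in confirming that the reflection argument genuinely preserves membership in $\mathcal{D}_n$ and the compatibility of the diagram — both of which are routine since reflection is an isometry of $\Omega$ and commutes with taking Laguerre cells and centroids.
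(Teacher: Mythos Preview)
Your slicing argument has a factor-of-2 slip in the final chain of inequalities. From $s \ge \tfrac{v_i l_j}{2\vol(\Omega)}$ you can only conclude $\tfrac{s}{2} \ge \tfrac{v_i l_j}{4\vol(\Omega)}$, not $\tfrac{v_i l_j}{2\vol(\Omega)}$. Indeed, in the extremal slab $L = [0,\delta] \times [0,l_2]\times\cdots\times[0,l_d]$ with $\delta = \tfrac{v_i l_j}{\vol(\Omega)}$, the median level is $s = \delta/2$ and the centroid coordinate is exactly $b_i^j = \delta/2 = s$, so your bound $b_i^j \ge s/2$ loses a factor of two at precisely the configuration that should saturate the inequality. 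The loss comes from both discarding the nonnegative contribution of $L\setminus A$ and from replacing $x\cdot e_j$ by $s$ on $A$.

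Your approach can be repaired to give the sharp constant: writing $\varphi(t)=\mathcal{H}^{d-1}\big(L\cap\{x\cdot e_j=t\}\big)$, one has $0\le\varphi(t)\le P:=\vol(\Omega)/l_j$ and $\int_0^{l_j}\varphi=v_i$, so by the bathtub principle $\int_0^{l_j} t\,\varphi(t)\,\mathrm{d}t$ is minimised when $\varphi=P\,\mathbf{1}_{[0,v_i/P]}$, giving $b_i^j\ge \tfrac{v_i}{2P}=\tfrac{v_i l_j}{2\vol(\Omega)}$. That said, the paper takes a different route: it evaluates the concave helper function $H$ at the special seed configuration $\lambda\mathbf{e}_i^j$ (all seeds at the origin except the $i$-th, which lies on the $j$-th axis), computes the two-cell diagram explicitly as a pair of slabs, and reads off the inequality from $H\le 0$, which holds whenever a compatible diagram exists. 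The paper's method ties the bound directly to the optimality of $H$, whereas your (corrected) argument is purely geometric and independent of the optimal-transport machinery.
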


\begin{proof}
    The inequalities are obtained by computing $H(\lambda \mathbf{e}^{j}_i)$ for suitable $\lambda \in \mathbb{R} \setminus \{0\}$, where $\mathbf{e}^{j}_i \in \mathbb{R}^{nd}$ denotes the $(d(i-1)+j)$-th standard basis vector of $\mathbb{R}^{nd}$, for $i \in \{1,...,n\}$ and $j \in \{1,\ldots,d\}$. If there exists a compatible diagram, then $H( \lambda  \mathbf{e}_i^{j}) \leq 0$ by Theorem \ref{thm:properties H}(vii). With this choice of seeds, $\bfX=\lambda  \mathbf{e}_i^{j}$, there are only two distinct seeds (all the seeds are located at the origin except for the $i$-th seed, which has coordinates $x_i^j=1$ and $x_i^k=0$ for $k \ne j$) and hence $H(\bfX)$ is easy to evaluate.

    Assume without loss of generality that $i = 1$ and $j = 1$. In order to evaluate $H(\lambda \mathbf{e}_1^1)$, we need to compute $W_2^2(\ldomega,\nu(\bfX,\bfv))$, where $\nu(\bfX,\bfv)) = v_1 \delta_{x_1} + (\vol(\Omega)-v_1) \delta_0$ and $x_1=(\lambda,0,\ldots,0) \in \mathbb{R}^d$.
    The optimal transport map $T^*$ partitions $\Omega$ into two cells, one of volume $v_1$ and the other of volume $\vol(\Omega) - v_1$. The boundary between the two cells consists of the points $x \in \Omega$ satisfying the equation
    \begin{equation*}
        |x- x_1|^2 - w \leq |x-0|^2
        \quad 
    \Longleftrightarrow
    \quad x \cdot (1,0,\ldots,0) = \frac  {\lambda^2 - w}{2 \lambda}
    \end{equation*}
    for some $w \in \mathbb{R}$.
    Therefore the boundary is contained in the plane with normal vector $(1,0,\ldots,0)$
    and distance $\delta:=(\lambda^2 -w)/(2 \lambda)$ from the origin. 
    The value of $\delta$ is determined by the volume constraints and the sign of $\lambda$. If $\lambda > 0$, 
    then the Laguerre cell corresponding to the seed $x_1$
    is $L_1 := [\delta,l_1] \times [0,l_2] \times \cdots \times [0,l_d]$, which has volume $ v_1$.
    Therefore 
    \[
(l_1-\delta) l_2 \cdots l_d = v_1
  \quad \Longleftrightarrow
  \quad 
  \delta = l_1 -  \frac{v_1}{l_2 \cdots l_d}.
    \]
    Note that $L_1$
    has centroid
    \[
c_1 := \left( \delta + \frac{l_1-\delta}{2}, \frac{l_2}{2},\ldots,\frac{l_d}{2} \right)
=
\left( l_1 - \frac{v_1}{2 l_2 \cdots l_d}, \frac{l_2}{2},\ldots,\frac{l_d}{2} \right).
    \]
Let 
$L_0 = [0,\delta] \times [0,l_2] \times \cdots \times [0,l_d]$ be the Laguerre cell corresponding to the seed $0$.
Then
\begin{align*}
H(\lambda \mathbf{e}_i^j)
& 
= \frac 12 
\int_{L_1} |x - x_1|^2 \, \text{d}x
+ \frac 12 \int_{L_0} |x-0|^2 \, \text{d}x 
 - \frac{1}{2}v_1 |x_1|^2 
 + v_1 x_1 \cdot b_1 - \frac{1}{2}\int_{\Omega} |x|^2 \, \text{d}x
\\ & =
v_1 x_1 \cdot \left( b_1 - c_1 \right)  
\\
& = v_1 \lambda \left(  b_1^1 - \left(  l_1 - \frac{v_1}{2 l_2 \cdots l_d}\right)
\right).
\end{align*}
Since $\lambda >0$,
\[
H(\lambda \mathbf{e}_1^1) \le 0 
\quad \Longleftrightarrow \quad 
b_1^1 \le l_1 - \frac{v_1}{2 l_2 \cdots l_d} =
l_1 - \frac{v_1 l_1}{2 \vol(\Omega)},
\]
as required. The other inequality, $b_1^1 \ge v_1 l_1 /(2 \vol (\Omega))$, can be derived analogously by taking $\lambda < 0$.    
\end{proof}

\begin{lemma}[Necessary condition for existence: cyclical monotonicity]
Let $(\bfv,\bfB) \in \mathcal{D}_n$ and  
        suppose that there exists a Laguerre tessellation $\{ L_i(\bfX;\bfv) \}_{i=1}^n$  of $\Omega$ that is compatible with the data $(\bfv,\bfB)$.         Then the set $\{(\tcenti_i,x_i)\}_{i=1}^n$ is cyclically monotone, which means that for every
index set $\mathcal{I} \subseteq \{1,\ldots,n\}$ and every
permutation $\sigma$ of $\mathcal{I}$,
\[
\sum_{i \in \mathcal{I}} b_i \cdot x_i 
\ge 
\sum_{i \in \mathcal{I}} b_i \cdot x_{\sigma(i)}.
\]
In particular, by taking $\mathcal{I}=\{i,j\}$, it follows that
\[
(b_i - b_j) \cdot (x_i - x_j) \ge 0
\]
for all
 $i,j \in \{1,\ldots,n\}$, $i \ne j$.
\end{lemma}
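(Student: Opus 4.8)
The plan is to read off cyclical monotonicity directly from the defining inequalities of the Laguerre cells, then pass from points to centroids by an averaging argument. Let $\bfw^{*} = \bfwstarX$ be the optimal weight vector, so that $L_i(\bfX;\bfv) = \Lag_i(\bfX,\bfw^{*})$ by \eqref{defn:LaguerreWstar}, and recall from \eqref{eq:LaguerreTesselation} that every $y \in L_i(\bfX;\bfv)$ satisfies $|y-x_i|^2 - w^{*}_i \le |y-x_j|^2 - w^{*}_j$ for all $j \in \{1,\ldots,n\}$.

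First I would fix an index set $\mathcal{I} \subseteq \{1,\ldots,n\}$ and a permutation $\sigma$ of $\mathcal{I}$, choose arbitrary points $y_i \in L_i(\bfX;\bfv)$ for $i \in \mathcal{I}$, and apply the cell inequality above with $j = \sigma(i)$ to get $|y_i - x_i|^2 - w^{*}_i \le |y_i - x_{\sigma(i)}|^2 - w^{*}_{\sigma(i)}$. Summing over $i \in \mathcal{I}$ and using that $\sigma$ is a bijection of $\mathcal{I}$ — so that $\sum_{i\in\mathcal{I}} w^{*}_{\sigma(i)} = \sum_{i\in\mathcal{I}} w^{*}_i$ and $\sum_{i\in\mathcal{I}} |x_{\sigma(i)}|^2 = \sum_{i\in\mathcal{I}} |x_i|^2$ — the weight terms cancel, and after expanding each square the terms $|y_i|^2$ and $|x_i|^2$ cancel too, leaving
\[
\sum_{i \in \mathcal{I}} y_i \cdot x_i \ \ge\ \sum_{i \in \mathcal{I}} y_i \cdot x_{\sigma(i)}.
\]
The crucial point is that this holds for every choice of the $y_i$ in their respective cells, pointwise, with no exceptional set.

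Next I would integrate each $y_i$ independently over $L_i(\bfX;\bfv)$ and divide by $v_i = \vol(L_i(\bfX;\bfv)) > 0$; by the definition of the centroid together with the compatibility hypothesis $\cent_i(\bfX;\bfv) = b_i$, the average of $y_i$ over $L_i(\bfX;\bfv)$ equals $b_i$. The displayed inequality then becomes exactly $\sum_{i\in\mathcal{I}} b_i \cdot x_i \ge \sum_{i\in\mathcal{I}} b_i \cdot x_{\sigma(i)}$, which is the asserted cyclical monotonicity of $\{(b_i,x_i)\}_{i=1}^n$. Finally, specialising to $\mathcal{I} = \{i,j\}$ with $\sigma$ the transposition yields $b_i \cdot x_i + b_j \cdot x_j \ge b_i \cdot x_j + b_j \cdot x_i$, which rearranges to $(b_i - b_j)\cdot(x_i - x_j) \ge 0$.

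I do not expect any real obstacle here: the argument needs neither compactness nor any almost-everywhere bookkeeping, since the Laguerre inequality is genuinely pointwise on each cell. The only step deserving a moment's care is noting that, in the sum over $\mathcal{I}$, the weights and the norms $|x_i|^2$ enter only through the multisets $\{w^{*}_i : i \in \mathcal{I}\}$ and $\{|x_i|^2 : i \in \mathcal{I}\}$, which are permutation-invariant, so the inequality reduces cleanly to one in the bilinear terms alone; everything else is a routine expansion and an interchange of sum and integral. (Equivalently, one could phrase this as the statement that the support of the optimal transport plan between $\ldomega$ and $\nu(\bfX,\bfv)$ is cyclically monotone and that cyclical monotonicity survives averaging within the fibres $L_i(\bfX;\bfv)$, but the direct computation above is self-contained.)
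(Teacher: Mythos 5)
Your proof is correct, but it takes a genuinely different route from the paper's. The paper argues abstractly: the optimal transport map between $\mathcal{L}^d_\Omega$ and $\nu(\bfX,\bfv)$ is the gradient of a convex function $u$, the centroid $b_i$ lies in the convex cell $L_i(\bfX;\bfv)$ so $T^*(b_i)=x_i$, hence $\{(b_i,x_i)\}_{i=1}^n$ sits in the graph of $\partial u$, and Rockafellar's theorem delivers cyclical monotonicity. You instead work directly from the defining half-space inequalities \eqref{eq:LaguerreTesselation}: applying $|y-x_i|^2-w_i^*\le |y-x_{\sigma(i)}|^2-w_{\sigma(i)}^*$ at arbitrary $y_i\in L_i$, summing over the cycle so that the weights and the $|x_i|^2$ terms cancel by permutation invariance, and then averaging each $y_i$ over its cell (the inequality is affine in each $y_i$, so the average passes through and produces the centroids). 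Your cancellation and averaging steps are both sound. What your approach buys is self-containedness: it needs neither Brenier's structure theorem nor Rockafellar's theorem, and it sidesteps the (minor) point that the paper must implicitly use, namely that the centroid of a positive-volume convex cell lies in the cell so that $T^*(b_i)=x_i$ is well defined. What the paper's approach buys is brevity and a clean conceptual placement of the lemma within standard optimal transport theory — indeed your closing parenthetical remark, that you are really re-deriving the cyclical monotonicity of the support of the optimal plan and checking it survives averaging over the fibres, is precisely the bridge between the two arguments.
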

\begin{proof}
 Let $T^{*}:\Omega \to \bfX$ denote the optimal transport map between $\ldomega$ and $\nu(\bfX, \bfv)$. A standard result in optimal transport theory states that $T^* = \nabla u$ for some convex function $u:\Omega \to \mathbb{R}$ \cite[Section 1.3.1]{SantambrogioBook}.
Since $b_i = \cent_i(\bfX;\bfv) \in L_i(\bfX;\bfv)$, then $T^*(b_i)=x_i$. Therefore
the set $\{ (b_i,x_i)\}_{i=1}^n = \{ (b_i,T^*(b_i)) \}_{i=1}^n = \{ (b_i,\nabla u(b_i)) \}_{i=1}^n$ is contained in the graph of the subdifferential of $u$. It follows that $\{ (b_i,x_i) \}_{i=1}^n$ is cyclically monotone by Rockafellar's Theorem \cite[p.~238, Theorem 24.8]{Rockafellar}.
\end{proof}
          
As far as we are aware, it is an open problem to find sufficient conditions for the existence of compatible diagrams, that is, to find conditions on $(\bfv,\bfB) \in \mathcal{D}_n$ that guarantee that there exists a Laguerre diagram that is compatible with $(\bfv,\bfB)$. A fully discrete version of this problem (where the Lebesgue measure on $\Omega$ is replaced by a discrete measure coming from a discretisation of $\Omega$) is solved in \cite[Theorem 2.1]{BriedenGritzmann2012}.

\subsection{Goal \ref{goal: fitting}: Rewriting \eqref{eq:NLS} 
as a convex optimisation problem}
\label{sec:rewriting NLS}



In this section we consider Goal \ref{goal: fitting}. Our main theorem is the following.

\begin{theorem}[Local minimisers of \eqref{eq:NLS} can be found by convex optimisation]
\label{thm:main}
Let $(\bfv,\bfB) \in \mathcal{D}_n$ be such that there does not exists a Laguerre diagram compatible with $(\bfv,\bfB)$. 
Fix $t \in \mathbb{R}^d$, $\mathbf{T} = (t,\ldots,t) \in \mathbb{R}^{nd}$ and  $R>0$. Define $B \subset \mathbb{R}^{nd}$ to be the compact set
$B=\{\bfX \in \mathbb{R}^{nd} : |\bfX - \mathbf{T} | \le R\}$. 
Let $\bfX^* \in B$ be a global maximiser of the concave function $H$ on $B$.
Assume that 
$\bfX^* \in \distinct_n$, $H$ is 3-times continuously differentiable in a neighbourhood of $\bfX^*$, and $\mathrm{dim}(\mathrm{ker}(D^2 H(\bfX^*)))=1+d$. 
Then $\bfX^*$ is a local minimiser of $f$ on $\distinct_n$. In particular, we can find local minimisers of the least squares error $f$ by solving the convex optimisation problem $\max_{B} H$.
\end{theorem}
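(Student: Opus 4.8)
The plan is to exploit the relation $f = |\nabla H|^2$ from Theorem \ref{thm:properties H}\ref{Thrm4.2:HisC1} and study $f$ locally near $\bfX^*$ via a second-order analysis. Since there is no compatible diagram, $\nabla H$ is nonvanishing on $\distinct_n$, so $f(\bfX^*) = |\nabla H(\bfX^*)|^2 > 0$; we must show $\bfX^*$ is a local minimum of $f$ on $\distinct_n$. The idea is to split directions at $\bfX^*$ into two groups. In directions $\bfY$ lying in $\ker(D^2 H(\bfX^*))$ — which by hypothesis is the $(1+d)$-dimensional space spanned by the ``diagonal'' vector $(e_1,\ldots,e_1),\ldots$ and the translation modes $\mathbf{e}^1,\ldots,\mathbf{e}^d$ (the infinitesimal generators of the dilation/translation invariance from Lemma \ref{Lemma:Meyron} and equation \eqref{eq:invariance grad H}) — the gradient $\nabla H$ is locally constant to first order, so $f$ does not decrease along these directions; more carefully one uses that $\nabla H$ is genuinely invariant along the full nonlinear orbit $\lambda\bfX + \mathbf{T}$, so $f$ is exactly constant along a $(1+d)$-dimensional submanifold through $\bfX^*$. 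In directions transverse to $\ker(D^2 H(\bfX^*))$, $D^2 H(\bfX^*)$ is negative definite, and we must show $f$ strictly increases.

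The key computation is the Hessian of $f = |\nabla H|^2$ at $\bfX^*$. Writing $g = \nabla H$, we have $\nabla f = 2 (D^2 H)\, g$ and
\[
D^2 f(\bfX^*) = 2\, (D^2 H(\bfX^*))^2 + 2 \sum_{k} g_k(\bfX^*)\, D^2(\partial_k H)(\bfX^*).
\]
The first term $2(D^2H(\bfX^*))^2$ is positive semidefinite, with kernel exactly $\ker(D^2 H(\bfX^*))$; this is the ``good'' leading term and it is why we want $H$ to be $C^3$ near $\bfX^*$ (so the second term makes sense) and why the third-order data enters. The second term is a correction of order $|g(\bfX^*)|$. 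The plan is to show that on the $(1+d)$-codimension complement of $\ker(D^2 H(\bfX^*))$, the first term dominates: since $D^2 H(\bfX^*)$ restricted to this complement is strictly negative definite, $(D^2 H(\bfX^*))^2$ restricted there is strictly positive definite, and one argues the correction term cannot destroy positivity — here I would use the maximality of $\bfX^*$ on $B$ together with the first-order optimality/Lagrange condition coming from the constraint $|\bfX^* - \mathbf{T}| \le R$ (if $\bfX^*$ is interior, $\nabla H(\bfX^*)=0$, contradicting non-existence, so $\bfX^*$ lies on $\partial B$ and $\nabla H(\bfX^*)$ is parallel to $\bfX^* - \mathbf{T}$; that is, $g(\bfX^*)$ points in the radial direction). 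This pins down $g(\bfX^*)$ and lets one control the correction term. Combined with the exact invariance of $f$ along the dilation-translation orbit, a Taylor expansion then gives $f(\bfX) \ge f(\bfX^*)$ for all $\bfX$ near $\bfX^*$ in $\distinct_n$, with the orbit directions flat and the transverse directions strictly convex.

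The main obstacle is the correction term $2\sum_k g_k D^2(\partial_k H)$: a priori it is not sign-definite and could in principle overwhelm $2(D^2 H)^2$ in the transverse directions. Resolving this is where the hypotheses must be used in a coordinated way — the radial structure of $g(\bfX^*)$ forced by $\bfX^* \in \partial B$, the exact (not just infinitesimal) invariance \eqref{eq:invariance grad H} which shows the radial direction itself lies in $\ker(D^2 H(\bfX^*))$ (so $g(\bfX^*) \in \ker(D^2 H(\bfX^*))$, i.e.\ the ``bad'' term is built from the flat directions), and the dimension hypothesis $\mathrm{dim}\,\ker(D^2 H(\bfX^*)) = 1+d$ which guarantees the transverse Hessian of $H$ is as nondegenerate as possible. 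I expect the cleanest route is: (i) show $g(\bfX^*) \in \ker(D^2 H(\bfX^*))$ from \eqref{eq:invariance grad H}; (ii) differentiate that invariance identity once more to show the correction term $\sum_k g_k D^2(\partial_k H)(\bfX^*)$ vanishes on the orbit directions and, when paired against transverse directions, contributes only a controlled amount; (iii) conclude that $D^2 f(\bfX^*)$ is positive semidefinite with kernel exactly the $(1+d)$-dimensional orbit tangent space, and finally upgrade this to a genuine local-minimum statement on $\distinct_n$ by combining the strict transverse convexity with the exact flatness along the orbit.
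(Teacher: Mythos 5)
Your proposal follows essentially the same route as the paper's proof: the KKT condition at the boundary maximiser gives $\nabla H(\bfX^*) = 2\lambda^*(\bfX^*-\mathbf{T})$ with $\lambda^*\ge 0$, the radial direction lies in $\ker(D^2H(\bfX^*))$ so $\nabla f(\bfX^*)=0$, and $D^2f(\bfX^*) = 2(D^2H(\bfX^*))^2 + 2\,D^3H(\bfX^*)\cdot\nabla H(\bfX^*)$ with the orbit directions exactly flat. The ``main obstacle'' you flag dissolves precisely via the identity you propose to use: differentiating $D^2H(\bfX)\bfX=0$ gives $D^3H(\bfX^*)\cdot\bfX^* = -D^2H(\bfX^*)$ (and $D^3H(\bfX^*)\cdot\mathbf{T}=0$), so the correction term equals $-4\lambda^* D^2H(\bfX^*)$, which is positive semidefinite rather than merely ``controlled'', and the conclusion then follows exactly as in your step (iii).
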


\begin{remark}[Assumptions of Theorem \ref{thm:main}]
\label{remark:assumptions}
The assumptions of Theorem \ref{thm:main} are implicit assumptions on the data $\vB$. We discuss each one in turn.

We only consider the case where there does not exists a Laguerre diagram compatible with $(\bfv,\bfB)$ because the other case has already been covered in Theorem \ref{cor:soln to Goal 1}.


The assumption that $\bfX^* \in \distinct_n$, namely that the seeds are distinct, is restrictive and does not hold for all data $\vB \in \mathcal{D}_n$; see Section \ref{subsec: larger perts} for a numerical illustration. This is explained in part in Remark \ref{remark:heuristic}; the first term of $H$ (the optimal transport term) is maximised when all the seeds are in the same place. If the data $\vB$ is such that the first term of $H$ is the dominant term, then we  expect some of the seeds in $\bfX^*$ to coincide.

It can be shown using results from \cite{deGournay2018,Kitagawa_Merigot_Thibert_2019} that $H$ is 2-times continuously differentiable in $\distinct_n$. 
Expressions for the derivatives of the volumes and centroids of Laguerre cells with respect to the generators of the Laguerre tessellation can be read off from, e.g.,
\cite{BirginEtAl2021,BourneRoper2015,deGournay2018,Kitagawa_Merigot_Thibert_2019}.
The assumption that $H$ is 3-times continuously differentiable in a neighbourhood of $\bfX^*$ is `generically' true but may fail to hold if the diagram $\{L_i(\bfX^*;\bfv)\}_{i=1}^n$ is `degenerate' in the sense that there exists adjacent cells that intersect in a set of $\mathcal{H}^{d-1}$-measure zero (e.g., for $d=2$, if four cells meet at a point, such as in a checkerboard configuration). In this case the combinatorics of some of the Laguerre cells (the number of faces, edges, vertices, etc.) may change in a neighbourhood of $\bfX^*$, and $H$ may fail to be 3-times continuously differentiable.

We believe that the assumption $\mathrm{dim}(\mathrm{ker}(D^2 H(\bfX^*)))=1+d$ is also `generically' true, even if it is not always true. Due to the translation- and dilation-invariance of $\nabla H$,  we have $\mathrm{dim}(\mathrm{ker}(D^2 H(\bfX))) \ge 1+d$ for all $\bfX \in \distinct_n$.
To see this, differentiate the expression $\nabla H(\lambda \bfX)=\nabla H(\bfX)$ with respect to $\lambda$ and set $\lambda=1$ to obtain $D^2 H(\bfX) \bfX = 0$. Therefore $\bfX \in \mathrm{ker}(D^2 H(\bfX))$. Similarly, let $e_i \in \mathbb{R}^d$ denote the $i$-th standard basis vector. Then differentiating $\nabla H(\bfX+s(e_i,\ldots,e_i))=\nabla H(\bfX)$ with respect to $s$ and setting $s=0$ gives $(e_i,\ldots,e_i) \in \mathrm{ker}(D^2 H(\bfX))$ for all $i \in \{1,\ldots,d\}$.
Numerical evidence suggests that typically $\mathrm{dim}(\mathrm{ker}(D^2 H(\bfX)))=1+d$, but it is possible to construct examples where $\mathrm{dim}(\mathrm{ker}(D^2 H(\bfX))) > 1+d$. For example, if a Laguerre cell $L_i$ on the boundary of $\Omega$ only has one neighbour $L_j$  (e.g., if $L_i$ is a triangular Laguerre cell in the corner of a square domain), then the seed $x_i$ and the weight $w_i$ can be adjusted appropriately ($x_i \mapsto x_i + s(x_i-x_j)$, $s>0$), 
while keeping the other seeds and weights fixed,
without changing the Laguerre diagram, and hence without changing $\nabla H$. This corresponds to another vector in the nullspace of $D^2 H$.
\end{remark}

\begin{proof}[Proof of Theorem \ref{thm:main}]
Let $\bfX^*=(x_1^*,\ldots,x_n^*) \in B \cap \distinct_n$ be the global maximiser of $H$ 
 given in the statement of Theorem \ref{thm:main}. Note that $\bfX^*$ satisfies $|\bfX^*-\mathbf{T}|=R$ because of the following. If $|\bfX^*-\mathbf{T}|<R$, then $\nabla H(\bfX^*) = 0$ by the optimality of $\bfX^*$ in $B$. But this contradicts the assumption that there does not exists a Laguerre diagram compatible with $(\bfv,\bfB)$. 
 

 The proof is split into three parts. Firstly, we write down the KKT conditions (first-order optimality conditions) for the optimisation problem $\max_B H$, which are satisfied by $\bfX^*$. Secondly, we use these conditions to show that $\nabla f(\bfX^*)=0$. 
 Finally, we show that $\bfX^*$ 
 is a local minimiser of $f$ by checking the second-order, sufficient optimality conditions. 
 
  Let $c_{\mathrm{ball}}:\mathbb{R}^{nd} \to \mathbb{R}$ be the constraint function given by $c_{\mathrm{ball}}(\bfX)=R^2-|\bfX-\mathbf{T}|^2$. Then $B =\{ \bfX \in \mathbb{R}^{nd} : c_{\mathrm{ball}}(\bfX) \ge 0 \}$.
       By assumption, $\bfX^*$ is a minimiser of $-H$ on $B$. The constraint $c_{\mathrm{ball}} \ge 0$ is active at $\bfX^*$, which means that $c_{\mathrm{ball}}(\bfX^*)=0$. Observe that 
       \[
\nabla c_{\mathrm{ball}}(\bfX^*) = -2 (\bfX^*-\mathbf{T}) \ne 0
       \]
       because $\bfX^* \in \distinct_n$.
Therefore $\bfX^*$ satisfies the linear independence constraint qualification (LICQ) (see \cite[Definition 12.4]{NocedalWright}) and so, by the Karush-Kuhn-Tucker Theorem \cite[Theorem 12.1]{NocedalWright}, there exists a Lagrange multiplier $\lambda^* \ge 0$ such that
\begin{equation}
\label{eq:KKT}    
- \nabla H(\bfX^*) = \lambda^* \nabla c_{\mathrm{ball}}(\bfX^*) = - 2 \lambda^* (\bfX^* - \mathbf{T}). 
\end{equation}

Next we show that $\bfX^*$ is a critical point of $f$. By equation \eqref{eq:KKT},
\[
\nabla f(\bfX^*)
= 2 D^2 H(\bfX^*) \nabla H(\bfX^*) = 4 \lambda^* D^2 H(\bfX^*)(\bfX^* - \mathbf{T}) = 0
\]
since $\bfX^*$ and $\mathbf{T}$ belong to $\mathrm{ker}(D^2 H(\bfX^*))$; see Remark \ref{remark:assumptions}. 

Finally, we show that 
$\bfX^*$ is a local minimiser of $f$.
To compute $D^2 f$ we will use the Einstein summation convention (sum over repeated indices), with subscript Roman indices corresponding to seeds and
 superscript Greek indices corresponding to components in $\mathbb{R}^d$. 
For example, $x_i^\alpha$ denotes component $\alpha \in \{1,\ldots,d\}$ of seed $i \in \{1,\ldots,n\}$. For all $i,j \in \{1,\ldots,n\}$ and $\alpha,\beta \in \{1,\ldots,d\}$,
\begin{align}
\nonumber
\frac{\partial f}{\partial x_i^\alpha} & = 
2 \, \frac{\partial^2 H}{\partial x_i^\alpha \partial x_k^\gamma} \,\frac{\partial H}{\partial x_k^\gamma},
\\
\label{eq:D2f}
\frac{\partial^2 f}{\partial x_i^\alpha \partial x_j^\beta} & =
2 \, \frac{\partial^3 H}{\partial x_j^\beta \partial x_i^\alpha \partial x_k^\gamma} \,\frac{\partial H}{\partial x_k^\gamma} + 
2 \, \frac{\partial^2 H}{\partial x_i^\alpha \partial x_k^\gamma} \,
\frac{\partial^2 H}{\partial x_j^\beta \partial x_k^\gamma},
\end{align}
where $k$ is summed over $\{1,\ldots,n\}$ and $\gamma$ is summed over $\{1,\ldots,d\}$. Recall that for all $\bfX \in \distinct_n$, $D^2 H(\bfX) \bfX = 0$, i.e., 
\[
\frac{\partial^2 H}{\partial x_i^\alpha \partial x_k^\gamma} x_k^\gamma = 0.
\]
Differentiating with respect to $x_j^\beta$ gives
\begin{equation}
\label{eq: Humza Yousaf}
\frac{\partial^3 H}{\partial x_j^\beta \partial x_i^\alpha \partial x_k^\gamma} \,x_k^\gamma +
\frac{\partial^2 H}{\partial x_i^\alpha \partial x_j^\beta} =0.
\end{equation}
Evaluating \eqref{eq:D2f} at $\bfX^*$ and using \eqref{eq:KKT} gives
\begin{align*}
\frac{\partial^2 f}{\partial x_i^\alpha \partial x_j^\beta} (\bfX^*) 
& = 
4 \lambda^* \frac{\partial^3 H}{\partial x_j^\beta \partial x_i^\alpha \partial x_k^\gamma} (\bfX^*) \, ((\bfX^*)_k^\gamma - (\mathbf{T})_k^\gamma) + 
2 \, \frac{\partial^2 H}{\partial x_i^\alpha \partial x_k^\gamma}(\bfX^*) \,
\frac{\partial^2 H}{\partial x_j^\beta \partial x_k^\gamma} (\bfX^*)
\\
&  = - 4 \lambda^* \frac{\partial^2 H}{\partial x_i^\alpha \partial x_j^\beta} (\bfX^*) 
 + 
2 \, \frac{\partial^2 H}{\partial x_i^\alpha \partial x_k^\gamma}(\bfX^*) \,
\frac{\partial^2 H}{\partial x_j^\beta \partial x_k^\gamma} (\bfX^*),
\end{align*}
where in the second line we used \eqref{eq: Humza Yousaf} and the fact that $\mathbf{T} \in \mathrm{ker}(D^2 H (\bfX^*)).$ Therefore
\begin{equation}
\label{eq: Nicola Sturgeon}
 D^2 f(\bfX^*) = 
- 4 \lambda^* D^2 H(\bfX^*) + 2 \left( D^2 H(\bfX^*) \right)^2.
\end{equation}
Recall that $\lambda^* \ge 0$ and $D^2 H(\bfX^*)$ is negative semi-definite (since $H$ is concave). Therefore $D^2 f(\bfX^*)$ is positive semi-definite. Moreover, 
if $\bfY \in \mathbb{R}^{nd}$ satisfies $\bfY \cdot D^2 f(\bfX^*) \bfY = 0$, then $\bfY \in \mathrm{ker}(D^2 H(\bfX^*))$, namely $\bfY = a \bfX^* + (b,\ldots,b)$ for some $a \in \mathbb{R}$, $b \in \mathbb{R}^d$; see Remark \ref{remark:assumptions}. But 
\[
f(\bfX^* + \bfY) = 
f((1+a)\bfX^* + (b,\ldots,b)) = f(\bfX^*)
\]
by Theorem \ref{thm:properties H} \ref{Thrm4.2AffineTrans}. Therefore
$\bfZ \cdot D^2 f(\bfX^*) \bfZ > 0$ for all directions $\bfZ \in \mathbb{R}^{nd}$ except for those directions along which $f$ is constant.

Let $\bfZ \in \mathbb{R}^{nd}$ and write $\bfZ = \bfY + \bfY^\perp$, where $\bfY \in \mathrm{ker}(D^2 H(\bfX^*))$ and $\bfY^\perp$ belongs to the orthogonal subspace of $\mathbb{R}^{nd}$. We will show that $f(\bfX^* + \bfZ) \ge f(\bfX^*)$ if $|\bfZ|$ is sufficiently small. We can assume that $\bfY^\perp \ne 0$, otherwise $f(\bfX^* + \bfZ) = f(\bfX^*)$. By Taylor's Theorem there exists $t \in (0,1)$ such that
\[
f(\bfX^* + \bfZ)
= f(\bfX^*) 
+ 
\frac 12 \bfZ \cdot D^2 f(\bfX^* + t \bfZ) \bfZ.
\]
Note that 
\[
\bfZ \cdot D^2 f(\bfX^*) \bfZ
= \bfY^\perp \cdot D^2  f(\bfX^*) \bfY^\perp > 0
\]
since $\bfY^\perp \ne 0$. Since $D^2 f$ is continuous, it follows that $\bfZ \cdot D^2 f(\bfX) \bfZ > 0$ for all $\bfX$ in a sufficiently small neighbourhood of $\bfX^*$. Therefore $f(\bfX^*+ \bfZ) > f(\bfX^*)$ if $|\bfZ|$ is sufficiently small. It follows that $\bfX^*$ is a local minimiser of $f$, as required.
\end{proof}

\begin{remark}[Application of Theorem \ref{thm:main}: Justification of the heuristic from \cite{TefraRowenhurtsDirectParameterEstimateforGPBD}]
\label{remark:heuristic}
A popular heuristic for fitting Laguerre diagrams to volume and centroid data (approximately solving $\min f$) is to take the seeds to be the target centroids, $\bfX=\bfB$; see for example \cite{TefraRowenhurtsDirectParameterEstimateforGPBD}. We give a partial justification for this heuristic. We can decompose $H$ as
\[
H(\bfX) = F(\bfX) + G(\bfX), \quad \textrm{where} \quad G(\bfX) = - \frac{1}{2}\sum_{i=1}^{n} v_i |x_i|^2 + \sum_{i=1}^n v_i x_i \cdot b_i - \frac{1}{2}\int_{\Omega} |x|^2 \, \text{d}x
\]
and $F$ was defined in \eqref{eq:F}.
Consider maximising $H$ over
$B=\{\bfX \in \mathbb{R}^{nd} : |\bfX - \mathbf{T} | \le R\}$, 
where $\mathbf{T}=(\sigma(\Omega),\ldots,\sigma(\Omega))$ and $\sigma(\Omega)$ is the centroid of $\Omega$.
This is equivalent to locally minimising the fitting error $f$ by Theorem \ref{thm:main}. The maximiser of $H$ depends on the competition between the terms $F$ and $G$. It can be shown that the first term, $F$, is maximised when all the seeds coincide and are as far from the centroid of $\Omega$ as possible, namely when $\bfX=(x_1,\ldots,x_1) \in \partial B$. This is because maximising $F$ is equivalent to finding the worst approximation of the Lebesgue measure by a discrete measure in the Wasserstein metric.
On the other hand, the second term, $G$, is globally maximised over $\mathbb{R}^{nd}$ by $\bfX = \bfB$, namely when the seeds are the target centroids. This is the heuristic from \cite{TefraRowenhurtsDirectParameterEstimateforGPBD}. If the second term dominates, then we would expect the maximiser of $H$ (and the minimiser of $f$) to be close to $\bfB$. This partially justifies the heuristic used by \cite{TefraRowenhurtsDirectParameterEstimateforGPBD} and other authors. On the other hand, if the first term dominates, then the maximiser of $H$ may not belong to $\distinct_n$. This partially explains the numerical observations given in Section \ref{subsec: larger perts}.
\end{remark}

\begin{remark}[The set $K$ should be a ball]
\label{remark:Counterexample to the Bourne conjecture}
In Theorem \ref{thm:main} it is important that the set $B$ where we maximise $H$ is a ball. By Theorem \ref{thm:properties H} \ref{Thrm4.2AffineTrans}, if the infimum of $f$ is attained,
we have $\min_{\distinct_n} f = \min_{K \cap \distinct_n} f$ for any compact set $K$ containing a neighbourhood of a point of the form $\mathbf{T}=(t,\ldots,t) \in \mathbb{R}^{nd}$, not just for balls. However, we cannot replace the ball $B$ by an arbitrary compact set $K$ in the statement of Theorem \ref{thm:main}. The fact that $H$ is 1-positively homogeneous is also important. In general, it is not true that the maximiser of a concave function on a compact set is a minimiser of the norm of its gradient. For example, let $g:\mathbb{R}^2 \to \mathbb{R}$ be the concave function $g(x)=-\frac{1}{2} (x_1-2)^2-2(x_2-1)^2$. Let $c:\mathbb{R}^2 \to \mathbb{R}$ be the constraint function $c(x)=1-\left(\frac{x_1}{2}\right)^2-x_2^2$ and $S$ be the ellipse $S=\{ x \in \mathbb{R}^2 : c(x) \ge 0\}$. It can be shown that the global maximiser of $g$ on $S$ is $x^*=(\sqrt{2},1/\sqrt{2}) \in \partial S$, but the global minimiser of the convex function $|\nabla g|^2$ on $S$ is $y^*=(2y_2/(4-3y_2),y_2) \in \partial S$ with $y_2= \tfrac 13 (2-\sqrt{5}+\sqrt{3+2\sqrt{5}})\approx (1.108097,0.832484) $. Note that $y^* \ne x^*$. 
\end{remark}

We include the following theorem since it may be of general interest in convex optimisation. It gives conditions under which minimising a 1-positively homogeneous convex function on a ball is equivalent to locally minimising the norm of its gradient. We do not include its proof since it is very similar to the proof of Theorem \ref{thm:main}.

 \begin{proposition}[Minimising the norm of the gradient of a convex function]
 \label{thm:min g = min |grad g|}
Let $g: \mathbb{R}^{n} \to \mathbb{R}$ be convex.
Suppose that there exists $x_0 \in \mathbb{R}^n$ such that, for all $\lambda >0$, $x \in \mathbb{R}^n$,
\begin{equation}
\label{eq:1-positively Roper homogeneous}    
g(\lambda (x-x_0) + x_0) = \lambda g(x).
\end{equation}
(If $x_0=0$, then $g$ is 1-positively homogeneous.)
Assume that $g$ is continuously differentiable on $\mathbb{R}^n \setminus \{x_0\}$.
Let $R>0$ and
$B_R=\{x \in \mathbb{R}^n : |x - x_0 | \le R\}$. 
Assume that the global minimum of $g$ on $B_R$ is achieved at a point $x^* \in \partial B_R$.
Moreover, assume that $g$ is 3-times continuously differentiable in a neighbourhood of $x^*$ and $\mathrm{ker}(D^2 g(x^*))=\mathrm{span}_\mathbb{R} \{x^*-x_0 \}$. 
Then $x^*$ is a local minimiser of $|\nabla g|$ on  
$\mathbb{R}^n \setminus \{ x_0 \}$.
\end{proposition}

\section{Numerical experiments}
\label{sec: numerics}
In this section we provide numerical experiments to illustrate the theory from Section \ref{sec:main results}.

\subsection{Recovering a Laguerre diagram from the areas and centroids of its cells}
\label{sec: numerics - diagram exists}
First we show how to 
achieve Goal \ref{goal: recovering} using Theorem \ref{thm:properties H}. In particular, we reconstruct a 2D Laguerre diagram given the areas and centroids of its cells by maximising $H$. 

We take $\Omega=[0,1]^2$, $n=20$, we draw the seeds $\bfX_0 \in \Omega^n$ at random from the uniform distribution on $\Omega$, and we set all the weights to be zero, $\bfw_0 = (0,\ldots,0)$, so that $(\bf{X}_0,\bf{w}_0)$ generates a Voronoi tessellation of $\Omega$. For $i \in \{1,\ldots,n\}$, we define $v_i$ to be the area of $\Lag_i(\bfX_0,\bfw_0)$ and $b_i$ to be the centroid of $\Lag_i(\bfX_0,\bfw_0)$.
By Theorem \ref{thm:properties H} \ref{Thrm4.2:HisC1}, we can recover the Voronoi diagram $\{ \Lag_i(\bfX_0,\bfw_0) \}_{i=1}^n$ from $\vB$ by numerically maximising the concave function $H$.

For numerical robustness, since Laguerre diagrams are only defined if the seeds are distinct, we found that it was necessary to maximise $H$ subject to the constraint that the pairwise distance between the seeds is not too small, namely, that $c_{ij}(\bfX) \ge 0$ for all $i,j \in \{1,\ldots n\}$, $i<j$, where $c_{ij}: \mathbb{R}^{nd} \to \mathbb{R}$ are defined by
\[
c_{ij}(\bfX) = |x_i-x_j|^2 - \delta^2,
\]
where we take $\delta=10^{-3}$ in all the experiments below. Without this constraint, an iterative numerical optimisation algorithm may produce an iterate with seeds very close together.
As a very simple example, consider two seeds in one-dimension. If the initial guess 
has the seeds in the wrong order, then any convergent numerical optimisation algorithm would reorder the seeds, which may cause the seeds to get very close together as they swap positions. This could give rise to some numerical instability since it is difficult to compute Laguerre diagrams and solve optimal transport problems when the seeds are very close together. We have observed the analogue of this behaviour in two dimensions.

In addition, we impose the constraint $c_{\textrm{ball}}(\bfX) \ge 0$, where
$c_{\textrm{ball}}: \mathbb{R}^{nd} \to \mathbb{R}$ is defined by
\[
c_{\textrm{ball}}(\bfX) = R^2 - \sum_{i=1}^n |x_i-\sigma(\Omega)|^2, \qquad R = \sqrt{\text{area}(\Omega) \cdot n},
\]
and where $\sigma(\Omega)$ is the centroid of $\Omega$.
This constraint ensures that $\bfX$ lies in the ball in $\mathbb{R}^{nd}$ of radius $R$ and centre $(\sigma(\Omega),\ldots,\sigma(\Omega))$. In particular, it reduces the domain of the optimisation problem to a compact set. This can be done without loss of generality by Lemma \ref{Lemma:Meyron}.

In summary, we recover $\{ \Lag_i(\bfX_0,\bfw_0) \}_{i=1}^n$ from $\vB$ by numerically solving the following constrained optimisation problem:
\begin{equation}
\label{eq:maxHcon}    
\max \left\{ H(\bfX;\vB) : \bfX \in \mathbb{R}^{nd}, \; c_{\textrm{ball}}(\bfX) \ge 0, \; c_{ij}(\bfX) \ge 0 \; \forall \; i,j \in \{1,\ldots,n\}, \, i<j \right\}.
\end{equation}
Note that the maximum value is $0$ by Theorem \ref{thm:properties H} \ref{Thrm4.2C=BthenGradH=0}. As described above, we added the constraints $c_{ij} \ge 0$ for numerical stability. The price to pay is that the optimisation problem is no longer convex (the constraint set is not convex). An alternative approach, to preserve convexity, would be to maximise $H$ over the convex set $\{ c_{\mathrm{ball}} \ge 0\}$ using a method for non-smooth convex optimisation (such as a proximal method) that does not need $\nabla H$ to be well-defined everywhere (note that $H$ can be evaluated in a robust way even if the seeds are not distinct).

We solved the optimisation problem \eqref{eq:maxHcon} in Python.
To compute Laguerre diagrams and solve the semi-discrete optimal transport problem we used the Python library \emph{pysdot} \cite{pysdot} (to compute $\bfw^*(\bfX;\bfv)$). The initial guess for the optimal  
transport solver was generated using the rescaling method from \cite[Section 2.2]{Meyron2019}, which is based on Lemma \ref{Lemma:Meyron}. The optimal transport algorithm (the damped Newton method \cite{Kitagawa_Merigot_Thibert_2019}) was terminated when the percentage error of the areas of the Laguerre cells fell below $0.1\%$, namely, when $\bfw$ was such that
\[
100 \cdot \frac{|m_i(\bfX,\bfw)-v_i|}{v_i} < 0.1 \quad \forall \; i \in \{1,\ldots,n\}.
\]
We used the Python function \emph{scipy.optimize.minimize} to solve the constrained optimisation problem \ref{eq:maxHcon} with a random initial guess $\bfX_{\textrm{init}}$ (drawn from the uniform distribution on $\Omega^{nd}$), with the
optimisation method \emph{SLSQP}, and with the tolerance parameter \emph{ftol} equal to $10^{-10} |H(\bfX_{\textrm{init}})|$.

The results are shown in Figures \ref{fig:diagram exists - initial and final} and \ref{fig:diagram exists - F & H}. 
The optimisation algorithm terminated successfully after $229$ iterations.
Figure \ref{fig:diagram exists - initial and final}, left, shows the Laguerre tessellation $\{ L_i(\bfX_{\textrm{init}};\bfv)\}_{i=1}^n$ generated by the random initial guess $\bfX_{\textrm{init}}$, and Figure \ref{fig:diagram exists - initial and final}, right, shows the final Laguerre tessellation after $229$ iterations. The red dots are the actual centroids, which are almost indistinguishable from the blue dots, the target centroids $\bfB$. 
Figure \ref{fig:diagram exists - F & H} shows the convergence of $H$ and $F$ to $0$.

\begin{figure}
\centering
\includegraphics[width = 0.49\textwidth]{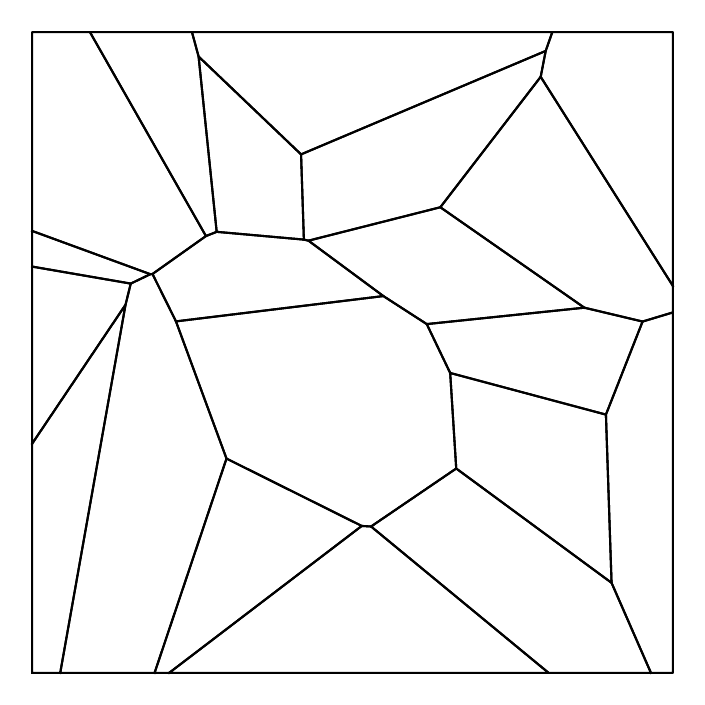}
\includegraphics[width = 0.49\textwidth]{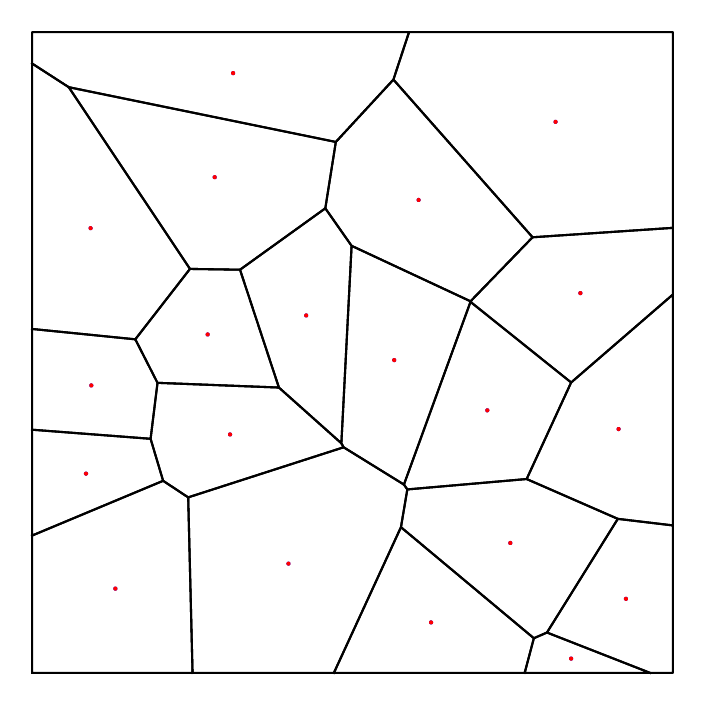}
\caption{Recovering a Laguerre diagram $\{ \Lag_i(\bfX_0,\bfw_0) \}_{i=1}^n$ from the areas $\bfv$ and centroids $\bfB$ of its cells; see Section \ref{sec: numerics - diagram exists}. Left: The Laguerre tessellation $\{ L_i(\bfX_{\textrm{init}};\bfv)\}_{i=1}^n$ with cells of areas $\bfv$ (up to a $0.1\%$ error) generated from a random collection of seeds  $\bfX_{\textrm{init}}$. Right: Numerical approximation of the unique Laguerre tessellation with cells of areas $\bfv$ and centroids $\bfB$. This figure shows the Laguerre tessellation corresponding to an approximate maximiser of the constrained optimisation problem \eqref{eq:maxHcon}, computed by \emph{scipy.optimize.minimize} using the initial guess $\bfX_{\textrm{init}}$ and $229$ iterations. The red dots are the centroids of the computed Laguerre cells. The blue dots (which are almost indistinguishable from the red dots) are the target centroids $\bfB$. The cells have areas $\bfv$ up to a $0.1\%$ error. We have not plotted the true Laguerre diagram $\{ \Lag_i(\bfX_0,\bfw_0) \}_{i=1}^n$ since it is indistinguishable from the computed Laguerre diagram.
}
\label{fig:diagram exists - initial and final}
\end{figure}

\begin{figure}
\centering
\includegraphics[width = 0.49\textwidth]{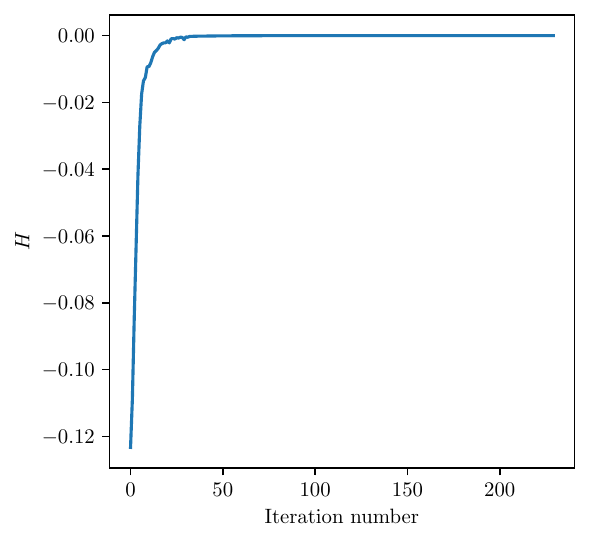}
\includegraphics[width = 0.49\textwidth]{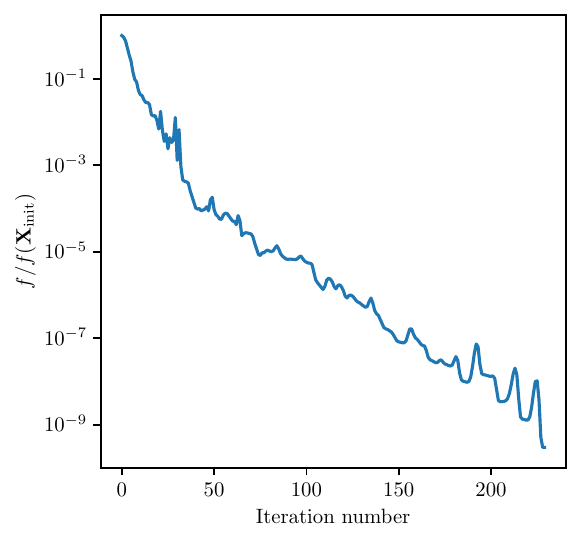}
\caption{Recovering a Laguerre diagram -- convergence of the algorithm from Section \ref{sec: numerics - diagram exists}. Left: The value of the objective function $H$ converges to its maximum value of $0$. 
Right: The nonlinear least squares error $f$ also converges to $0$. }
\label{fig:diagram exists - F & H}
\end{figure}

\subsection{Fitting a Laguerre diagram to synthetic data}
\label{sec:fitting}
In this section we consider Goal \ref{goal: fitting}. We fit a Laguerre tessellation to synthetic data 
that is obtained by perturbing a Voronoi diagram. We created the synthetic data as follows. First we took the Voronoi diagram $\{ \Lag_i(\bfX_0,\bfw_0) \}_{i=1}^n$ from Section \ref{sec: numerics - diagram exists} and computed the areas and centroids of its cells, $\vB$. Then we perturbed each centroid $b_i$ by a random vector \[
u_i^\varepsilon = \varepsilon r_i (\cos \theta_i,\sin \theta_i),
\]
where $r_i$ is drawn from the uniform distribution on $[0,1]$, $\theta_i$ is drawn from the uniform distribution on $[0,2 \pi]$, and $\varepsilon >0$ is the size of the perturbation.
Finally, so that the synthetic data is compatible (belongs to the set $\mathcal{D}_n$), we define
\[
b_i^\varepsilon = b_i + u_i^\varepsilon + \left( \sigma(\Omega) - \frac{1}{\mathrm{area}(\Omega)} \sum_{i=1}^n v_i (b_i + u_i^\varepsilon) \right),
\]
where $\sigma(\Omega)=(1/2,1/2)$ is the centroid of $\Omega=[0,1]^2$. Then $(\bfv,\bfB^\varepsilon) \in \mathcal{D}_n$ is our synthetic data. 

\subsubsection{Small perturbations}
\label{subsec: small}
First we considered a very small perturbation $\varepsilon = 0.001$. We fitted a Laguerre tessellation to the perturbed data $(\bfv,\bfB^\varepsilon)$ by solving the constrained optimisation problem \eqref{eq:maxHcon} as described above in Section \ref{sec: numerics - diagram exists}, with the same values of $\delta$, $R$ and \emph{ftol}, but with the initial guess $\bfX_{\mathrm{init}}=\bfB^\varepsilon$ (see Remark \ref{remark:heuristic} for a justification for using this initial guess). 

The algorithm terminated successfully after $402$ iterations. The results are shown in Figures \ref{fig:small perturb - initial and final}, 
\ref{fig:small perturb - H and F}
and \ref{fig:small perturb - pairwise dist}, left.
Figure \ref{fig:small perturb - initial and final}, left, shows the Laguerre tessellation $\{ L_i(\bfX_{\textrm{init}};\bfv)\}_{i=1}^n$ generated by the initial guess $\bfX_{\textrm{init}}=\bfB^\varepsilon$, and Figure \ref{fig:small perturb - initial and final}, right, shows the final Laguerre tessellation after $402$ iterations. The red dots are the actual centroids and the blue dots are the target centroids $\bfB^\varepsilon$. Since the perturbation is so small, the red and blue dots are almost indistinguishable. There are some minor visible differences between the Laguerre tessellation fitted to the unperturbed data (Figure \ref{fig:diagram exists - initial and final}, right) and the Laguerre tessellation fitted to the perturbed data (Figure \ref{fig:small perturb - initial and final}, right). 

As discussed in Remark \ref{remark:heuristic}, the initial guess $\bfX_{\mathrm{init}}=\bfB^\varepsilon$ is a good approximation of the maximiser of \eqref{eq:maxHcon}, at least in the eyeball metric -- compare Figure \ref{fig:small perturb - initial and final}, left, to Figure \ref{fig:small perturb - initial and final}, right. On the other hand, from Figure \ref{fig:small perturb - H and F} we see that the final value of least squares error $f$ is over three orders of magnitude smaller than $f(\bfX_{\mathrm{init}})$. The objective function $H$ increases from about $-5 \times 10^{-5}$ to $2.5 \times 10^{-4}$. Note that the maximum value of $H$ is positive because there does not exists a Laguerre diagram with cells of areas and centroids $(\bfv,\bfB^\varepsilon)$. Since $H$ can now take positive values, the constraint $c_{\mathrm{ball}} \ge 0$ in \eqref{eq:maxHcon} is necessary to ensure that the objective function is bounded by Theorem \ref{thm:properties H} \ref{Thrm4.2AffineTrans}.

Figure \ref{fig:small perturb - pairwise dist}, left, shows the minimum distance between the seeds at each iteration, normalised by $\delta$, which is the minimum distance between the seeds allowed by the constraint $c_{ij} \ge 0$. The dashed line corresponds to the constraint being active for some $i,j$. We see that the constraint becomes active around iteration $100$, as the seeds rearrange themselves, but by the end of the simulation the seeds are well separated, at least $30$ units of $\delta$ apart. This suggests that the maximum value of $H$ on the ball $\{ c_{\textrm{ball}} \ge 0\}$ is achieved at a point where all the seeds are distinct.

\begin{figure}
\centering
\includegraphics[width = 0.49\textwidth]{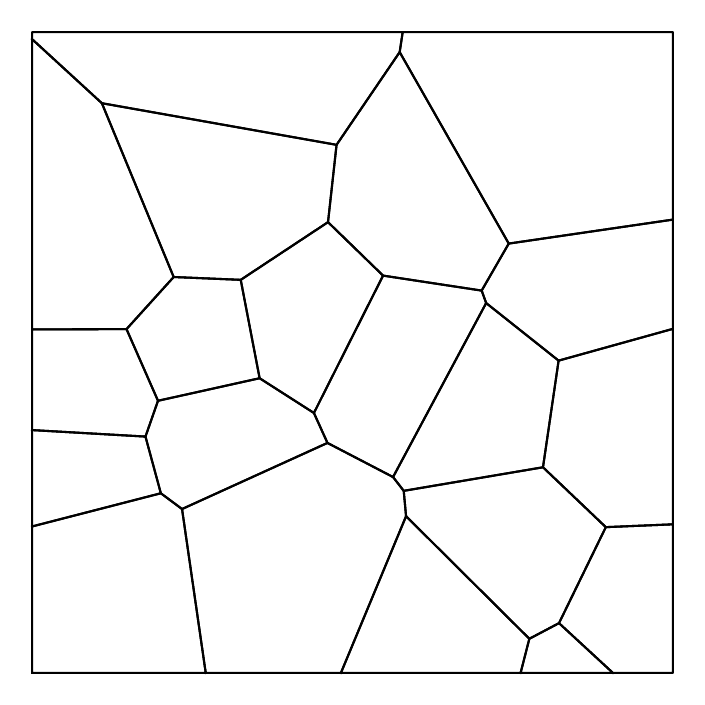}
\includegraphics[width = 0.49\textwidth]{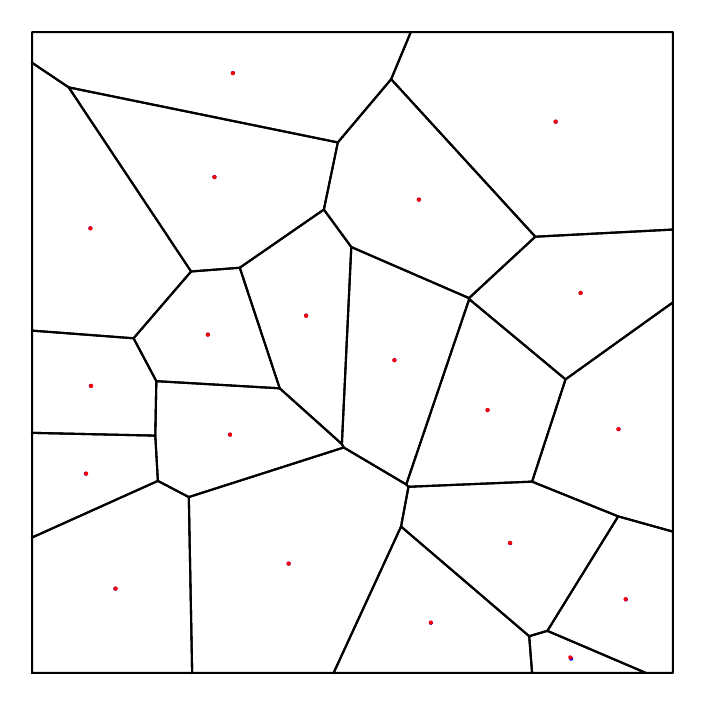}
\caption{Fitting a Laguerre diagram to synthetic data $(\bfv,\bfB^\varepsilon)$ with $\varepsilon=0.001$ (small perturbation); see Section \ref{subsec: small}. Left: The initial guess $\{ L_i(\bfX_{\textrm{init}};\bfv)\}_{i=1}^n$ for the maximiser of \eqref{eq:maxHcon}, where $\bfX_{\mathrm{init}}=\bfB^\varepsilon$.
Right: An approximate maximiser of the constrained optimisation problem \eqref{eq:maxHcon}, computed by \emph{scipy.optimize.minimize} using the initial guess $\bfX_{\textrm{init}}$ and $402$ iterations. The centroids of the Laguerre cells (red dots) are almost indistinguishable from the target centroids (blue dots).
}
\label{fig:small perturb - initial and final}
\end{figure}

\begin{figure}
\centering
\includegraphics[width = 0.49\textwidth]{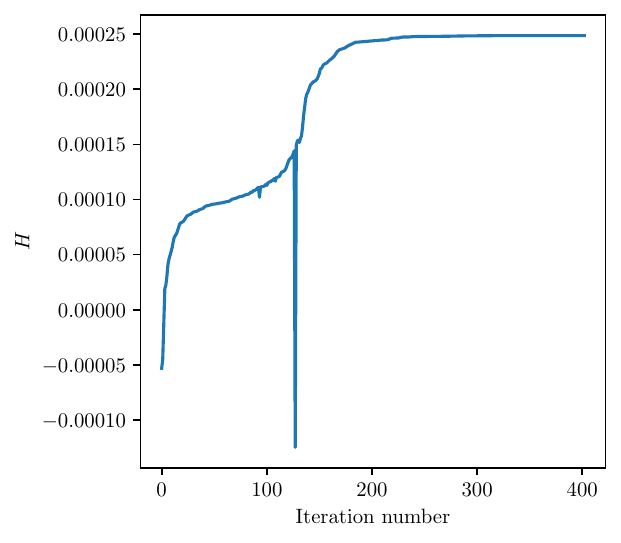}
\includegraphics[width = 0.49\textwidth]{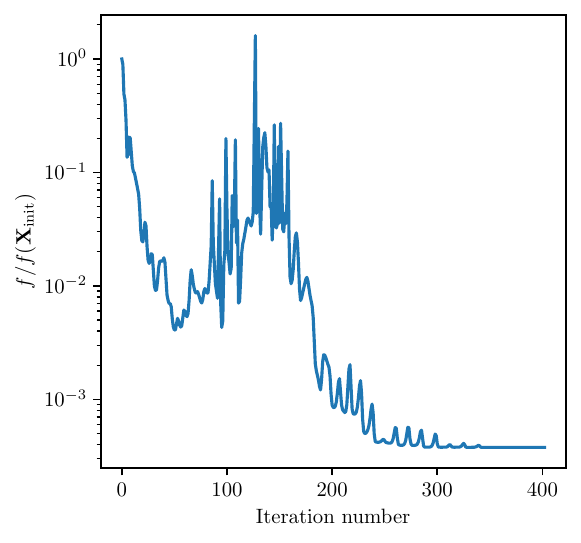}
\caption{Fitting a Laguerre diagram to synthetic data (small perturbation) - convergence of the algorithm from Section \ref{subsec: small}. Left: Convergence of the objective function $H$. It converges to a positive value because there does not exist a Laguerre diagram with cells of areas and centroids $(\bfv,\bfB^\varepsilon)$. Right: Convergence of the least squares error $f$. The $y$-axis shows the ratio of the value of $f$ at each iteration to its initial value $f(\bfX_{\mathrm{init}})$.}
\label{fig:small perturb - H and F}
\end{figure}

\begin{figure}[t]
\centering
\includegraphics[width = 0.49\textwidth]{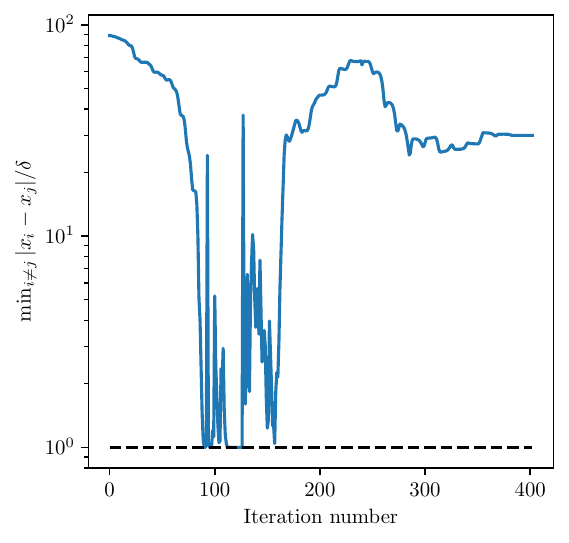}
\includegraphics[width = 0.49\textwidth]{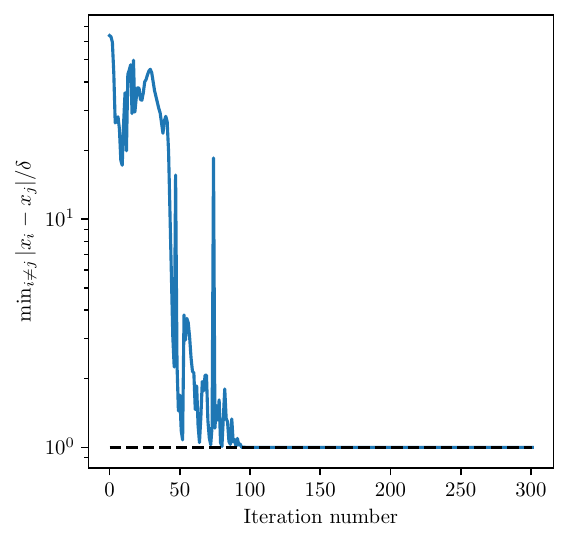}
\caption{Fitting a Laguerre diagram to synthetic data (see Section \ref{sec:fitting}) - minimum distance between the seeds for $\varepsilon=0.001$ (left) and $\varepsilon=0.05$ (right).
 The minimum distance between the seeds is normalised by the minimum allowed distance $\delta=10^{-3}$. The dotted line corresponds to when at least one of the constraints $c_{ij} \ge 0$ is active. Left (small perturbation): At the end of the algorithm none of the constraints are active. The minimum distance between the seeds is at least $30 \delta$. Right (larger perturbation): At the end of the algorithm at least one of the constraints is active.  
 This suggests that if we solved \eqref{eq:maxHcon} without the constraints $c_{ij} \ge 0$, then 
 some of the seeds would collide.}
\label{fig:small perturb - pairwise dist}
\end{figure}

\subsubsection{Larger perturbations}
\label{subsec: larger perts}

We repeated the simulations from the previous section with a slightly larger perturbation, $\varepsilon = 0.05$.
The algorithm terminated successfully after $301$ iterations and the results are shown in Figures \ref{fig:large perturb - initial and final}, 
\ref{fig:large perturb - H and F}
and \ref{fig:small perturb - pairwise dist}, right. 

Figure \ref{fig:large perturb - initial and final} shows the initial (left) and final (right) Laguerre tessellations, with the target centroids $\bfB^\varepsilon$ in blue and the centroids of the fitted diagram in red. This time there is a clear visible difference between the target and actual centroids, although the fit is still quite good. However, the fitted diagram (Figure \ref{fig:large perturb - initial and final}, right) is somewhat `irregular'; some of the cells are rather elongated and there is a tendency for some of the edges of the Laguerre cells to align. This suggests that the methods proposed in this paper of maximising $H$/minimising $f$ may not be entirely suitable for generating synthetic microstructures. It does, however, appear to work well for recovering Laguerre diagrams (Section \ref{sec: numerics - diagram exists}) and fitting Laguerre diagrams to EBSD data (Section \ref{sec:EBSD}). 

The convergence of $H$ and $F$ are illustrated in Figure \ref{fig:large perturb - H and F}. Figure \ref{fig:small perturb - pairwise dist}, right, shows that at least one of the constraints $c_{ij} \ge 0$ is active at the final iteration (and from iteration $100$ onwards). This suggests that, without the constraint $c_{ij} \ge 0$, some of the seeds would collide. In other words, it suggests that the maximum value of $H$ in the set $\{ c_{\textrm{ball}} \ge 0\}$ is achieved at a point where some of the seeds coincide. Therefore the assumptions of Theorem \ref{thm:main} are not satisfied, and there is no rigorous correspondence between maximising $H$ and finding local minimisers of the least squares error $f$. Nevertheless, Figure \ref{fig:large perturb - H and F} suggests that maximising $H$ still works reasonably well at minimising $f$ in practice. Moreover, we repeated this experiment by directly minimising $f$ (using the approach described in the following section, by solving \eqref{eq:minfcon}) and obtained very similar results. 

\begin{figure}[t]
\centering
\includegraphics[width = 0.49\textwidth]{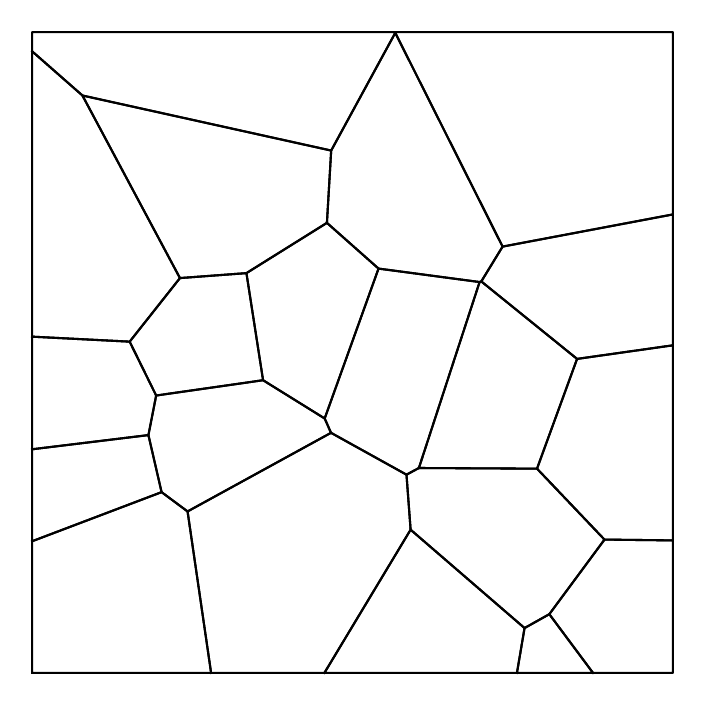}
\includegraphics[width = 0.49\textwidth]{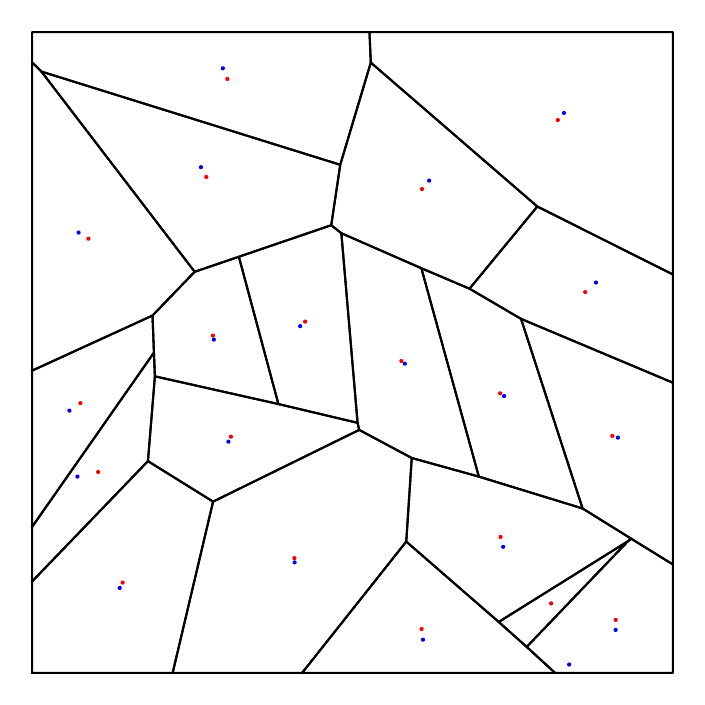}
\caption{Fitting a Laguerre diagram to synthetic data $(\bfv,\bfB^\varepsilon)$ with $\varepsilon=0.05$ (larger perturbation); see Section \ref{subsec: larger perts}.  Left: The initial guess $\{ L_i(\bfX_{\textrm{init}};\bfv)\}_{i=1}^n$ for the maximiser of \eqref{eq:maxHcon}, where $\bfX_{\mathrm{init}}=\bfB^\varepsilon$.
Right: An approximate maximiser of the constrained optimisation problem \eqref{eq:maxHcon}, computed by \emph{scipy.optimize.minimize} using the initial guess $\bfX_{\textrm{init}}=\bfB^\varepsilon$ and $301$ iterations. The centroids of the Laguerre cells are in red and the target centroids are in blue. The centroid error is relatively small, but the Laguerre cells are not very `regular'. In our simulations we found that fitting a diagram to synthetic data tends to produce some elongated cells or cells that are aligned (have almost parallel edges), as seen here.}
\label{fig:large perturb - initial and final}
\end{figure}

\begin{figure}
\centering
\includegraphics[width = 0.49\textwidth]{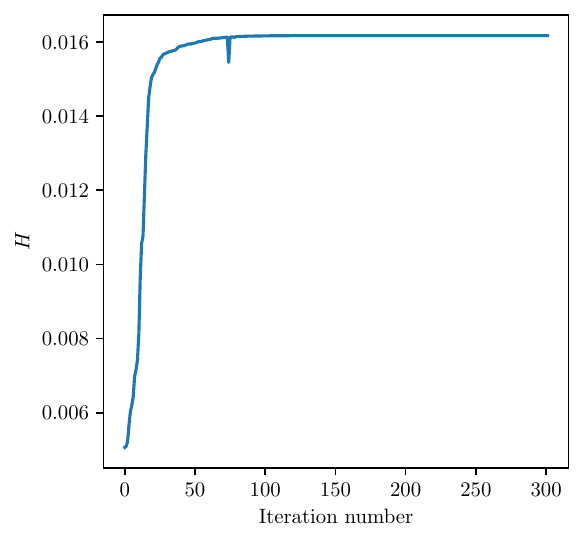}
\includegraphics[width = 0.49\textwidth]{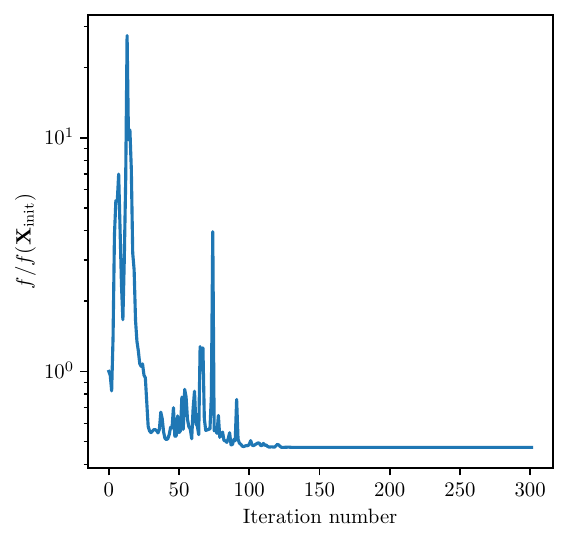}
\caption{Fitting a Laguerre diagram to synthetic data (larger perturbation) - convergence of the algorithm from Section \ref{subsec: larger perts}. Left: Convergence of the objective function $H$. Right: Convergence of the least squares error $f$. The $y$-axis shows the ratio of the value of $f$ at each iteration to its initial value $f(\bfX_{\mathrm{init}})$. We see that the initial guess $\bfX_{\mathrm{init}}=\bfB^\varepsilon$ is very good; the optimisation algorithm only decreases the least squares error $f$ by a factor of about $0.5$ (after initially increasing it significantly).}
\label{fig:large perturb - H and F}
\end{figure}

\subsection{Fitting a Laguerre diagram to EBSD data}
\label{sec:EBSD}

In this section we fit a Laguerre tessellation to an EBSD image provided by Tata Steel Research \& Development.
Figure \ref{fig:EBSD}, top, is an EBSD image of a single-phase steel. The pixels are coloured according to their crystallographic orientation. The orientation map is piecewise constant, and regions of constant orientation are known as \emph{grains}. There are $n=243$ grains in the image. From this image we extracted the areas $\bfv$ and the centroids $\bfB$ of the grains by counting the pixels in each grain.
The domain is $\Omega=[0,252.25]^2$ (measured in microns).

To fit a Laguerre tessellation to the EBSD data $\vB$ we initially tried solving $\eqref{eq:maxHcon}$. However, as in Section \ref{subsec: larger perts}, our experiments suggested that the maximum value of $H$ on the ball $\{ c_{\mathrm{ball}} \ge 0 \}$ is achieved at a point where some of the seeds coincide. Since Theorem \ref{thm:main} does not apply in this case, we decided to directly minimise $f$, rather than maximise $H$. 
To be precise, 
we solved the  constrained optimisation problem 
\begin{equation}
\label{eq:minfcon}    
\min \left\{ \frac{n^2}{\mathrm{area}(\Omega)^3} \, f(\bfX;\vB) \, : \, \bfX \in \mathbb{R}^{nd}, \; c_{ij}(\bfX) \ge 0 \; \forall \; i,j \in \{1,\ldots,n\}, \, i<j \right\}.
\end{equation}
The normalisation factor $n^2/\mathrm{area}(\Omega)^3$ is simply included so that objective function is non-dimensional and scales roughly  constantly with respect to $n$ (this is based on the assumption the cells have roughly equal area).
The constraints $c_{ij} \ge 0$ are included as above for numerical robustness and to stop seeds colliding. 
Since $F$ is invariant under dilations and translations, we do not need to include the constraint $c_{\mathrm{ball}} \ge 0$.

We solved
\eqref{eq:minfcon} using \emph{scipy.optimize.minimize} with the initial guess $\bfX_{\textrm{init}}=\bfB$ and with the tolerance parameter \emph{ftol} equal to 
$10^{-8} n^2 f(\bfX_{\textrm{init}})/\mathrm{area}(\Omega)^3 = 1.6308 \times 10^{-10}$.

The results are shown in Figures \ref{fig:EBSD} and \ref{fig:EBSD - convergence}.
The optimisation algorithm terminated successfully after $714$ iterations.
Figure \ref{fig:EBSD}, bottom-right, shows the Laguerre cells together with the centroids $\bfB$ of the grains from the EBSD image in blue and the centroids of the fitted Laguerre cells in red. Figure \ref{fig:EBSD}, bottom-left, shows the fitted Laguerre diagram overlaid over the EBSD image. 
The quality of the fit is limited by the polygonal shape of the cells; in reality the grains may not be polygonal, as can be seen from the EBSD image in Figure \ref{fig:EBSD} (top).
Nevertheless, there is a clear resemblance, even though we are only minimising the least squares centroid error $f$, rather than the mismatch between the two figures (such as the number of misassigned pixels).
Figure \ref{fig:EBSD - convergence}, left, shows the convergence of the algorithm. The ratio of the initial value of $f$ to the final value of $f$ is $25.2$. Figure \ref{fig:EBSD - convergence}, right, shows the minimum distance between the seeds, normalised by $\delta$ (the minimum distance between the seeds allowed by the constraints $c_{ij} \ge 0$). This suggest that $\inf \{ f(\bfX;\vB) : \bfX \in \distinct_n \}$ is not attained (seeds collide along an infimising sequence), which is why we introduced the constraints $c_{ij} \ge 0$  in \eqref{eq:minfcon}.  

\begin{figure}
\centering
\includegraphics[width = 0.48\textwidth]{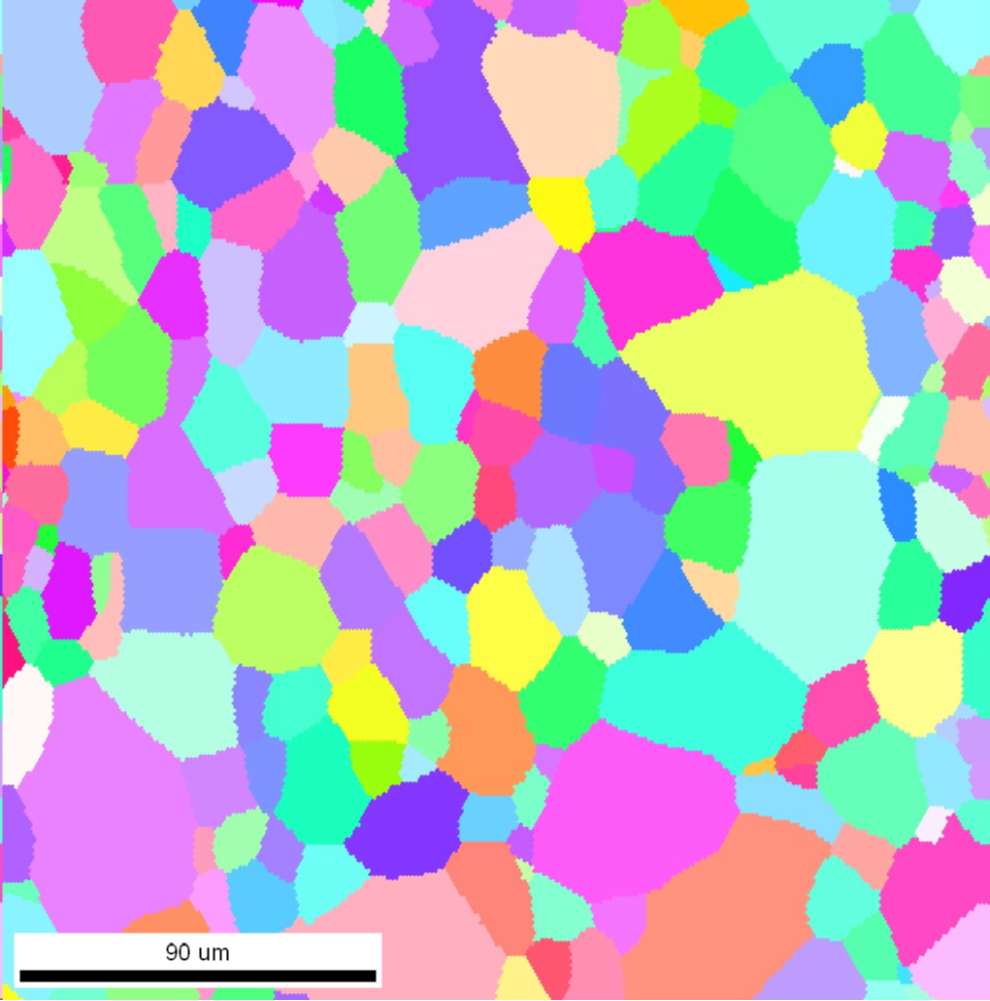}

\vspace{0.3cm}

\includegraphics[width = 0.48\textwidth]{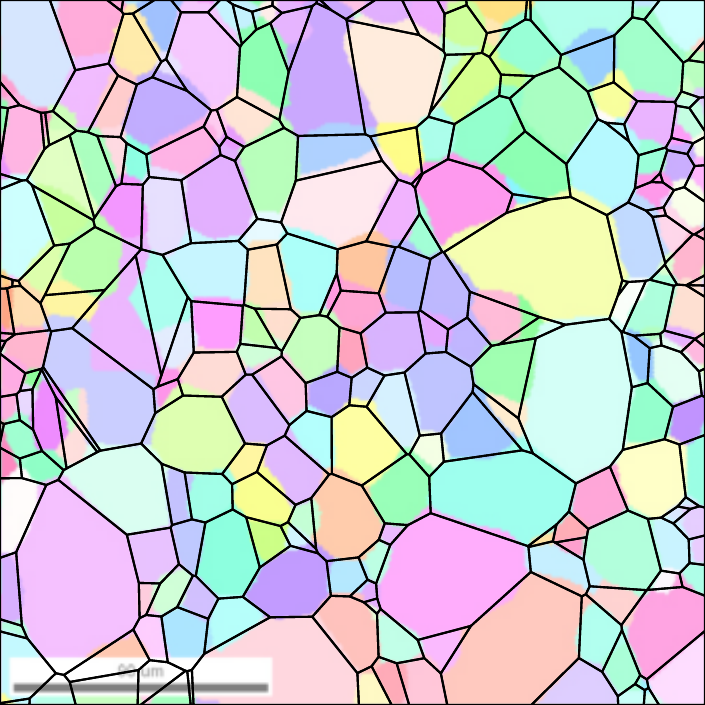}
\includegraphics[width = 0.48\textwidth]{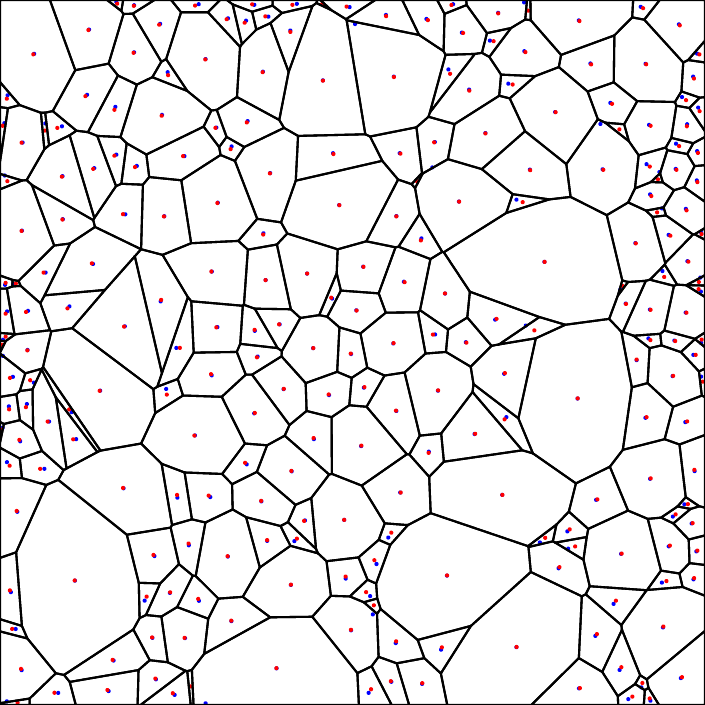}
\caption{Fitting a Laguerre tessellation to an EBSD image; see Section \ref{sec:EBSD}. Top: The original EBSD image of a single-phase steel. The grains are coloured according to the orientation of the crystal lattice. We extracted the areas and centroids $\vB$ of the grains from this EBSD image and then fitted a Laguerre tessellation to the image by solving the optimisation problem \eqref{eq:minfcon}. Bottom-left: The fitted Laguerre diagram overlaid on the EBSD image. Bottom-right: The centroids $\bfB$ from the EBSD data in blue and the centroids of the fitted Laguerre cells in red.}
\label{fig:EBSD}
\end{figure}

\begin{figure}[ht]
\centering
\includegraphics[width = 0.49\textwidth]{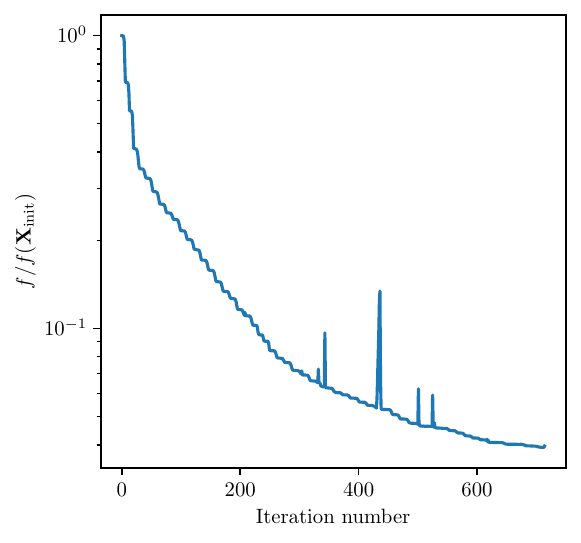}
\includegraphics[width = 0.49\textwidth]{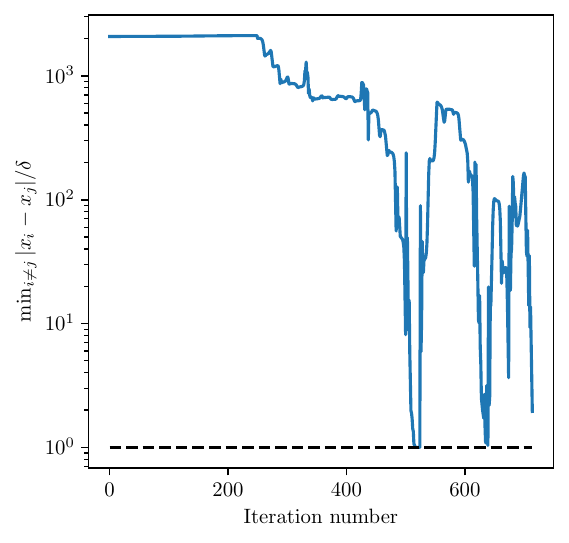}
\caption{Fitting a Laguerre diagram to an EBSD image - convergence of the algorithm from Section \ref{sec:EBSD}. Left: Decay of the objective function $f$, normalised by its initial value $f(\bfX_{\textrm{init}})$. The algorithm terminated after $714$ iterations and the objective function decreased by a factor of about $25$. Right: The minimum distance between the seeds, normalised by the minimum allowed distance $\delta=10^{-3}$. At least one of the constraints $c_{ij} \ge 0$ appears to be active for several iterations (the dotted line corresponds to an active constraint). This suggests that the seeds would collide in the absence of the constraints.}
\label{fig:EBSD - convergence}
\end{figure}

\section{Generalisation to anisotropic Laguerre tessellations}
    \label{sec:aniso}
    In this section we generalise some of our results to \emph{anisotropic Laguerre tessellations} (also known as \emph{anisotropic power diagrams} or \emph{generalised balanced power diagrams}), which have recently been used to model polycrystalline materials, e.g., \cite{altendorf20143d,ABGLP15,VBWBSJ16,TefraRowenhurtsDirectParameterEstimateforGPBD,PFWKS21,AFGK23,AFGK23B,jung2024analytical,BFRSB24}. 

    \begin{definition}[Anisotropic Laguerre tessellations] 
    Let $\mathbf{A} = (A_1,\ldots,A_n)$ be a tuple of $d$-dimensional, symmetric, positive-definite matrices $A_i \in \mathbb{R}^{d \times d}$. Let 
    $\bfX\in\distinct_n$
    and $\bfw \in \RR^{n}$. 
    The $i^{\text{th}}$ \emph{anisotropic Laguerre cell} generated by $(\bfX,\bfw,\mathbf{A})$ is defined by
    \begin{equation}
        \Lag^{\mathbf{A}}_i(\bfX,\mathbf{w}) 
        = \{x \in \Omega: |x-x_i|_{A_i}^2 - w_i \leq |x-x_j|_{A_j}^2 - w_j\; \forall\, j \in \{1,\ldots,n\}\}, 
    \end{equation}
        where $|\cdot|_{A_i}$ denotes the norm generated by $A_i$, which is defined by $|x|_{A_i} = (x \cdot A_i x)^{1/2}$ for all $x \in \mathbb{R}^d$. The \emph{anisotropic Laguerre tessellation} generated by $(\bfX,\bfw,\mathbf{A})$ is the collection of cells $\{\Lag^{\mathbf{A}}_i(\bfX,\bfw)\}_{i=1}^n$.
    \end{definition}

    The standard Laguerre 
    tessellation is
    recovered by setting $A_i = I_{d}$ for all $i$, where $I_d$ is the $d \times d$ identity matrix. The matrices $A_i$ allow for some control over the aspect ratios of the cells. 
    If $d=2$ each edge of an anisotropic Laguerre cell is an arc of a conic section.

We briefly describe how Goal \ref{goal: recovering} can be addressed in the anisotropic setting, namely how to recover an anisotropic Laguerre tessellation given the volumes and centroids of its cells and the anisotropy matrices $\mathbf{A}$. We also state a uniqueness result.

    Define the anisotropic optimal transport problem
     \begin{equation} 
     \mathcal{T}_\mathbf{A}(\bfX,\bfv)
     = \underset{T \in \mathcal{A}(\bfX,\bfv)}{\operatorname{inf}} \sum_{i=1}^{n}\int_{T^{-1}(\{x_i\})} |x-x_i|^2_{A_i} \, \mathrm{d}x,
    \end{equation}
    where $\mathcal{A}(\bfX,\bfv)$ is the set of admissible transport maps, which was defined in equation \eqref{eq:admissible maps}.
    As above, it is well-known that the optimal map $T^*:\Omega \to \{x_1,\ldots,x_n\}$ partitions the domain $\Omega$ into anisotropic Laguerre cells, namely, there exists $\bfw = \bfw^{\star}(\bfX;\bfv,\mathbf{A}) \in \mathbb{R}^n$ such that $(T^*)^{-1}(\{ x_i \}) = \Lag^{\mathbf{A}}_i(\bfX,\mathbf{w})$ for all $i$. The vector $\bfw^{\star}(\bfX;\bfv,\mathbf{A})$ maximises the continuously differentiable, concave dual function
    $\mathcal{K}_{\mathbf{A}} : \mathbb{R}^n \to \mathbb{R}$,
    \[
    \mathcal{K}_{\mathbf{A}}(\bfw) = \sum_{i=1}^{n} \int_{\Lag_i^{\mathbf{A}}(\bfX,\bfw)} (|x-x_i|_{A_i}^2 - w_i) \,\mathrm{d}x + \sum_{i=1}^n w_i\tvoli_i,
    \]
    which has gradient given by
    \[
    \frac{\partial \mathcal{K}_{\mathbf{A}}}{\partial w_i}(\bfw) = v_i - \mathrm{vol} (\Lag_i^{\mathbf{A}}(\bfX,\bfw)) \quad \forall \; i \in \{1,\ldots,n\}.
    \]
    In particular, 
    \[
    \mathrm{vol} \big( \Lag_i^{\mathbf{A}}(\bfX,\bfw^{\star}(\bfX;\bfv,\mathbf{A})) \big) = v_i \quad \forall \; i \in \{1,\ldots,n\}.
    \]
    These results essentially follow from \cite[Proposition 37, Theorem 40]{MerigotThibertOT}. (A technical remark for experts in optimal transport theory: anisotropic Laguerre cells intersect in a set of Lebesgue-measure zero and hence the theory from \cite{MerigotThibertOT} applies even though the anisotropic transport cost $c:\Omega \to \{ x_1,\ldots,x_n \}$ given by $c(x,x_i)=|x-x_i|_{A_i}^2$ does not satisfy the assumption of being \emph{twisted} in general.)
We define $\volcell^{\mathbf{A}}_i(\bfX; \bfv)$
to be the $i^{\text{th}}$ anisotropic Laguerre cell in the tessellation with seeds $\bfX$, cell volumes $\bfv$ and anisotropy matrices $\mathbf{A}$, and $\cent_i(\bfX;\bfv,\mathbf{A})$ to be its centroid:
\[
\volcell^{\mathbf{A}}_i(\bfX; \bfv) = \Lag_i(\bfX,\bfw^{\star}(\bfX;\bfv,\mathbf{A})),  
\qquad          
\cent_i(\bfX;\bfv,\mathbf{A})  = \sigma \big( \Lag_i(\bfX,\bfw^{\star}(\bfX;\bfv,\mathbf{A})) \big).  
\]

The following theorem, which generalises Theorem \ref{thm:properties H} \ref{Thrm4.2concave},\ref{Thrm4.2:HisC1}, shows how an anisotropic Laguerre tessellation can be reconstructed from the volumes and centroids of its cells, given the anisotropy matrices $A_i$, 
by maximising a concave function.

\begin{theorem}[Recovering an anisotropic Laguerre tessellation]
\label{thm:aniso}
Given $\vB \in \mathcal{D}_n$ and a tuple of symmetric positive-definite matrices $\mathbf{A} = (A_1,\ldots,A_n) \in (\mathbb{R}^{d \times d})^n$, define the function $H_{\mathbf{A}}:\RR^{nd} \rightarrow \RR$ by
\begin{equation}
   \label{eq:H_A}
H_{\mathbf{A}}(\bfX) = H_{\mathbf{A}}(\bfX;\vB) 
= 
\frac{1}{2} \mathcal{T}_\mathbf{A}(\bfX,\bfv)
- \frac{1}{2}\sum_{i=1}^{n} v_i |x_i|^2_{A_i} + \sum_{i=1}^{n} v_i x_i \cdot (A_i b_i).
\end{equation}
If $\bfX \in \distinct_n$, then
\[
H_{\mathbf{A}}(\bfX) = \frac 12 \sum_{i=1}^n \int_{\volcell^{\mathbf{A}}_i(\bfX; \bfv)} |x-x_i|_{A_i}^2 \, \mathrm{d}x
- \frac{1}{2}\sum_{i=1}^{n} v_i |x_i|^2_{A_i} + \sum_{i=1}^{n} v_i x_i \cdot (A_i b_i).
\]
Then $H_{\mathbf{A}}$ is concave, continuously differentiable on the set of distinct seeds, $H_{\mathbf{A}} \in C^1(\mathbb{D}_n)$, 
and for all $\bfX \in \mathbb{D}_n$ the gradient of $H_{\mathbf{A}}$ is given by
\begin{equation}
\label{eq:grad H_A}    
\frac{\partial H_{\mathbf{A}}}{\partial x_i}(\bfX) = v_i A_i(b_i - c_i(\bfX;\bfv,\mathbf{A})) \quad \forall \; i \in \{1,\ldots,n\}.
\end{equation}
In particular, $\nabla H_{\mathbf{A}}(\bfX) = 0$ if and only if 
$\bfX$ generates an anisotropic Laguerre tessellation with anisotropy matrices $A_i$ and cells of volume $v_i$ and centroids $b_i$.
\end{theorem}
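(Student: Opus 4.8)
The plan is to follow the proof of Theorem \ref{thm:properties H}\ref{Thrm4.2Hconcave},\ref{Thrm4.2:HisC1} almost verbatim, with $|\cdot|$ replaced by the anisotropic norms $|\cdot|_{A_i}$; the only genuinely new ingredient is an anisotropic analogue of Proposition \ref{Thm:MSS} (semi-concavity and the derivative formula for $\mathcal{T}_{\mathbf{A}}$). Write $F_{\mathbf{A}}(\bfX) = \tfrac12 \mathcal{T}_{\mathbf{A}}(\bfX,\bfv)$, so that $H_{\mathbf{A}}(\bfX) = F_{\mathbf{A}}(\bfX) - \tfrac12\sum_{i=1}^n v_i |x_i|_{A_i}^2 + \sum_{i=1}^n v_i x_i\cdot(A_i b_i)$, where the last summand is linear in $\bfX$ (using that $A_i$ is symmetric). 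Hence it suffices to establish: (a) $\bfX \mapsto F_{\mathbf{A}}(\bfX) - \tfrac12\sum_i v_i|x_i|_{A_i}^2$ is concave on $\mathbb{R}^{nd}$; (b) $F_{\mathbf{A}} \in C^1(\distinct_n)$ with $\partial F_{\mathbf{A}}/\partial x_i = v_i A_i(x_i - c_i(\bfX;\bfv,\mathbf{A}))$; and (c) the integral representation $\mathcal{T}_{\mathbf{A}}(\bfX,\bfv) = \sum_i \int_{\volcell^{\mathbf{A}}_i(\bfX;\bfv)} |x-x_i|_{A_i}^2\,\mathrm{d}x$ on $\distinct_n$. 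From (a)--(c), the concavity of $H_{\mathbf{A}}$, its stated integral form, and the gradient formula \eqref{eq:grad H_A} all follow at once.

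For (a): fix any $T \in \mathcal{A}(\bfX,\bfv)$ and expand $|x-x_i|_{A_i}^2 = x\cdot A_i x - 2 x_i\cdot A_i x + x_i\cdot A_i x_i$ inside $\sum_i \int_{T^{-1}(\{x_i\})} |x-x_i|_{A_i}^2\,\mathrm{d}x$. Since $\vol(T^{-1}(\{x_i\})) = v_i$, the term $x_i\cdot A_i x_i$ contributes exactly $v_i|x_i|_{A_i}^2$, the cross term is linear in $x_i$, and the term $x\cdot A_i x$ is independent of $\bfX$; thus $\bfX \mapsto \big(\sum_i \int_{T^{-1}(\{x_i\})}|x-x_i|_{A_i}^2\,\mathrm{d}x\big) - \sum_i v_i|x_i|_{A_i}^2$ is affine in $\bfX$. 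Taking the infimum over $T \in \mathcal{A}(\bfX,\bfv)$ exhibits $2F_{\mathbf{A}}(\bfX) - \sum_i v_i|x_i|_{A_i}^2$ as an infimum of affine functions, hence concave. (The same expansion, together with boundedness of $\Omega$ and $\mathcal{A}(\bfX,\bfv)\ne\emptyset$, shows $\mathcal{T}_{\mathbf{A}}$ is finite-valued on $\mathbb{R}^{nd}$.)

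For (b) I will use the Danskin/envelope argument as in \cite{MerigotSantambrogioSarrazin}. Fix $\bfX \in \distinct_n$ and let $T^*$ be the optimal transport map from $\ldomega$ to $\nu(\bfX,\bfv)$, which by the theory recalled above partitions $\Omega$ (up to a null set) into the anisotropic Laguerre cells $\volcell^{\mathbf{A}}_i(\bfX;\bfv)$, and which is unique almost everywhere. For $\bfX'$ near $\bfX$ (so $\bfX' \in \distinct_n$), the map sending $\volcell^{\mathbf{A}}_i(\bfX;\bfv)$ to $x_i'$ is admissible for $(\bfX',\bfv)$, so $\mathcal{T}_{\mathbf{A}}(\bfX',\bfv) \le \Phi(\bfX') := \sum_i \int_{\volcell^{\mathbf{A}}_i(\bfX;\bfv)} |x - x_i'|_{A_i}^2\,\mathrm{d}x$, with equality at $\bfX' = \bfX$. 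Since $\Phi$ is smooth with $\partial\Phi/\partial x_i'(\bfX) = 2 v_i A_i(x_i - c_i(\bfX;\bfv,\mathbf{A}))$, the affine function $\tfrac12\Phi(\cdot) - \tfrac12\sum_i v_i|x_i'|_{A_i}^2$ supports the concave function from (a) at $\bfX$, so its gradient there is a supergradient. Uniqueness of $T^*$ (hence of the centroids $c_i(\bfX;\bfv,\mathbf{A})$) makes this supergradient unique, so the concave function is differentiable at $\bfX$; being finite and differentiable at every point of the open set $\distinct_n$, it is automatically $C^1$ there (e.g.\ \cite[Theorem 25.5]{Rockafellar}). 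This gives (b), and then $\partial H_{\mathbf{A}}/\partial x_i = v_i A_i(x_i - c_i(\bfX;\bfv,\mathbf{A})) - v_i A_i x_i + v_i A_i b_i = v_i A_i(b_i - c_i(\bfX;\bfv,\mathbf{A}))$. Part (c) is then immediate from $(T^*)^{-1}(\{x_i\}) = \volcell^{\mathbf{A}}_i(\bfX;\bfv)$, and the final equivalence follows because each $A_i$ is positive definite (hence invertible) and $v_i>0$, so $\nabla H_{\mathbf{A}}(\bfX) = 0 \iff c_i(\bfX;\bfv,\mathbf{A}) = b_i$ for all $i$, which — since $\volcell^{\mathbf{A}}_i(\bfX;\bfv)$ already has volume $v_i$ by construction — is precisely the assertion that $\bfX$ generates an anisotropic Laguerre tessellation with matrices $A_i$, volumes $v_i$ and centroids $b_i$. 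The only step that differs from the isotropic case, and hence the point deserving the most care, is the almost-everywhere uniqueness of $T^*$ underpinning (b); this is exactly the content of the technical remark above that anisotropic Laguerre cells meet in sets of Lebesgue measure zero, so the optimal transport theory of \cite{MerigotThibertOT} applies despite the cost $c(x,x_i)=|x-x_i|_{A_i}^2$ not being twisted. Everything else is a routine transcription of the isotropic arguments.
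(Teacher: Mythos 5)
Your proposal is correct and follows essentially the same route as the paper: concavity via writing $\mathcal{T}_{\mathbf{A}}(\cdot,\bfv)-\sum_i v_i|x_i|_{A_i}^2$ as an infimum of affine functions of $\bfX$, and the gradient formula by exhibiting $\big(v_iA_i(b_i-c_i(\bfX;\bfv,\mathbf{A}))\big)_i$ as a supergradient through comparison with the suboptimal transport plan induced by the cells at $\bfX$ (the paper's displayed inequality is exactly your envelope estimate $\mathcal{T}_{\mathbf{A}}(\bfY,\bfv)\le\Phi(\bfY)$). The one step you assert rather than prove — that this supergradient is the \emph{only} element of the superdifferential, so that differentiability and then $C^1$ regularity follow — is likewise only deferred in the paper's sketch (to the argument of \cite[Theorem 40]{MerigotThibertOT}), so your treatment matches the paper's level of detail; the paper additionally records an alternative derivation of the gradient via Kantorovich duality, which you do not need.
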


 Note that $H_\mathbf{A}$ does not contain the constant term $-\frac 12 \int_\Omega |x|^2 \, \mathrm{d}x$ that is present in the definition of $H$ (compare equations \eqref{eq:H} and \eqref{eq:H_A}). This is because 
\[
\sum_{i=1}^n \int_{\volcell^{\mathbf{A}}_i(\bfX; \bfv)} |x|_{A_i}^2 \, \mathrm{d}x \ne \int_\Omega |x|^2 \, \mathrm{d}x
\]
in general (unless $A_i=I_d$ for all $i$). Consequently the analogue of Theorem \ref{thm:properties H} \ref{Thrm4.2H:cents} does not hold, namely, the maximum value of $H_\mathbf{A}$ is not zero if the data $\bfv$ and $\bfB$ are the volumes and centroids of an anisotropic Laguerre tessellation.

\begin{proof}[Sketch proof of Theorem \ref{thm:aniso}] We just sketch the proof for brevity and because it is similar to the proof of Theorem \ref{thm:properties H}. The proof that $H_{\mathbf{A}}$ is concave is exactly the same as the proof of Theorem \ref{thm:properties H} \ref{Thrm4.2concave}. We derive the gradient of $H_{\mathbf{A}}$ as follows.
Let $\bfX \in \distinct_n$ and $\bfY \in \mathbb{R}^{nd}$. Then
\begin{align*}
 H_\mathbf{A}(\bfY)
& \ge 
\frac 12 \sum_{i=1}^n \int_{\volcell^{\mathbf{A}}_i(\bfX; \bfv)} |x-y_i|_{A_i}^2 \, \mathrm{d}x
- \frac{1}{2}\sum_{i=1}^{n} v_i |y_i|^2_{A_i} + \sum_{i=1}^{n} v_i y_i \cdot (A_i b_i)
\\
& = 
\frac 12 \sum_{i=1}^n \int_{\volcell^{\mathbf{A}}_i(\bfX; \bfv)} |x-x_i|_{A_i}^2 \, \mathrm{d}x
- \frac{1}{2}\sum_{i=1}^{n} v_i |x_i|^2_{A_i} 
+ \sum_{i=1}^{n} v_i x_i \cdot (A_i b_i)
\\
& \qquad +
\sum_{i=1}^{n} v_i (y_i - x_i) \cdot (A_i b_i)
+
\sum_{i=1}^n \int_{\volcell^{\mathbf{A}}_i(\bfX; \bfv)} (x_i-y_i) \cdot (A_i x) \, \mathrm{d}x 
\\
& =  
H_\mathbf{A}(\bfX)
+ \sum_{i=1}^n (y_i-x_i) \cdot v_iA_i (b_i -   c_i(\bfX;\bfv,\mathbf{A})).
\end{align*}  
In particular, the superdifferential of the concave function $H_{\mathbf{A}}$ at $\bfX$ contains the point $\big( v_iA_i (b_i -   c_i(\bfX;\bfv,\mathbf{A})) \big)_{i=1}^n$. Similarly to the proof of \cite[Theorem 40]{MerigotThibertOT}, it can be shown that this is the only point in the superdifferential and that it depends continuously on $\bfX$. Therefore $H_\mathbf{A}$ is continuously differentiable on $\distinct_n$ with gradient given by \eqref{eq:grad H_A}, as required.

Alternatively, the gradient of $H_\mathbf{A}$ can be derived as follows. Let $\bfX \in \distinct_n$. By using the Kantorovich Duality Theorem from optimal transport theory, we can write
\begin{align*}
H_\mathbf{A}(\bfX) 
& = \max_{\bfw \in \mathbb{R}^n}
\left( 
\frac 12 \int_{\Omega} \min_i (|x-x_i|_{A_i}^2 - w_i) \, \mathrm{d}x + \sum_{i=1}^n v_i w_i
- \frac{1}{2}\sum_{i=1}^{n} v_i |x_i|^2_{A_i} + \sum_{i=1}^{n} v_i x_i \cdot (A_i b_i)
\right)
\\
& =: \max_{\bfw \in \mathbb{R}^n} G_{\mathbf{A}}(\bfX,\bfw).
\end{align*}
Then it can be shown that
\begin{equation}
\label{eq:Taylor}
\frac{\partial H_{\mathbf{A}}}{\partial x_i}(\bfX)
= \frac{\partial G_{\mathbf{A}}}{\partial x_i}(\bfX,\bfw^{\star}(\bfX;\bfv,\mathbf{A})),
\end{equation}
where $\bfw^{\star}(\bfX;\bfv,\mathbf{A}) \in \mathbb{R}^n$ is any maximiser of $G_{\mathbf{A}}(\bfX,\cdot \,)$. Differentiating $G_\mathbf{A}$ gives
\begin{align}
\nonumber
\frac{\partial G_{\mathbf{A}}}{\partial x_i}(\bfX,\bfw)
& = 
\frac 12 \int_{\Omega} \frac{\partial}{\partial x_i} \min_k (|x-x_k|_{A_k}^2 - w_k) \, \mathrm{d}x 
- v_i A_i x_i + v_i A b_i
\\
& = 
\frac 12 \sum_{k=1}^n \int_{\Lag_k^\mathbf{A}(\bfX,\bfw)} \frac{\partial}{\partial x_i} (|x-x_k|_{A_k}^2 - w_k) \, \mathrm{d}x 
- v_i A_i x_i + v_i A b_i
\\
\label{eq:Swift}
& = \int_{\Lag_i^\mathbf{A}(\bfX,\bfw)} A_i(x_i - x) \, \mathrm{d}x 
- \sum_{i=1}^n v_i A_i x_i + \sum_{i=1}^n v_i A b_i
\end{align}
for any $\bfw \in \mathbb{R}^n$.
Combining \eqref{eq:Taylor} and \eqref{eq:Swift} gives the gradient of $H_\mathrm{A}$, as claimed.
\end{proof}

The following result generalises Theorem \ref{thm:uniqueness}.    
     \begin{theorem}[Uniqueness of anisotropic compatible diagrams]
     Let $(\bfv,\bfB) \in \mathcal{D}_n$
         and let $\mathbf{A} = (A_1,\ldots,A_n) \in (\mathbb{R}^{d \times d})^n$ be a tuple of symmetric positive-definite matrices.
            Suppose $\bfX, \bfY \in \distinct_{n}$ are such that
            $\{ L^{\mathbf{A}}_i(\bfX;\bfv) \}_{i=1}^n$ and $\{ L^{\mathbf{A}}_i(\bfY;\bfv) \}_{i=1}^n$ are compatible with the data $(\bfv,\bfB)$ in the sense that
            \[
                \cent_i(\bfX;\bfv,\mathbf{A}) = \cent_i(\bfY;\bfv,\mathbf{A}) = b_i 
                \quad \forall \; i \in \{1,\ldots,n\}.
            \]
            Then
            \begin{equation*}
                L^{\mathbf{A}}_i(\bfX;\bfv) = L^{\mathbf{A}}_i(\bfY;\bfv)  \quad \forall \; i \in \{1,\ldots,n\}.
            \end{equation*}
            In other words, an anisotropic Laguerre tessellation is uniquely determined by the volumes and centroids of its cells and its anistropy matrices.
            \end{theorem}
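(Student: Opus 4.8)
\textit{Proof proposal.} The plan is to adapt the proof of Theorem~\ref{thm:uniqueness}: from the cells of one diagram and the seeds of the other we build a ``cross'' transport map, show it is optimal, and then invoke almost-everywhere uniqueness of optimal transport maps to conclude that the two diagrams coincide. The one new feature compared with the isotropic case is that $\sum_i \int_{L^{\mathbf{A}}_i} x\cdot A_i x\,\mathrm{d}x$ is \emph{not} independent of the partition (this is exactly the obstruction flagged in the remark following Theorem~\ref{thm:aniso}, where it is noted that $\sum_i\int_{L^{\mathbf{A}}_i}|x|^2_{A_i}\,\mathrm{d}x\neq\int_\Omega|x|^2\,\mathrm{d}x$ in general). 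Consequently a single one-sided comparison of costs will not close the argument, and one must run the comparison in both directions and subtract.

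Concretely, let $T^{*}_{\bfX}(x)=x_i$ on $\operatorname{int} L^{\mathbf{A}}_i(\bfX;\bfv)$ and $T^{*}_{\bfY}(x)=y_i$ on $\operatorname{int} L^{\mathbf{A}}_i(\bfY;\bfv)$ be the optimal transport maps for $\mathcal{T}_{\mathbf{A}}(\bfX,\bfv)$ and $\mathcal{T}_{\mathbf{A}}(\bfY,\bfv)$, and define the cross maps $S(x)=x_i$ on $\operatorname{int} L^{\mathbf{A}}_i(\bfY;\bfv)$ and $\widetilde{S}(x)=y_i$ on $\operatorname{int} L^{\mathbf{A}}_i(\bfX;\bfv)$. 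Since both families of cells have volumes $v_i$, we have $S\in\mathcal{A}(\bfX,\bfv)$ and $\widetilde{S}\in\mathcal{A}(\bfY,\bfv)$, so
\[
\sum_i \int_{L^{\mathbf{A}}_i(\bfY;\bfv)} |x-x_i|_{A_i}^2\,\mathrm{d}x \;\ge\; \mathcal{T}_{\mathbf{A}}(\bfX,\bfv),
\qquad
\sum_i \int_{L^{\mathbf{A}}_i(\bfX;\bfv)} |x-y_i|_{A_i}^2\,\mathrm{d}x \;\ge\; \mathcal{T}_{\mathbf{A}}(\bfY,\bfv).
\]
Expanding $|x-z|_{A_i}^2 = x\cdot A_i x - 2\,z\cdot A_i x + z\cdot A_i z$ in every integral, using that $A_i$ is symmetric, and using the compatibility hypothesis $\int_{L^{\mathbf{A}}_i(\bfX;\bfv)} x\,\mathrm{d}x = v_i b_i = \int_{L^{\mathbf{A}}_i(\bfY;\bfv)} x\,\mathrm{d}x$, all the mixed terms $-2\sum_i v_i\,x_i\cdot A_i b_i$ (resp.\ with $y_i$) and the purely quadratic-in-seed terms $\sum_i v_i\,x_i\cdot A_i x_i$ (resp.\ $y_i$) are identical on the two sides of each inequality. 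Writing $Q := \sum_i \int_{L^{\mathbf{A}}_i(\bfX;\bfv)} x\cdot A_i x\,\mathrm{d}x$ and $Q' := \sum_i \int_{L^{\mathbf{A}}_i(\bfY;\bfv)} x\cdot A_i x\,\mathrm{d}x$, the first inequality reduces to $Q' - Q \ge 0$ and the second to $Q - Q' \ge 0$. Hence $Q = Q'$, both inequalities are equalities, and in particular $S$ is an optimal transport map for $\mathcal{T}_{\mathbf{A}}(\bfX,\bfv)$.

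To finish, note that the source measure $\ldomega$ is absolutely continuous, so the optimal transport map for $\mathcal{T}_{\mathbf{A}}(\bfX,\bfv)$ is unique almost everywhere; this is part of the semi-discrete optimal transport theory of~\cite{MerigotThibertOT} that applies in the anisotropic setting, as already discussed in the proof of Theorem~\ref{thm:aniso} (anisotropic cells intersect in a set of Lebesgue measure zero). Therefore $S = T^{*}_{\bfX}$ almost everywhere, which forces $L^{\mathbf{A}}_i(\bfY;\bfv) = L^{\mathbf{A}}_i(\bfX;\bfv)$ up to a set of measure zero, and hence for every $i \in \{1,\ldots,n\}$, exactly as in Theorem~\ref{thm:uniqueness}.

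I expect the only genuine obstacle to be the second step: recognising that the verbatim isotropic argument breaks because $Q$ depends on the partition, and repairing it via the symmetric two-sided comparison so that the two error terms are exact negatives of one another. A minor additional point to state carefully is the a.e.-uniqueness of the optimal map for the seed-dependent quadratic cost $|x-x_i|^2_{A_i}$, but this is already covered by the parenthetical remark accompanying Theorem~\ref{thm:aniso}; everything else is routine bookkeeping.
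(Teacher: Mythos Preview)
Your proof is correct and follows the same strategy the paper indicates---building a cross transport map and invoking almost-everywhere uniqueness of the optimal map---but you have actually carried out the details, whereas the paper merely asserts in one line that the isotropic proof goes through verbatim after ``replacing Euclidean norms with anisotropic norms.'' Your observation that the quadratic term $Q=\sum_i\int_{L^{\mathbf{A}}_i}x\cdot A_ix\,\mathrm{d}x$ is partition-dependent (unlike $\int_\Omega|x|^2\,\mathrm{d}x$ in the isotropic case) is exactly right, and your symmetric two-sided comparison, yielding $Q'-Q\ge 0$ and $Q-Q'\ge 0$, is the natural way to recover equality of costs; the paper's sketch glosses over this point.
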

            \begin{proof}
            The proof is almost identical to the proof of Theorem \ref{thm:uniqueness} and so we do not repeat it here. It is simply a matter of replacing Euclidean norms with anisotropic norms.
            \end{proof}

\section{Conclusions and future directions}

In this paper we have given a complete theoretical answer to Goal \ref{goal: recovering}; we proved that a Laguerre tessellation is uniquely determined by the volumes and centroids of its cells and that it can be recovered using convex optimisation (Theorems \ref{thm:uniqueness} and \ref{cor:soln to Goal 1}). 
We performed some preliminary numerical experiments in Section \ref{sec: numerics - diagram exists} but there is room for improvement here. Since we used a gradient-based method to maximise $H$, and $\nabla H$ is only defined where the seeds are distinct, we introduced the extra constraints $c_{ij} \ge 0$, which destroyed the convexity of the constraint set. In future work it would be interesting to use non-smooth convex optimisation (such as a proximal method) to maximise $H$ on the ball and preserve the convexity of the problem. 

Goal \ref{goal: fitting} turned out to be more challenging and, for some data $\vB$, even to be ill-posed.
We start with the positive results.
If the target data $\vB$ satisfies the implicit assumption that the maximiser of $H$ on a ball is achieved at a point with distinct seeds (and under some other generic assumptions), then we found a complete theoretical answer to Goal \ref{goal: fitting}; local minimisers of the fitting error $f$ can be found by solving a convex optimisation problem (Theorem \ref{thm:main}). As for Goal \ref{goal: recovering}, our numerical simulations could be improved to exploit this convexity. Nevertheless, we still obtained good results for the materials science application in Section \ref{sec:EBSD}. 

The implicit assumption on the target data $\vB$, however, is restrictive. In Section \ref{subsec: larger perts} we gave numerical evidence that there exists data $\vB$ such that the maximiser of $H$ on a ball is achieved at a point in the set $\mathbb{R}^{nd} \setminus \distinct_n$ of non-distinct seeds, where $\nabla H$ is not defined. In this case we believe that the infimum of $f=|\nabla H|^2$ is not attained. The non-existence of a minimiser of $f$ on $\distinct_n$ can also be checked analytically for a very simple example with two seeds. To obtain a well-posed optimisation problem for all data $\vB$, in Sections \ref{sec:fitting} and \ref{sec:EBSD} we supplemented \eqref{eq:NLS} with the constraints $c_{ij} \ge 0$. An alternative approach could be to use a different objective function entirely. For example, we could drop the constraint that the volumes of the cells are fitted exactly, and replace the centroid error $f(\bfX)$ by a function of $(\bfX,\bfw)$ measuring a weighted sum of the volume error and the centroid error.

It is an open problem to characterise the data $\vB$ for which \eqref{eq:NLS} attains its infimum. The simulations in Section \ref{subsec: larger perts} suggest that this set could be quite a small neighbourhood of the data $\vB$ for which there exists a compatible diagram. It is also an open problem to find sufficient conditions on $\vB$ for there to exist a compatible diagram; in Section \ref{sec:existence} we only gave necessary conditions. We hope that this paper inspires further work on Goal \ref{goal: fitting}.

\paragraph{Acknowledgements.}
The authors thank Piet Kok, Filippo Santambrogio, Karo Sedighiani and Wil Spanjer for fruitful discussions. The EBSD image in Figure \ref{fig:EBSD} was provided by Tata Steel Europe. DPB acknowledges financial support from the EPSRC grant EP/V00204X/1 Mathematical Theory of Polycrystalline Materials. MP thanks the Centre for Doctoral Training in Mathematical Modelling, Analysis and Computation (MAC-MIGS), 
funded by EPSRC grant 
EP/S023291/1.  

\paragraph{Data Availability Statement.}
The code used in this paper is available online in a GitHub repository:
\url{https://github.com/DPBourne/Laguerre-Polycrystalline-Microstructures}
\cite{GitHub}.

\bibliographystyle{plain}
\bibliography{Bibliography}
        
\end{document}